\newtheorem{theorem}{Theorem}[section]
\newtheorem{lemma}[theorem]{Lemma}
\newtheorem{proposition}[theorem]{Proposition}
\newtheorem{corollary}[theorem]{Corollary}
\newtheorem{remark}[theorem]{Remark}
\numberwithin{equation}{section}
\newcommand{\R}{\mathbb{R}}
\newcommand{\Z}{\mathbb{Z}}
\newcommand{\Q}{\mathbb{Q}}
\renewcommand{\P}{\mathbb{P}}
\newcommand{\CP}{\mathbb{CP}}
\newcommand{\OO}{\mathcal{O}}
\renewcommand{\d}{\partial}
\newcommand{\dbar}{\overline{\partial}}
\newcommand{\e}{\varepsilon}
\DeclareMathOperator{\Ric}{Ric}
\DeclareMathOperator{\tr}{tr}
\DeclareMathOperator{\Kah}{Kah}
\DeclareMathOperator{\Vol}{Vol}
\DeclareMathOperator{\Real}{Re}
\DeclareMathOperator{\Diam}{Diam}
\DeclareMathOperator{\PSH}{PSH}
\newcommand{\oo}[1]{\overline{#1}}
\newcommand{\parabolic}[1]{\left(\frac{\d}{\d t} - \Delta_{#1}\right)}
\newcommand{\norm}[1]{\left\|#1\right\|}
\newcommand{\ip}[1]{\left\langle#1\right\rangle}
\newcommand{\comment}[1]{}
\title{Fano Fibrations and Twisted K\"ahler-Einstein Metrics II: The K\"ahler-Ricci Flow}
\author{Alexander Bednarek\\The University of Sydney\\\texttt{alexander.bednarek@sydney.edu.au}}
\date{\today}
\begin{document}

\maketitle

\begin{abstract}
\noindent This is the second of two papers studying both the geometric structure of Fano fibrations and the application to K\"ahler-Ricci flows developing a singularity in finite time. We assume that the K\"ahler-Ricci flow on a compact K\"ahler manifold has a rational initial metric and develops a singularity in finite time such that the manifold admits a Fano fibration structure. Moreover, it is assumed that the volume form of the flow collapses uniformly at the rate of $C^{-1}(T-t)^{n-m} \Omega \leq \omega(t)^n\leq C(T-t)^{n-m}\Omega$. Under this setting, a diameter bound is obtained in any compact set away from singular fibres and the diameter of the fibres is proven to collapse at the optimal rate $\sqrt{T-t}$. Furthermore, several precise $C^0$-estimates are proven for the potential of the complex Monge-Ampere flow which involve the potentials of singular twisted K\"ahler-Einstein metrics on the base variety from part I. Finally, in the case of K\"ahler-Einstein Fano fibres, we deduce Type I scalar curvature in any compact set away from singular fibres and globally for a submersion.
\end{abstract}

\section{Introduction}
The Analytic Minimal Model Program initiated by Song and Tian, \cite{ST07,ST12,BEG13,ZZ20,JST23}, conjectures that the K\"ahler-Ricci flow can be used as an analytic method to provide a classification for compact K\"ahler manifolds. In particular, the K\"ahler-Ricci flow is naturally expected to perform finitely many divisorial contractions, small contractions or flips until the given manifold becomes either a minimal model or Mori fiber space, that is, a Fano fibration over an analytic variety of a strictly smaller dimension, \cite{BEG13,JST23}.\\
In particular, if we take $(X,\omega_0)$ to be a compact K\"ahler manifold then there exists a $1$-parameter family of K\"ahler metrics $(\omega(t))_{t\in [0,T)}$ up to some maximal time $T \leq \infty$ that satisfies the following normalization of the K\"ahler-Ricci flow
\begin{equation*}
    \frac{\d \omega(t)}{\d t} = -\Ric (\omega(t)) - \omega(t), \quad \omega(0) = \omega_0.
\end{equation*}
If the canonical line bundle $K_X$ is ample then the flow exists for all time and, for any initial metric, will converge in the smooth topology to the unique K\"ahler-Einstein metric in the class $-2 \pi c_1(X)$, \cite{C85}. If $K_X$ is only nef then the flow develops a singularity at infinite time \cite{TZ06}. Here $X$ is a minimal model and, assuming the abundance conjecture holds, $K_X$ is semi-ample and for a sufficiently large $k \in \Z_{>0}$ generates a proper, holomorphic map with connected fibres 
\begin{equation*}
    f: X \to B \subseteq \P H^0(X,kK_X)
\end{equation*}
where $B$ is an irreducible, analytic variety and $f^* \OO(1) = K_X^{\otimes k}$. This is a Calabi-Yau fibration and it is known due the work of many, \cite{ST07,ST12,TWY18,T19,HLT24}, that the K\"ahler-Ricci flow converges in the smooth local topology away from singular fibres to a singular twisted K\"ahler-Einstein metric $\omega_{B}$ on $B$ which satisfies
\begin{equation*}
    \Ric \omega_B = -\omega_B + \omega_{WP}
\end{equation*}
on the regular part of the variety and excluding any critical points of the map $f$, and where $\omega_{WP}$ denotes the Weil-Petersson metric.\\
Finally, assume $K_X$ is not nef so that the flow encounters a singularity in a finite time, i.e. $T < \infty$, \cite{TZ06}. If there exists an ample $\Q$-line bundle $L$ such that $[\omega_0] = 2 \pi c_1(L)$, one can again generate a proper, holomorphic map with connected fibres
\begin{equation*}
    f: X \to B \subseteq \CP^N
\end{equation*}
to some irreducible analytic variety $B$ where the generic fibre is a Fano manifold. Let us denote $S = f^{-1}(S')$ where $S'$ constitutes the singular part of the variety $B$ and critical points of the map $f$. There are several cases to consider depending on the dimension $m = \dim B$ of the base analytic variety $B$, \cite{JST23, ZZ20}.\\
If $m = 0$ then the entire manifold $X$ is Fano and will become extinct, i.e. collapse to a point in finite time, and the Hamilton-Tian conjecture states that after performing a Type I rescaling in time, the flow will converge in the Gromov-Hausdorff topology to a $\Q$-Fano variety admitting a singular K\"ahler-Ricci soliton metric and, furthermore, in the smooth local topology on the regular part of the variety. This is confirmed in dimensions $\dim X \leq 3$ by Tian, Zhang, \cite{TZ16}, and in general dimension by the work of \cite{CW16,WZ21}. We also note that in this setting Perelman proved that the scalar curvature along the K\"ahler-Ricci flow is Type I \cite{ST08}.\\
If $0 < m < n$ then $X$ is expected to collapse onto the base of the fibration and the fibres of the maps should collapse to points \cite{JST23}. More precisely, the flow should converge in the smooth local topology away from singular fibres to the pullback of a singular K\"ahler metric $\omega_B$ on $B$. The generalized Hamilton-Tian conjecture states that the Type I blow-up rescaling of the restriction of the flow to any fibre should again converge in the Gromov-Hausdorff topology to a $\Q$-Fano variety admitting a singular K\"ahler-Ricci soliton metric and this convergence should extend to the smooth local topology on the regular part of the variety. This is the case we will consider for the remainder of the paper.\\
Finally, if $m = n$ then the flow is non-collapsed and expected to perform divisorial contractions, small contractions and flips and similarly converge in the smooth local topology to some singular K\"ahler metric outside of a subvariety \cite{JST23, ZZ20}. Additionally, in this case and the other cases mentioned, it is generally expected that the flow should globally converge in the Gromov-Hausdorff topology to a metric space whose structure is related to the singular K\"ahler metrics.\\

\noindent Now, in order to extract several important results for the case of interest, we take the following assumption on the collapsing of the volume form along the K\"ahler-Ricci flow. We assume that there exists constants $U_X > 0$ and, for any compact set $K \subseteq X \backslash S$, $C_K > 0$, such that
\begin{equation}\tag{VFC}
    C_K^{-1} (T-t)^{n-m} \Omega \leq \omega(t)^n \leq C_K(T-t)^{n-m} \Omega \quad \text{on } K \times [0,T),
\end{equation}
and
\begin{equation*}
    \omega(t)^n \leq U_X (T-t)^{n-m} \Omega \quad \text{on } X \times [0,T),
\end{equation*}
for some volume form $\Omega$ on $X$. This rate of collapsing is true for the volume $\Vol(X,\omega(t))$ along the flow and both the upper and lower collapsing behaviour is expected to be true, in general, globally on $X$. So we take the described, weaker condition and comment that it is useful to distinguish the quantities $C_K$ and $U_X$ for the sake of certain estimates.\\

\noindent We have the following result.

\begin{theorem}
Assume that the K\"ahler-Ricci flow satisfies (\ref{VFC}). For any compact set $K' \subseteq B\backslash S'$, there exists a positive constant $C = C(n,m,\omega_0,f^*\eta,T, K', C_{f^{-1}(K')})$ such that for all $(b,t) \in K' \times[0,T)$ we have
\begin{equation*}
     C^{-1} \sqrt{T-t} \leq \Diam(X_b, \omega(t)|_{X_b}) \leq C \sqrt{T-t}.
\end{equation*}
Moreover, for any path-connected, compact set $K \subseteq X \backslash S$, there exists a positive constant $C = C(n,m,\omega_0,f^*\eta,T,K,C_K)$ such that for all $t\in [0,T)$ we have
\begin{equation*}
    C^{-1}\Diam(f(K),\eta|_{f(K)}) \leq \Diam(K, \omega(t)|_K) \leq C,
\end{equation*}
where $\eta$ denotes a multiple of the Fubini-Study metric.
\end{theorem}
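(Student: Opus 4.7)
The strategy is to establish matched two-sided comparisons between $\omega(t)$ and the degenerating reference metric $\omega_{\mathrm{ref}}(t) := f^*\eta + (T-t)\omega_0$ on compact sets away from $S$, from which the diameter estimates follow by elementary geometry. First, rewrite the K\"ahler--Ricci flow as a parabolic complex Monge--Amp\`ere equation for a potential $\varphi(t)$ with reference $\omega_{\mathrm{ref}}(t)$. The VFC hypothesis, together with the $C^0$-estimates for $\varphi$ and $\dot\varphi$ established in Part~I, provides the analytic input needed below.

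The main technical step is the upper metric bound $\omega(t) \leq C\,\omega_{\mathrm{ref}}(t)$ on any compact $K \subseteq X\setminus S$, obtained by a parabolic Schwarz-lemma argument: apply the maximum principle to a quantity of the form $\log\tr_{\omega(t)}\omega_{\mathrm{ref}}(t) - A\varphi$, with $A$ chosen large enough that the bad curvature terms coming from $\omega_{\mathrm{ref}}(t)$ are absorbed by the good $-A\,\tr_{\omega(t)}\omega_{\mathrm{ref}}(t)$ term generated by differentiating $\varphi$ along the flow. The matching lower bound $\omega(t) \geq c\,\omega_{\mathrm{ref}}(t)$ then follows from an eigenvalue argument: since $(f^*\eta)^{m+1}=0$, the leading term of $\omega_{\mathrm{ref}}(t)^n$ is $\binom{n}{m}(T-t)^{n-m}(f^*\eta)^m\wedge\omega_0^{n-m}$, comparable to $(T-t)^{n-m}\Omega$ on $K$; combined with the VFC lower bound on $\omega(t)^n$, the eigenvalues $\lambda_1\leq\cdots\leq\lambda_n$ of $\omega(t)$ with respect to $\omega_{\mathrm{ref}}(t)$ satisfy $\lambda_n\leq C$ and $\prod_i\lambda_i\geq c>0$, forcing $\lambda_1\geq c/C^{n-1}$.

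Restricting the two-sided comparison to a fibre $X_b$ with $b\in K'$ kills the $f^*\eta$ term, giving $c(T-t)\omega_0|_{X_b}\leq \omega(t)|_{X_b}\leq C(T-t)\omega_0|_{X_b}$, and hence $c'\sqrt{T-t}\leq \Diam(X_b,\omega(t)|_{X_b})\leq C'\sqrt{T-t}$, using that $\Diam(X_b,\omega_0|_{X_b})$ is uniformly positive and finite for $b$ in the compact set $K'$. For the second assertion, $\omega(t)\leq C(f^*\eta+T\omega_0)$ is dominated by a fixed Riemannian metric on $K$, giving $\Diam(K,\omega(t)|_K)\leq C''$. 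The lower bound uses $\omega(t)\geq c\,f^*\eta$ together with the pullback-length identity $L_{f^*\eta}(\gamma)=L_\eta(f\circ\gamma)$: for any $p,q\in K$ and path $\gamma\subseteq K$ joining them, $L_{\omega(t)}(\gamma)\geq\sqrt{c}\,L_\eta(f\circ\gamma)\geq\sqrt{c}\,d_{\eta|_{f(K)}}(f(p),f(q))$, and taking the supremum yields the claimed lower bound.

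The hard part will be the parabolic Schwarz-lemma producing the upper metric bound: because $\omega_{\mathrm{ref}}(t)$ is only semi-positive and degenerates along the base direction, the classical Chern--Lu inequality does not close directly, and the maximum principle argument requires delicate use of the $C^0$-potential estimates from Part~I together with the upper half of the VFC to absorb the error terms. Once this is in hand, the eigenvalue trick and the geometric consequences are essentially formal.
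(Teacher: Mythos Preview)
Your overall architecture---prove a two-sided comparison $C^{-1}\omega_{\mathrm{ref}}(t)\le\omega(t)\le C\omega_{\mathrm{ref}}(t)$ on compacta of $X\setminus S$, then read off the diameter statements---is exactly what the paper does, and the last two paragraphs (restriction to fibres, pullback-length argument for the base) are fine. But the implementation of the Schwarz step has a genuine gap, and along the way you have swapped the roles of the two bounds.

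The swap: bounding $\tr_{\omega(t)}\omega_{\mathrm{ref}}(t)$ from above yields $\omega_{\mathrm{ref}}(t)\le C\omega(t)$, i.e.\ the \emph{lower} bound $\omega(t)\ge C^{-1}\omega_{\mathrm{ref}}(t)$, not the upper one. The eigenvalue argument then runs the other way: from $\lambda_i\ge c$ (the correctly interpreted Schwarz output) together with the \emph{upper} half of (VFC), which gives $\prod_i\lambda_i=\omega(t)^n/\omega_{\mathrm{ref}}(t)^n\le C$, one deduces $\lambda_n\le C/c^{n-1}$ and hence $\omega(t)\le C'\omega_{\mathrm{ref}}(t)$. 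With the labels exchanged this is precisely the paper's route.

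The real gap is the Schwarz step itself. A direct Chern--Lu inequality for $\log\tr_{\omega(t)}\omega_{\mathrm{ref}}(t)$ does not close, because the holomorphic bisectional curvature of $\omega_{\mathrm{ref}}(t)=f^*\eta+(T-t)\omega_0$ blows up like $(T-t)^{-1}$ in the fibre directions as $t\to T$; the constant $B$ in $(\partial_t-\Delta)\log\tr_{\omega(t)}\omega_{\mathrm{ref}}(t)\le B\,\tr_{\omega(t)}\omega_{\mathrm{ref}}(t)+1$ is therefore not uniform in $t$, and no fixed $A$ in your barrier $-A\varphi$ can absorb it. The paper sidesteps this by running two Schwarz lemmas against \emph{fixed} targets with bounded curvature: a global one for $f:(X,\omega(t))\to(\CP^N,\eta)$ giving $\tr_{\omega(t)}f^*\eta\le C$, and a local one for $\id:(X,\omega(t))\to(X,\omega_0)$ giving $\tr_{\omega(t)}\omega_0\le C/E(t)$ on $K$. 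The latter is localised with a cut-off $\Upsilon=e^{-C_1\chi^{-C_2}}$ and uses the barrier $A(\partial_t\varphi+\varphi)$ rather than $-A\varphi$: the evolution of $\partial_t\varphi+\varphi$ produces $-\lambda A\,\tr_{\omega(t)}\omega_0$ with a \emph{non-degenerating} coefficient, whereas $-A\varphi$ would only yield $-AE(t)\,\tr_{\omega(t)}\omega_0$ and again fail to close. Summing the two trace bounds then gives $\tr_{\omega(t)}\omega_{\mathrm{ref}}(t)\le C$. (Incidentally, the $C^0$-estimates for $\varphi$ and $\partial_t\varphi$ you invoke are proved in \S3 of the present paper under (VFC), not in Part~I.)
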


\noindent This is a natural extension of the work of Fong, \cite{F15}, who proves this result in the case where $f: X \to B$ is a submersion under the much stronger assumption that the Ricci curvature is uniformly bounded above, $\Ric \omega(t) \leq C \omega_0$. Furthermore, we note that recently Guo, Phong, Sturm, Song proved the upper bound $\Diam(X, \omega(t)) \leq C$ in full generality for finite time singularities of the K\"ahler-Ricci flow whose limiting cohomology class is big \cite{GPSS24}. Also, see \cite{FZ16,S21,SSW12,ZZ23} for more results concerning diameters.\\

\noindent In \cite{ZZ20}, Zhang and Zhang study the finite time singularities of the continuity method in the setting where $0 < m < n$ and construct a singular K\"ahler metric $\omega_B$ such that the continuity method converges to $f^*\omega_B$ in the $C^0_{loc}(X \backslash S)$-topology. It is natural to consider the convergence of the K\"ahler-Ricci flow to this same singular K\"ahler metric and that is the main motivation of this paper. In the first paper \cite{B25II} of this two part series, we constructed a $(1,1)$-form $\omega_{WP,\lambda}$ motivated by the Weil-Petersson, such that the singular metric $\omega_B$ of Zhang and Zhang satisfies 
\begin{equation*}
    \Ric \omega_B = -\omega_B - \lambda \eta + \omega_{WP,\lambda}
\end{equation*}
on $B \backslash S'$, where $\eta$ is a multiple of the Fubini-Study metric and $\lambda > 0$ is a constant. However, we also construct another singular K\"ahler metric, $\omega_B'$, that satisfies the following twisted K\"ahler-Einstein equation
\begin{equation*}
    \Ric \omega_B' = -\omega_B' + \omega_{WP,\lambda}
\end{equation*}
on $B \backslash S'$. This is another candidate for the convergence of the K\"ahler-Ricci flow. From \cite{ZZ20, B25II} we note these metrics, by definition, are equal to $\omega_B = \eta + i\d\dbar \rho_B$ and $\omega_B' = \frac{1}{1-e^{-T}} \eta + i\d\dbar \rho_B'$ for some functions $\rho_B, \rho_B' \in C^0(B) \cap C^{\infty}(B \backslash S')$. Additionally, the Fano fibration admits a $(1,1)$-form $\omega_{SPR}$ on $X \backslash S$, called the semi-prescribed Ricci curvature form, such that $\omega_{SPR} = \omega_0 + i\d\dbar \rho_{SPR}$ for some $\rho_{SPR} \in C^{\infty}(X \backslash S)$ is a K\"ahler metric on each smooth fibre with Ricci curvature prescribed by the initial metric.\\
Furthermore, it is of interest to consider the case where the fibration generically admits a smoothly varying of K\"ahler-Einstein metrics. To be clear, we assume there exists a $p > 1$ and $\rho_{SKE} \in C^{\infty}(X\backslash S)$ such that $e^{-\lambda\rho_{SKE}} \in L^p(X, \Omega)$ and $\rho_{SKE}$ can have a logarithmic pole of a small order from above along $S$. Additionally,
the semi K\"ahler-Einstein form $\omega_{SKE} = \omega_0 + i\d\dbar \rho_{SKE}$ is required to a K\"ahler-Einstein metric on each smooth fibre. Let us call this condition (SKE) \cite{B25II}.\\
We reduce the K\"ahler-Ricci flow to the following complex Monge-Ampere flow
\begin{equation*}
    \frac{\d \varphi}{\d t} = \log\frac{E(t)^{-(n-m)} \omega(t)^n}{\Omega} - \varphi, \quad \varphi(0) = 0,
\end{equation*}
where $\Omega$ is a volume form and $E(t) = \frac{e^{-t}-e^{-T}}{1-e^{-T}}$ for convenience. Then, given the data above, we have the following results.

\begin{theorem}
Assume the K\"ahler-Ricci flow satisfies (\ref{VFC}). For any compact set $K \subseteq X \backslash S$ there exists a real function $h_K = h_K(n,m,T,\omega_0,f^*\eta,\rho_{SPR},f^*\rho_B,U_X)$, where $h_K:[0,T) \to [0,\infty)$ is continuous, strictly decreasing and satisfies $h_K \to 0$ as $t \to T$ such that, on $K \times [0,T)$, we have
\begin{equation*}
    \lambda \inf_{X \backslash S} (\rho_{SPR} - f^*\rho_B) - h_K(t) \leq \varphi(t) - f^*\rho_B \leq h_K(t) + \lambda \sup_{X \backslash S} (\rho_{SPR} - f^*\rho_B).
\end{equation*}
Moreover, there exists a $h_K:[0,T) \to [0,\infty)$, where $h_K = h_K(n,m,T,\omega_0,f^*\eta,\rho_{SPR},f^*\rho'_B,U_X)$, such that on $K \times [0,T)$
\begin{equation*}
    \lambda \inf_{X \backslash S} \rho_{SPR} - h_K(t) \leq \varphi(t) - (1-e^{-T}) f^*\rho_B' \leq h_K(t) + \lambda \sup_{X \backslash S} \rho_{SPR}.
\end{equation*}
Furthermore, if (\ref{SKE}) holds then there exists a $h_K: [0,T) \to [0,\infty)$, where $h_K = h_K(n,m,T,\omega_0,f^*\eta,\rho_{SKE},f^*\rho_B, U_X)$, such that on $K \times [0,T)$
\begin{equation*}
    \lambda \inf_{X \backslash S} (- f^*\rho_B) - h_K(t) \leq \varphi(t) - f^*\rho_B \leq h_K(t) + \lambda \sup_{X \backslash S} (- f^*\rho_B).
\end{equation*}
Finally, if (\ref{SKE}) holds then there exists a $h_K: [0,T) \to [0,\infty)$, where $h_K = h_K(n,m,T,\omega_0,f^*\eta,\rho_{SKE},f^*\rho'_B, U_X)$, such that on $K \times [0,T)$
\begin{equation*}
     - h_K(t) \leq \varphi(t) - (1-e^{-T})f^*\rho'_B \leq h_K(t).
\end{equation*}
\end{theorem}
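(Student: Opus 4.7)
The plan is to adapt the parabolic maximum principle argument from the second assertion of the theorem, exploiting the additional structure that (SKE) provides. The reason we expect the cleaner conclusion (no residual $\sup$/$\inf$ correction) is that under (SKE) the fiber-wise K\"ahler--Einstein identity for $\omega_{SKE}|_{X_b}$ combines with the twisted K\"ahler--Einstein equation $\Ric\omega_B' = -\omega_B' + \omega_{WP,\lambda}$ on the base to produce an essentially exact identity for the Monge--Amp\`ere quotient $\omega(t)^n/(E(t)^{n-m}\Omega)$ when $\varphi$ is replaced by $(1-e^{-T})f^*\rho_B'$; the only surviving error comes from time-dependent terms that decay as $t \to T$.

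First I would substitute $\varphi = (1-e^{-T})f^*\rho_B' + \psi$ into the complex Monge--Amp\`ere flow. Using $\omega_B' = \frac{1}{1-e^{-T}}\eta + i\d\dbar \rho_B'$ we may rewrite
\[
\omega(t) = \omega_0 - f^*\eta + (1-e^{-T})f^*\omega_B' + i\d\dbar \psi,
\]
so that the reference form has the correct horizontal piece $(1-e^{-T})f^*\omega_B'$ on $B\backslash S'$ and picks up its fiber-wise positivity from $\omega_0$. The flow then becomes an equation for $\psi$ with driving term $\log[E(t)^{-(n-m)}\omega(t)^n/\Omega] - \psi - (1-e^{-T})f^*\rho_B'$.

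Next I would apply the parabolic maximum principle on $K \times [0,t_0]$ for $t_0 < T$, to the test functions $\psi \mp h(t)$. At an interior spatial-temporal extremum one has $i\d\dbar\psi \leq 0$ (resp.\ $\geq 0$), which bounds $\omega(t)$ by the reference form. To evaluate the resulting $n$-th power one expands $\omega(t)^n$ along the fibration: the horizontal $m$-form is controlled by $[(1-e^{-T})f^*\omega_B']^m$, which by Part I of this series \cite{B25II} satisfies an explicit Monge--Amp\`ere identity on $B$ involving $e^{\rho_B'}$ and a fiber-volume density; the vertical $(n-m)$-form is controlled by $\omega_{SKE}^{n-m}|_{X_b}$ after writing $\omega_0 = \omega_{SKE} - i\d\dbar\rho_{SKE}$ and invoking (SKE). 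By the fiber K\"ahler--Einstein identity $\omega_{SKE}^{n-m}|_{X_b}$ is proportional to $e^{-\lambda\rho_{SKE}}$ times a base density that precisely matches the Weil--Petersson weighting appearing in the base equation for $\rho_B'$. The cancellation of these two weightings is exactly what kills the $\lambda\sup$/$\inf$-correction present in the earlier parts of the theorem.

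What remains is a purely time-dependent error arising from (i) the discrepancy between $(1-e^{-T})$ and $E(t)^{-1}(e^{-t}-e^{-T})$ prefactors inside the logarithm, (ii) the time-independence of the barrier $(1-e^{-T})f^*\rho_B'$, and (iii) the Grönwall-type integration of the correction along $[0,T)$. Choosing $h(t)$ to solve the resulting ODE $h'(t) + h(t) = (\text{error})(t)$ with $h(0)$ large enough to dominate the initial data, one obtains the desired bound; integrability of the error (which is controlled by $E(t)$-powers and by the hypothesis $e^{-\lambda\rho_{SKE}} \in L^p$ from (SKE)) ensures $h_K(t) \to 0$. The main obstacle will be verifying that the maximum of $\psi \mp h(t)$ is actually attained in the interior of $K \times [0,t_0]$ (rather than on $\d K$) — this will require combining the global upper bound $U_X$ from (VFC) with the previous estimates of Theorem~1.2 to control $\psi$ on a slightly larger compact neighbourhood of $K$ in $X\backslash S$, and then shrinking back to $K$; the logarithmic-pole-from-above condition on $\rho_{SKE}$ is exactly what makes this cutoff scheme compatible with the (SKE) integrability.
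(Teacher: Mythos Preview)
Your understanding of the \emph{algebraic} cancellation under (SKE) is correct: once one uses the identity
\[
{n\choose m}\,\omega_{SKE}^{n-m}\wedge\bigl((1-e^{-T})f^*\omega_B'\bigr)^m \;=\; e^{\,f^*\rho_B'-\lambda\rho_{SKE}}\,\Omega
\]
from Part~I, the $\lambda\rho_{SKE}$ term produced by differentiating the auxiliary function is exactly absorbed, which is why no residual $\lambda\sup/\lambda\inf$ correction survives. That part of your sketch is right.

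The gap is in the maximum principle step. You propose to apply the parabolic maximum principle on $K\times[0,t_0]$ and acknowledge that the extremum may land on $\partial K$; your fix is to invoke the earlier assertions of the same theorem on a slightly larger compact set and then shrink. But those earlier assertions only give $|\psi|\le h_{K'}(t)+\lambda\sup_{X\setminus S}|\rho_{SPR}|$ on $K'\supset K$, a bound that does \emph{not} decay to zero as $t\to T$. So on $\partial K$ you have no better control than a fixed constant, and the argument cannot produce an $h_K(t)\to 0$. There is no bootstrap available here: any local maximum principle on $K$ is hostage to boundary data you do not control.

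The paper avoids this entirely by running the maximum principle \emph{globally} on $X$. One picks finitely many ample divisors $D_j$ on $B$ with $\bigcap_j D_j=S'$, defining sections $s_j$, and Hermitian metrics with $-i\partial\bar\partial\log|s_j|^2_{h_{D_j}}=\mu_j(1-e^{-T})\omega_B'$. The test functions are
\[
\varphi^\pm_\varepsilon(t)=\varphi(t)-E(t)\,\rho_{SKE,\varepsilon}-(1-E(t))(1-e^{-T})f^*\rho_B'\pm\varepsilon\log|s_j|^2_{h_{D_j}},
\]
where $\rho_{SKE,\varepsilon}$ is $\rho_{SKE}$ cut off outside a tubular neighbourhood of $D_j$. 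The barrier $\varepsilon\log|s_j|^2\to-\infty$ along $D_j$ forces the extremum of $\varphi^\pm_\varepsilon$ either into the region where $\omega_{SKE,\varepsilon}=\omega_{SKE}$ (and one computes directly using the identity above), or into the tube $f^{-1}(B_\eta(D_j,r_\varepsilon))$, where the global $C^0$ bound on $\varphi$ (this is where $U_X$ enters) makes the barrier term dominate. The function $h_K$ is then obtained by inverting $\varepsilon\mapsto T_\varepsilon$, where $T_\varepsilon$ is the time after which the mixed terms in the binomial expansion of $\bigl(E(t)\omega_{SKE}+(1-E(t)\pm\mu_j\varepsilon)(1-e^{-T})f^*\omega_B'\bigr)^n$ are $\le\varepsilon$ on the complement of the tube. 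Your ODE/Gr\"onwall scheme for $h$ does not appear; the decay of $h_K$ comes from the monotonicity of $T_\varepsilon\to T$ as $\varepsilon\to 0$.
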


\noindent This is an extension of the result \cite[Prop. 5.4]{ST12} of Song and Tian to the case of finite time singularities. For instance, if $\rho_{SPR} = 0$ on $X \backslash S$ then one can say $\omega(t) \to (1-e^{-T})f^*\omega_B'$ in the $C^0_{loc}(X\backslash S)$-sense of potentials. However, $\rho_{SPR} = 0$ is quite strong as it implies $\omega_{SPR} = \omega_{0}$ and thus the initial metric is K\"ahler-Einstein along fibres. Hence, it is more natural to take the assumption (\ref{SKE}). One can be hopeful then that, assuming (\ref{SKE}) holds, we have $\omega(t) \to (1-e^{-T}) f^*\omega_B'$ in the $C^\infty_{loc}(X \backslash S)$-topology.\\

\noindent Additionally, when $B$ is a smooth K\"ahler manifold and the map $f:X \to B$ is a submersion, the dependence on the volume form collapsing condition, (\ref{VFC}), can be removed and the convergence rate $h_X(t)$ improved to $(T-t)$. If we also assume some conditions on $\omega_{SPR}$, $\omega_0$, $f^*\omega_B$, $f^*\eta$, we can deduce convergence in the $C^0(X)$-topology of $\omega(t)$ the twisted K\"ahler-Einstein metrics in the sense of their potentials.

\begin{theorem}
Assume that the map $f: X \to B$ is a submersion. If $\omega_{SPR} - \omega_0 + f^*\omega_B - f^*\eta = 0$ on $X$, then there exists a constant $C = C(n,m,T,\omega_0,f^*\eta,\rho_{SPR},f^*\rho_B) > 0$ such that on $X \times [0,T)$ we have
\begin{equation*}
    \left|\varphi(t) - f^*\rho_B\right| \leq C(T-t).
\end{equation*}
If $\omega_{SPR} = \omega_{0}$ on $X$, then there exists a constant $C = C(n,m,T,\omega_0,f^*\eta, \rho_{SPR}, f^*\rho_B') > 0$ such that on $X \times [0,T)$ we have
\begin{equation*}
    \left|\varphi(t) - (1-e^{-T}) f^*\rho_B'\right| \leq C(T-t).
\end{equation*}
If (\ref{SKE}) holds and $f^*\omega_{B} = f^*\eta$ on $X$, then there exists a positive constant $C = C(n,m,T,\omega_0,f^*\eta,\rho_{SKE}, f^*\rho_B) > 0$ such that on $X \times [0,T)$ we have
\begin{equation*}
    \left|\varphi(t) - f^*\rho_B\right| \leq C(T-t).
\end{equation*}
If (\ref{SKE}) holds then there exists a constant $C = C(n,m,T,\omega_0,f^*\eta, \rho_{SKE},f^*\rho_B') > 0$ such that on $X \times [0,T)$ we have
\begin{equation*}
    \left|\varphi(t) - (1-e^{-T}) f^*\rho_B'\right| \leq C(T-t).
\end{equation*}
\end{theorem}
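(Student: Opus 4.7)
My plan is to apply the parabolic maximum principle to the barriers $\psi^{\pm} := \varphi - \Phi \mp A(T-t)$, where $\Phi$ denotes the candidate limit potential in each of the four cases and $A > 0$ will be chosen large. Because $f: X \to B$ is a submersion onto a smooth K\"ahler base, $S$ is empty, the potentials $\rho_{SPR}, \rho_{SKE}, f^*\rho_B, f^*\rho_B'$ are globally smooth on $X$, and the maximum principle applies without cut-off functions; this also explains why (\ref{VFC}) drops out of the hypotheses.

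The first step is to rewrite the complex Monge-Amp\`ere flow as
\begin{equation*}
\frac{\d}{\d t}(\varphi - \Phi) = \log\frac{\omega(t)^n}{\omega_\Phi^n} + G_\Phi(t) - (\varphi - \Phi), \qquad G_\Phi(t) := \log\frac{E(t)^{-(n-m)}\omega_\Phi^n}{\Omega} - \Phi,
\end{equation*}
where $\omega_\Phi := \omega_0 + i\d\dbar\Phi$. The key step is to show that, under the geometric hypothesis of each case, $G_\Phi(t)$ is spatially constant with $|G_\Phi(t)| \leq C(T-t)$. Granted this, at a would-be positive spatial maximum of $\psi^+$ attained at some $(x_0, t_0)$ with $t_0 > 0$, the conditions $\d_t\psi^+ \geq 0$ and $i\d\dbar\psi^+ \leq 0$ give $\omega(t_0) \leq \omega_\Phi$ at $x_0$ and hence $\log(\omega^n/\omega_\Phi^n)(x_0,t_0) \leq 0$, leading to $\psi^+(x_0,t_0) + A \leq G_\Phi(t_0) \leq CT$. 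Choosing $A$ large enough (depending on $C$, $T$, and $\min_X \Phi$) produces a contradiction, forcing $\psi^+ \leq 0$; the symmetric argument for $\psi^-$ yields the lower bound.

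The main obstacle is the case-by-case computation of $G_\Phi(t)$. The expansion $E(t) = \frac{e^{-T}}{1-e^{-T}}(T-t) + O((T-t)^2)$ produces a leading $-(n-m)\log(T-t)$ divergence which must exactly cancel the corresponding adiabatic collapse divergence in $\log(\omega_\Phi^n/\Omega)$. The four hypotheses are tailored precisely for this cancellation: in each case $\omega_\Phi^n$ factors as the product of a fibre volume form, controlled either by the semi-prescribed Ricci equation defining $\omega_{SPR}$ or by the K\"ahler-Einstein fibre property of $\omega_{SKE}$, and a base volume form, controlled by the $i\d\dbar$-equations satisfied by $\rho_B$ or $\rho_B'$, which are in turn the content of the twisted K\"ahler-Einstein equations $\Ric\omega_B = -\omega_B - \lambda\eta + \omega_{WP,\lambda}$ and $\Ric\omega_B' = -\omega_B' + \omega_{WP,\lambda}$ established in Part I. The cases involving $\omega_{SPR}$ are the most delicate, since the Ricci prescription determines $\Phi$ only implicitly through an integrated equation, whereas in the $\omega_{SKE}$ cases $\Phi$ appears directly in the expression for $\log\omega_{SKE}^n$, and the additional hypotheses $f^*\omega_B = f^*\eta$ or $\omega_{SPR} = \omega_0$ trivialize either the base or the fibre contribution outright.
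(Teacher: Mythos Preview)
There is a genuine gap in your proposal: the comparison form $\omega_\Phi := \omega_0 + i\d\dbar\Phi$ is \emph{time-independent}, so the quantity
\[
G_\Phi(t) = -(n-m)\log E(t) + \log\frac{\omega_\Phi^n}{\Omega} - \Phi
\]
is the sum of a fixed smooth function on $X$ and $-(n-m)\log E(t)$, which diverges to $+\infty$ like $(n-m)|\log(T-t)|$ as $t \to T$. There is no ``adiabatic collapse divergence in $\log(\omega_\Phi^n/\Omega)$'' available to cancel this, because $\omega_\Phi^n$ does not depend on $t$; your claimed bound $|G_\Phi(t)| \le C(T-t)$ is simply false. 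A second, related gap is in the maximum principle step: since $\omega(t) = \omega_{REF}(t) + i\d\dbar\varphi$ with $\omega_{REF}(t) = E(t)\omega_0 + (1-E(t))f^*\eta$, the condition $i\d\dbar\psi^+ \le 0$ only gives $\omega(t) \le \omega_\Phi + (\omega_{REF}(t)-\omega_0)$, not $\omega(t) \le \omega_\Phi$.

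The paper's argument repairs both defects by replacing your fixed reference by a \emph{time-dependent} one. One works with the auxiliary function $\varphi - E(t)\rho_{SPR} - (1-E(t))f^*\rho_B$ (and its analogues), so that
\[
\omega(t) = E(t)\,\omega_{SPR} + (1-E(t))\,f^*\omega_B + i\d\dbar\bigl(\varphi - E(t)\rho_{SPR} - (1-E(t))f^*\rho_B\bigr).
\]
The reference $E(t)\omega_{SPR} + (1-E(t))f^*\omega_B$ now matches $\omega_{REF}(t)$, and its top power equals $E(t)^{n-m}\binom{n}{m}\omega_{SPR}^{n-m}\wedge f^*\omega_B^m$ plus a remainder of size $O(E(t))\cdot\binom{n}{m}\omega_{SPR}^{n-m}\wedge f^*\omega_B^m$; together with the identity $\binom{n}{m}\omega_{SPR}^{n-m}\wedge f^*\omega_B^m = e^{f^*\rho_B}\Omega$ this produces exactly the $E(t)^{n-m}$ collapse that cancels the prefactor in the Monge--Amp\`ere flow. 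In the submersion case the remainder is uniformly $O(E(t))$ on all of $X$, which is what forces the rate $h_X(t) \le CE(t) \sim C(T-t)$. The extra hypotheses in each case then kill the residual term $\lambda(\rho_{SPR}-f^*\rho_B)$ (respectively $\lambda\rho_{SPR}$, $-\lambda f^*\rho_B$, $0$), rather than making $G_\Phi$ spatially constant.
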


\noindent Finally, in regards to the curvature behaviour encountered by finite time singularities, little is known concerning the Ricci and Riemannian curvatures, \cite{B25I,CHM25,MT25}, but there are several results concerning the scalar curvature which we now mention.\\
It was previously known due to Zhang, \cite{Z10}, that in this setting where $0 < m < n$, the K\"ahler-Ricci flow has scalar curvature bounded on $X \times [0,T)$ by
\begin{equation*}
    R_{\omega(t)} \leq \frac{C}{(T-t)^2},
\end{equation*}
and due to Perelman, \cite{ST08}, that for Fano manifolds, i.e. $m = 0$, the K\"ahler-Ricci flow has Type I scalar curvature
\begin{equation*}
    R_{\omega(t)} \leq \frac{C}{T-t}
\end{equation*}
on $X \times [0,T)$. Furthermore, a more technical result of Hallgren, Jian, Song, Tian, \cite{JST23,HJST23}, states that for all $(x,t) \in X \times [0,T)$,
\begin{equation*}
    (T-t)R(x,t) \leq C\left(1 + \frac{d^2_{\omega(t)}(x,p(t))}{T-t}\right)
\end{equation*}
where $p(t)$ is a point at time $t$ defined to be minimizing a particular function on $X$.\\

\noindent We present the following improvement and extension of these above results.

\begin{theorem}
Assume that the K\"ahler-Ricci flow satisfies (\ref{VFC}). If $\rho_{SPR} = 0$ on $X \backslash S$ or (\ref{SKE}) holds, then for any compact set $K \subseteq X \backslash S$ where $h_K$ is Lipschitz, there exist a constant $C = C(n,m,T,\omega_0,f^*\eta,\rho_{SPR},f^*\rho_B',K,C_K,U_X) > 0$ or a constant $C = C(n,m,T,\omega_0,f^*\eta,\rho_{SKE},f^*\rho_B',K,C_K,U_X) > 0$, respectively, such that on $K \times [0,T)$,
\begin{equation*}
    R_{\omega(t)} \leq \frac{C}{T-t}.
\end{equation*}
On the other hand, assume that $f:X \to B$ is a submersion and the K\"ahler-Ricci flow satisfies (\ref{VFC}). If $\omega_{SPR} = \omega_{0}$ on $X$ or if (\ref{SKE}) holds, then there exists a $C = C(n,m,T,\omega_0, f^*\eta,\rho_{SPR},f^*\rho_B',C_X) > 0$ or $C = C(n,m,T,\omega_0,f^*\eta,\rho_{SKE},f^*\rho_B',C_X) > 0$, respectively, such that on $X \times [0,T)$ we have
\begin{equation*}
    R_{\omega(t)} \leq \frac{C}{T-t}.
\end{equation*}
\end{theorem}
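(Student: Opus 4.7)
The plan is to convert the bound on $R_{\omega(t)}$ into a bound on time-derivatives of the Monge-Amp\`ere potential $\varphi$, and then to control those derivatives using the volume-form collapsing condition and the $C^0$-estimates of Theorem 1.2. Differentiating the complex Monge-Amp\`ere flow once in time and substituting $\partial_t\log\omega(t)^n = -R_{\omega(t)} - n$ from the K\"ahler-Ricci flow gives
\begin{equation*}
R_{\omega(t)} = -\dot\varphi - \ddot\varphi - n + (n-m)\frac{e^{-t}}{e^{-t}-e^{-T}},
\end{equation*}
where dots denote $\partial/\partial t$. Since the last term is automatically bounded above by $C/(T-t)$, the required Type I bound reduces to establishing the lower bound $\dot\varphi + \ddot\varphi \geq -C/(T-t)$.

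The bound on $\dot\varphi$ is the easy half. Writing $\dot\varphi + \varphi = \log\bigl(E(t)^{-(n-m)}\omega(t)^n/\Omega\bigr)$ and using $(T-t)/E(t) \to e^T-1$, the condition (\ref{VFC}) gives $|\dot\varphi + \varphi| \leq C$ on $K\times[0,T)$; combined with the $C^0$-bound on $\varphi$ from Theorem 1.2 (valid under $\rho_{SPR}=0$ or (\ref{SKE})), this yields $|\dot\varphi|\leq C$ on $K$.

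The heart of the argument is a lower bound on $\ddot\varphi$. I would introduce $\tilde\varphi := \varphi - (1-e^{-T})f^*\rho_B'$, which by Theorem 1.2 satisfies $|\tilde\varphi| \leq h_K(t)$ on $K\times[0,T)$; the Lipschitz hypothesis together with $h_K(T)=0$ then upgrades this to $|\tilde\varphi| \leq L(T-t)$ on $K$. The plan is to apply the maximum principle to an auxiliary quantity of the form
\begin{equation*}
Q = (T-t)(\dot\varphi + \varphi) + A\tilde\varphi + B(T-t),
\end{equation*}
for constants $A, B$ to be tuned. Using the identity
\begin{equation*}
\parabolic{\omega(t)}(\dot\varphi+\varphi) = -n + (n-m)\frac{e^{-t}}{e^{-t}-e^{-T}} - \tr_{\omega(t)}\Ric(\Omega),
\end{equation*}
derived from $\Delta_{\omega(t)}\log(\omega(t)^n/\Omega) = -R_{\omega(t)} + \tr_{\omega(t)}\Ric(\Omega)$, one computes $\parabolic{\omega(t)} Q$ and picks $A, B$ so that the leading $1/(T-t)$ contribution and the trace term are cancelled, exploiting the Part I identities that relate $\Ric(\Omega)$ along fibres to $\omega_{SPR}$ or $\omega_{SKE}$ and the pullback $f^*\omega_B'$. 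As $K$ has boundary in $X\setminus S$, I would actually run the argument on a nested compact exhaustion of $X\setminus S$, using the existing $C^0$-estimates of Theorem 1.2 to handle boundary contributions, obtaining $\ddot\varphi \geq -C/(T-t)$ on $K$ and hence the scalar curvature bound.

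The principal obstacle is precisely the term $\tr_{\omega(t)}\Ric(\Omega)$: because $\omega(t)$ collapses along fibres, traces of generic fixed $(1,1)$-forms blow up, and only the Part I choice of reference volume form together with the hypothesis $\rho_{SPR}=0$ or (\ref{SKE}) produces the cancellation needed for the maximum-principle step to close. For the submersion case, Theorem 1.3 supplies the stronger global linear bound $|\tilde\varphi|\leq C(T-t)$ on $X\times[0,T)$ and, since there are no singular fibres, the same auxiliary-function argument runs on all of $X$ with no exhaustion or boundary issues, giving $R_{\omega(t)} \leq C/(T-t)$ globally.
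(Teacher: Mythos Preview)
Your reduction of the Type I bound to $\dot\varphi+\ddot\varphi\geq -C/(T-t)$ is correct, and so is the observation that $|\dot\varphi|\leq C$ on $K$. The gap is in the proposed lower bound for $\ddot\varphi$. Your auxiliary quantity $Q=(T-t)(\dot\varphi+\varphi)+A\tilde\varphi+B(T-t)$ contains only first-order time information; the parabolic maximum principle applied to $Q$ controls $Q$ itself, not $\partial_tQ$. A bound on $Q$ would give back only $|\dot\varphi+\varphi|\leq C/(T-t)$, which is weaker than what (\ref{VFC}) already supplies, and says nothing about $\ddot\varphi$ at a generic point. Put differently, $\dot\varphi+\varphi=\log\bigl(E(t)^{-(n-m)}\omega(t)^n/\Omega\bigr)$ is bounded by (\ref{VFC}), but a bounded function can have time-derivative oscillating wildly as $t\to T$; there is no mechanism in your argument to rule this out. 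The vague appeal to cancellations between $\tr_{\omega(t)}\Ric(\Omega)$ and the Part~I data does not repair this, because even with a clean evolution equation for $Q$ you are still only bounding the wrong quantity.

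The paper avoids the second time-derivative entirely. It uses instead the spatial formula
\[
(1-e^{t-T})R_{\omega(t)}=e^{t-T}n-(1-e^{-T})\tr_{\omega(t)}f^*\omega_B'+\Delta_{\omega(t)}v,
\]
with $v=2AE(t)-\bigl(u-(1-e^{-T})f^*\rho_B'\bigr)$, so that the problem becomes bounding $\Delta_{\omega(t)}v$ from above. This is done by a Li--Yau/Perelman type argument: first a maximum-principle estimate on $|\nabla v|^2/v+(1-e^{-T})\tr_{\omega(t)}f^*\omega_B'$ (localized by a cutoff $\Upsilon=e^{-C_1\chi^{-C_2}}$) gives $|\nabla v|^2/v\leq C$, and then a second maximum-principle estimate on a combination involving $(1-e^{t-T})\Delta v/v$ and $|\nabla v|^2/v$, using $n|\nabla\bar\nabla v|^2\geq(\Delta v)^2$, yields $\Delta_{\omega(t)}v\leq C$. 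The role of the hypothesis $\rho_{SPR}=0$ or (\ref{SKE}), together with the Lipschitz bound on $h_K$, is to guarantee $AE(t)\leq v\leq 3AE(t)$, which is what makes the Li--Yau quotients $|\nabla v|^2/v$ and $\Delta v/v$ meaningful. Localization is handled throughout by the cutoff $\Upsilon$, not by an exhaustion with boundary terms.
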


\noindent \textbf{Acknowledgements.} \textit{The author's gratitude goes to his supervisors Zhou Zhang and Haotian Wu for all their support. The interests of and discussions with Tiernan Cartwright and Xiaohua Zhu are also extremely appreciated. The author would like to thank Wangjian Jian, Zhenlei Zhang and Gang Tian, for extending an invitation to visit BICMR and for many discussions, during which the idea for this paper was conceived. The research visit was supported by the Chinese Academy of Sciences and the Beijing International Center for Mathematical Research.}

\section{The K\"ahler-Ricci Flow}

Let $(X,\omega_0)$ be a compact K\"ahler manifold of complex dimension $n = \dim X$. We consider the following normalization of the K\"ahler-Ricci flow,
\begin{equation}\label{KRF}\tag{KRF}
    \frac{\d \omega(t)}{\d t} = -\Ric(\omega(t)) - \omega(t), \quad \omega(0) = \omega_0.
\end{equation}
The (\ref{KRF}) exists for a maximal time $T \in (0,\infty]$ determined by the following cohomological condition
\begin{equation*}
    T = \sup \{ t \in [0,\infty): e^{-t} [\omega_0] - (1-e^{-t}) 2\pi c_1(X) \in \Kah(X)\},
\end{equation*}
where $\Kah(X)$ denotes the K\"ahler cone in $H^{1,1}(X,\R)$, \cite{TZ06}.\\
We assume $T < \infty$ and the initial metric is rational, i.e. $[\omega_0] \in H^{1,1}(X,\Q)$ and there exists an ample $\Q$-line bundle $L$ such that $2\pi c_1(L) = [\omega_0]$. By Kawamata's rationality theorem, 
\begin{equation*}
    D := e^{-T} L + (1-e^{-T}) K_X
\end{equation*}
is a rational line bundle, i.e. $e^{-T} \in \Q$, and Kawamata's basepoint free theorem implies that $D$ is a semi-ample line bundle. That is, for a sufficiently large $k \in \Z_{>0}$, $kD$ generates a proper, holomorphic, surjective map with connected fibres
\begin{equation*}
    f: X \to B \subseteq \P H^0(X,kD)
\end{equation*}
where $B = f(X)$ denotes the image of $f$ and is an irreducible, normal, analytic variety. Additionally, $f^* \OO_B(1) = kD$ so that if we set $\eta = \frac{1}{k}\omega_{FS}|_{B}$, where $\omega_{FS}$ is the Fubini-Study metric on $\CP^N$ and $k$ is fixed to be the smallest $k \in \Z_{>0}$ which generates the map $f$ and satisfies $k' = k(1-e^{-T}) \in \Z_{>0}$, then
\begin{equation*}
[\omega(T)] = 2\pi c_1(D) = \frac{2\pi}{k} f^*c_1( \OO_B(1)) = [f^*\eta],
\end{equation*}
where we take the $[\omega(T)]$ to be the limiting class of the flow, $e^{-T} [\omega_0] - (1-e^{-T})2\pi c_1(X)$.\\
Let us denote the singular set of $B$ together with the critical points of $f$ by $S'$ and set $S = f^{-1}(S')$. Thus, $f: X \backslash S \to B \backslash S'$ is a submersion and $X_b = f^{-1}(b)$ is a K\"ahler manifold equipped with K\"ahler metric $\omega_{0,b} = \omega_0|_{X_b}$ for all $b \in B \backslash S'$. By restricting the cohomological equation $[f^*\eta] = [\omega(T)]$ to any non-singular fibre $X_b$, we see that the fibres are also Fano, i.e. $2\pi c_1(X_b) = \lambda [\omega_{0,b}]$ where $\lambda =\frac{e^{-T}}{1-e^{-T}} > 0$ is a positive constant. Finally, we let the Iitaka dimension be denoted by $m = \dim B$ and we assume that the fibres are of intermediate dimension, i.e. $0 < m < n$.\\

\noindent Let us reduce the K\"ahler-Ricci flow to a complex Monge-Ampere flow. As the cohomology class $[\omega(t)]$ of the K\"ahler-Ricci flow evolves according to
\begin{equation*}
    [\omega(t)] = \frac{e^{-t}-e^{-T}}{1-e^{-T}}[\omega_0] + \frac{1-e^{-t}}{1-e^{-T}} [\omega(T)],
\end{equation*}
we consider the reference form $\omega_{REF}(t) \in [\omega(t)]$ defined by
\begin{equation*}
    \omega_{REF}(t) = \frac{e^{-t}-e^{-T}}{1-e^{-T}} \omega_0 + \frac{1-e^{-t}}{1-e^{-T}} f^* \eta.
\end{equation*}
Note $\omega_{REF}(t)$ is in fact a K\"ahler metric for every $t \in [0,T)$. By the $\d\dbar$-lemma there exists a smooth function $\varphi: X \times [0,T) \to \R$ such that
\begin{equation*}
    \omega(t) = \omega_{REF}(t) + i\d\dbar \varphi(t).
\end{equation*}
Moreover, as $[f^*\eta] = [\omega(T)] = e^{-T}[\omega_0] - (1-e^{-T}) 2\pi c_1(X)$, we have
\begin{equation*}
    \chi := \lambda \omega_0 -\frac{1}{1-e^{-T}} f^*\eta\in 2\pi c_1(X)
\end{equation*}
so that we can choose a volume form $\Omega$ on $X$ such that $\chi = \Ric \Omega$. We fix $\Omega$ to be unique by imposing
\begin{equation*}
    \int_X \Omega = \int_X {n\choose m} \omega_0^{n-m} \wedge f^*\eta^m.
\end{equation*}
It follows that the (\ref{KRF}) reduces to the following complex Monge-Ampere flow
\begin{equation}\label{CMA}\tag{CMA}
    \frac{\d \varphi}{\d t} = \log\frac{E(t)^{-(n-m)}\omega(t)^n}{\Omega}-\varphi, \quad \varphi(0) = 0,
\end{equation}
where we set $E(t) = \frac{e^{-t}-e^{-T}}{1-e^{-T}}$ for convenience. We define
\begin{equation*}
u = (1-e^{t-T}) \frac{\d \varphi}{\d t} + \varphi
\end{equation*}
to be the Ricci potential since one can show, \cite{Z10}, the Ricci curvature decomposes in terms of the Ricci potential according to
\begin{equation*}
    (1-e^{t-T})\Ric (\omega(t)) = e^{t-T}\omega(t) - f^*\eta - i\d\dbar  u.
\end{equation*}
Additionally, the scalar curvature is given by
\begin{equation}\label{scalar_curvature_formula}
    (1-e^{t-T}) R_{\omega(t)} = e^{t-T}n - \tr_{\omega(t)} f^*\eta - \Delta_{\omega(t)} u.
\end{equation}

\section{Consequences of Uniform Volume Form Collapsing}

\noindent First, an observation. 

\begin{proposition}
There exists a constant $C = C(n,m,\omega_0,f^*\eta, T)> 0$ such that for all $t \in [0,T)$,
\begin{equation*}
    C^{-1}E(t)^{n-m} \leq \Vol(X, \omega(t)) \leq CE(t)^{n-m}.
\end{equation*}
In fact,
\begin{equation*}
    \lim_{t \to T} E(t)^{-(n-m)} \Vol(X, \omega(t)) = \Vol(X, \Omega_{\omega_0,\eta}),
\end{equation*}
where we use the notation
\begin{equation*}
    \Omega_{\omega_0,\eta} = {n \choose m} \omega_0^{n-m} \wedge f^*\eta^m.
\end{equation*}
\end{proposition}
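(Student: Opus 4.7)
The plan is to compute $\Vol(X,\omega(t)) = \int_X \omega(t)^n$ purely at the level of cohomology: since $\omega(t)^n$ and $\omega_{REF}(t)^n$ lie in the same de Rham class, Stokes' theorem gives $\int_X \omega(t)^n = \int_X \omega_{REF}(t)^n$, and by construction
\begin{equation*}
\omega_{REF}(t) = E(t)\,\omega_0 + (1-E(t))\,f^*\eta.
\end{equation*}
I would then expand by the binomial theorem,
\begin{equation*}
\int_X \omega(t)^n = \sum_{k=0}^{n} \binom{n}{k} E(t)^{n-k}(1-E(t))^k \int_X \omega_0^{n-k}\wedge f^*\eta^k,
\end{equation*}
and observe that because $\dim B = m$, the form $f^*\eta^k$ vanishes identically for $k > m$, so the sum truncates at $k = m$. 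This displays $\Vol(X,\omega(t))$ as an explicit polynomial in $E(t)$ whose coefficients are fixed topological quantities depending only on $\omega_0$, $f^*\eta$, $n$, $m$.

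Next I would divide through by $E(t)^{n-m}$, giving
\begin{equation*}
E(t)^{-(n-m)}\int_X \omega(t)^n = \binom{n}{m}(1-E(t))^m\int_X \omega_0^{n-m}\wedge f^*\eta^m + \sum_{k=0}^{m-1} \binom{n}{k} E(t)^{m-k}(1-E(t))^k \int_X \omega_0^{n-k}\wedge f^*\eta^k.
\end{equation*}
Since $E(t) \to 0$ as $t \to T$, every term with $k < m$ vanishes in the limit and $(1-E(t))^m \to 1$, so the expression converges to $\binom{n}{m}\int_X \omega_0^{n-m}\wedge f^*\eta^m = \Vol(X, \Omega_{\omega_0,\eta})$, which is the stated limit.

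Finally, for the uniform two-sided bound I would argue by continuity: the right-hand side of the last display is a continuous function on $[0,T)$ that extends continuously to $[0,T]$ via the limit computed above, so it suffices to show it is strictly positive on the closed interval. Positivity at $t=T$ is the main point to verify and reduces to $\int_X \omega_0^{n-m}\wedge f^*\eta^m > 0$. This follows because $\omega_0$ is K\"ahler and $f^*\eta$ is a smooth semi-positive $(1,1)$-form on $X$ whose $m$-th wedge power is strictly positive on the open dense set $X\setminus S$ where $f$ is a submersion; thus the integrand is a non-negative $(n,n)$-form which is pointwise positive on $X\setminus S$. Positivity at $t=0$ is immediate since $\omega(0) = \omega_0$ is K\"ahler and $E(0) = 1$. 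A positive continuous function on the compact interval $[0,T]$ is bounded above and below, which yields the desired constant $C = C(n,m,\omega_0,f^*\eta,T)$. The only non-routine step is the verification that $\int_X \omega_0^{n-m}\wedge f^*\eta^m > 0$, but this is essentially automatic from the surjectivity and generic submersivity of $f$.
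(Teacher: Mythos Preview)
Your proposal is correct and follows essentially the same approach as the paper: replace $\omega(t)^n$ by $\omega_{REF}(t)^n$ cohomologically, expand $\omega_{REF}(t)=E(t)\omega_0+(1-E(t))f^*\eta$ binomially, truncate at $k=m$ using $f^*\eta^{k}=0$ for $k>m$, and read off the limit. Your continuity argument for the two-sided bound is a bit more explicit than the paper's (which leaves this implicit), but the content is the same.
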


\begin{proof}
This is an immediate consequence of the cohomological evolution of the K\"ahler-Ricci flow. Applying $\eta^k = 0$ for all $m < k \leq n$, we have
\begin{align*}
    E(t)^{-(n-m)} \Vol(X, \omega(t))
    &= \int_X E(t)^{-(n-m)}\omega_{REF}^n\\ 
    &= \int_{X} \sum_{k=0}^m {n \choose k} E(t)^{m-k} (1-E(t))^{k} \omega_0^{n-k} \wedge f^* \eta^k
\end{align*}
and the limit as $t \to T$ is equal to $\int_X \Omega_{\omega_0, \eta}$.
\end{proof}

\noindent This is well-known, i.e., that the volume along the flow collapses at the rate $E(t)^{n-m}$. As a consequence, one naturally expects that, in general, the volume form $\omega(t)^n$ collapses uniformly at the $E(t)^{n-m}$-rate globally on $X$. It is known for the continuity method that this is true \cite[Lem. 3.2]{ZZ20}, i.e. $\omega(t)^n \sim (T-t)^{n-m} \Omega$, but this behaviour equivalent to the $C^0$-bound of the potential function $\varphi$, which is unfortunately not the case for the K\"ahler-Ricci flow. On the other hand, when the K\"ahler-Ricci flow is run on a manifold with semi-ample canonical line bundle $K_X$, the flow develops a singularity at infinite time and $X$ admits a Calabi-Yau fibration structure. It is known that the volume form collapses at a rate of $\omega(t)^n \sim e^{-(n-m)t}\Omega$ in this setting, \cite{ST16},\cite[Lem. 2.3]{FZ12}, which would correspond to letting $T \to\infty$ in $E(t)$.\\

\noindent We take a slightly weaker version of the uniform collapsing of the volume form as an assumption. That is, we assume there exists a positive constant $U_X > 0$ and for any compact set $K \subseteq X \backslash S$ a positive constant $C_K > 0$, such that
\begin{align}\tag{VFC}\label{VFC}
    C_K^{-1}E(t)^{n-m}\Omega \leq \omega(t)^{n} \leq C_KE(t)^{n-m}\Omega \quad \text{on} \quad K \times [0,T),\\
    \omega(t)^n \leq U_XE(t)^{n-m} \Omega \quad \text{on} \quad X \times [0,T).\notag
\end{align}
Clearly these assumptions are independent of the choice of volume form $\Omega$. Moreover, if $f: X \to B$ is a submersion then we can simply set $K = X$. We also comment that it is useful to distinguish the local lower bounds and global upper bound with different constants $C_K$, $U_X$.

\begin{remark}
We compare this assumption with the work of Fong \cite{F15}. Fong considers the case where the map $f: X \to B$ is a submersion and assumes that the Ricci curvature is uniformly bounded above along the flow, i.e. $\Ric \omega(t) \leq C \omega_0$ on $X \times [0,T)$ for some $C > 0$. This is a strong assumption but allows Fong to find an upper bound on $\frac{\d \varphi}{\d t}$ and $U_X$ and, subsequently, an optimal bound on the collapsing rate of the diameters of the fibres. We can achieve similar results with our much weaker assumption (\ref{VFC}) and extend them to the general case where $X$ has singular fibres.
\end{remark}

\subsection{$C^0$-Estimates}

\begin{proposition}\label{submersion_C0_varphi_estimate}
There exists a constant $C = C(n,m,\omega_0,f^*\eta,T) > 0$ such that on $X \times [0,T)$ we have
\begin{equation*}
    \varphi \leq C.
\end{equation*}
Furthermore, if $f: X \to B$ is a submersion of compact K\"ahler manifolds then we also have
\begin{equation*}
    \varphi \geq -C,
\end{equation*}
on $X \times [0,T)$ for some $C = C(n,m,\omega_0,f^*\eta, T) > 0$.
\end{proposition}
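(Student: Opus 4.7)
The plan is to apply a two-sided parabolic maximum principle to the (\ref{CMA}) flow at spatial extrema of $\varphi(\cdot,t)$. The essential input is a pointwise control of $\omega_{REF}(t)^n$ relative to $E(t)^{n-m}\Omega$. Since $\omega_{REF}(t) = E(t)\omega_0 + (1-E(t))f^*\eta$ and $f^*\eta^k = 0$ for $k > m$, direct expansion of the $n$-th wedge power gives
\begin{equation*}
    \omega_{REF}(t)^n = E(t)^{n-m}\sum_{k=0}^m {n \choose k} E(t)^{m-k}(1-E(t))^k\, \omega_0^{n-k}\wedge f^*\eta^k.
\end{equation*}
Each summand is a non-negative top form with coefficient continuous on $[0,T]$, so the ratio $g(x,t) := E(t)^{-(n-m)}\omega_{REF}(t)^n/\Omega$ extends continuously to the compact set $X\times[0,T]$ and is bounded above by some constant $C_1 = C_1(n,m,\omega_0,f^*\eta,T) > 0$.

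For the global upper bound, at any spatial maximum $x_t$ of $\varphi(\cdot,t)$ one has $i\d\dbar\varphi(x_t,t)\leq 0$, so $\omega(t)^n|_{x_t} \leq \omega_{REF}(t)^n|_{x_t} \leq C_1\, E(t)^{n-m}\Omega|_{x_t}$. Inserting this into (\ref{CMA}) and applying Hamilton's ODE comparison to $\Phi(t) := \max_X \varphi(\cdot,t)$, one obtains
\begin{equation*}
    \frac{d\Phi}{dt} \leq \log C_1 - \Phi, \qquad \Phi(0) = 0,
\end{equation*}
which integrates to $\Phi(t) \leq (\log C_1)(1-e^{-t}) \leq \max(0,\log C_1)$, giving $\varphi \leq C$ on $X \times [0,T)$.

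For the lower bound we need a strictly positive lower bound on $g$, and this is exactly where the submersion hypothesis is used. When $f:X \to B$ is a submersion of compact K\"ahler manifolds, $\Omega_{\omega_0,\eta} = {n \choose m}\omega_0^{n-m}\wedge f^*\eta^m$ is a strictly positive volume form on $X$, so the $k=m$ summand $(1-E(t))^m\,\Omega_{\omega_0,\eta}/\Omega$ is strictly positive on $X\times(0,T]$; the $k = 0$ summand $E(t)^m \omega_0^n/\Omega$ is strictly positive on $X\times[0,T)$. Together they force $g > 0$ on the compact set $X\times[0,T]$, so $g \geq c_1 > 0$ for some $c_1 = c_1(n,m,\omega_0,f^*\eta,T)$. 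The dual argument at a spatial minimum of $\varphi(\cdot,t)$, where $\omega(t)^n \geq \omega_{REF}(t)^n \geq c_1 E(t)^{n-m}\Omega$, then yields
\begin{equation*}
    \frac{d\Phi_{\min}}{dt} \geq \log c_1 - \Phi_{\min}, \qquad \Phi_{\min}(0) = 0,
\end{equation*}
and integration gives $\min_X\varphi(\cdot,t) \geq \min(0,\log c_1)$.

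The only real obstacle is the behaviour of $g$ at the endpoint $t = T$: as $E(t) \to 0$ only the $k = m$ summand survives, and this form vanishes precisely on $S$. The submersion hypothesis is what rules out this degeneracy and permits a spatially uniform lower bound; in the general case one expects to be forced to work on compact $K \subseteq X\backslash S$ and to invoke the hypothesis (\ref{VFC}) to control $\omega(t)^n$ directly, as is done in the subsequent propositions.
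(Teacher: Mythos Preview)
Your proof is correct and follows essentially the same approach as the paper: both apply the maximum principle to (\ref{CMA}) using the expansion $E(t)^{-(n-m)}\omega_{REF}(t)^n = \sum_{k=0}^m \binom{n}{k} E(t)^{m-k}(1-E(t))^k\,\omega_0^{n-k}\wedge f^*\eta^k$ to obtain two-sided bounds on $\omega_{REF}(t)^n/(E(t)^{n-m}\Omega)$, with the submersion hypothesis used exactly to make $\Omega_{\omega_0,\eta}$ a genuine volume form for the lower bound. The only cosmetic differences are that the paper applies the space-time maximum principle on $[T/2,T)$ and handles $[0,T/2]$ by compactness, whereas you use Hamilton's ODE comparison and a continuous extension of $g$ to $X\times[0,T]$; these are equivalent implementations.
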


\begin{proof}
Note that
\begin{equation*}
    E(t)^{-(n-m)}\omega_{REF}^n = \sum_{k=0}^m {n \choose k} E(t)^{m-k} (1-E(t))^{k} \omega_0^{n-k} \wedge f^* \eta^k
\end{equation*}
satisfies
\begin{equation*}
    C^{-1} \Omega_{\omega_0,\eta} \leq E(t)^{-(n-m)} \omega_{REF}^n \leq C\Omega
\end{equation*}
on $X \times [T/2, T)$, since there exists a $C > 0$ such that for all $0 \leq k \leq m$
\begin{equation*}
    0 \leq \omega_0^{n-k} \wedge f^* \eta ^k \leq C\Omega
\end{equation*}
and we can bound the term $(1-E(t))^m$ below on $[T/2,T)$.\\
Therefore, by applying the maximum principle to (\ref{CMA}) we find that at a maximal point $(x^*,t^*)$ of $\varphi$,
\begin{equation*}
    \varphi(x^*,t^*) \leq \log \frac{E(t^*)^{-(n-m)}\omega^n_{REF}(x^*,t^*)}{\Omega(x^*)} \leq C.
\end{equation*}
Thus $\varphi \leq C$ on $X \times [0,T)$.\\
Additionally, if $f: X \to B$ is a submersion then $\Omega_{\omega_0,\eta}$ is a global volume form on $X$. It follows that at a minimal point $(x_*,t_*)$ of $\varphi$
\begin{equation*}
     \varphi(x_*,t_*) \geq \log \frac{E(t_*)^{-(n-m)} \omega_{REF}(x_*,t_*)^{n}}{\Omega(x_*)} \geq \log \frac{C^{-1}\Omega_{\omega_0,\eta}(x_*)}{\Omega(x_*)} \geq -C,
\end{equation*}
and thus $\varphi \geq -C$ on $X \times [T/2,T)$. This yields the desired result since $X \times [0,T/2]$ is compact.
\end{proof}

\begin{proposition}
We have the following evolution equations
\begin{align*}
    \parabolic{} \left(\frac{\d \varphi}{\d t} + \varphi\right) &= -n + \frac{n-m}{1-e^{t-T}} - \lambda \tr_{\omega(t)} \omega_0 + \frac{1}{1-e^{-T}} \tr_{\omega(t)} f^* \eta,\\
    \parabolic{} u &= -m + \tr_{\omega(t)} f^* \eta.
\end{align*}
\end{proposition}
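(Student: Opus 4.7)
The plan is to derive both identities by direct differentiation, starting from the (CMA) rewritten as $\frac{\d\varphi}{\d t} + \varphi = \log\frac{E(t)^{-(n-m)}\omega(t)^n}{\Omega}$, and the definition $u = (1-e^{t-T})\frac{\d\varphi}{\d t} + \varphi$. The only nontrivial inputs are the decomposition $\omega(t) = \omega_{REF}(t) + i\d\dbar\varphi(t)$ with $\omega_{REF}(t) = E(t)\omega_0 + (1-E(t))f^*\eta$, and the elementary algebraic identities $-(n-m)E'(t)/E(t) = (n-m)/(1-e^{t-T})$, $E(t) + E'(t) = -\lambda$, and $1 - E(t) - E'(t) = 1/(1-e^{-T})$, all of which follow at once from $E(t) = (e^{-t} - e^{-T})/(1-e^{-T})$ and $\lambda = e^{-T}/(1-e^{-T})$.

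For the first evolution equation, I will differentiate (CMA) in $t$ to get
\begin{equation*}
    \frac{\d^2\varphi}{\d t^2} + \frac{\d\varphi}{\d t} = \frac{n-m}{1-e^{t-T}} + \tr_{\omega(t)}\dot\omega_{REF}(t) + \Delta_{\omega(t)}\frac{\d\varphi}{\d t},
\end{equation*}
after splitting $\tr_{\omega(t)}\dot\omega(t) = \tr_{\omega(t)}\dot\omega_{REF}(t) + \Delta_{\omega(t)}\frac{\d\varphi}{\d t}$ via the decomposition of $\omega$. Separately, the identity $\Delta_{\omega(t)}(\frac{\d\varphi}{\d t} + \varphi) = \Delta_{\omega(t)}\frac{\d\varphi}{\d t} + n - \tr_{\omega(t)}\omega_{REF}(t)$ follows from $\Delta_{\omega(t)}\varphi = n - \tr_{\omega(t)}\omega_{REF}(t)$. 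Subtracting, the $\Delta_{\omega(t)}\frac{\d\varphi}{\d t}$ contributions cancel, and the coefficients of $\tr_{\omega(t)}\omega_0$ and $\tr_{\omega(t)}f^*\eta$ in the combination $\tr_{\omega(t)}\dot\omega_{REF}(t) + \tr_{\omega(t)}\omega_{REF}(t)$ simplify to $E(t)+E'(t) = -\lambda$ and $1 - E(t) - E'(t) = 1/(1-e^{-T})$, respectively, giving the first identity.

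For the second equation, I will compute
\begin{equation*}
    \frac{\d u}{\d t} = (1-e^{t-T})\left(\frac{\d^2\varphi}{\d t^2} + \frac{\d\varphi}{\d t}\right) = (n-m) + (1-e^{t-T})\tr_{\omega(t)}\dot\omega(t),
\end{equation*}
then substitute the K\"ahler--Ricci flow equation $\dot\omega(t) = -\Ric\omega(t) - \omega(t)$ to obtain $\frac{\d u}{\d t} = -m + ne^{t-T} - (1-e^{t-T})R_{\omega(t)}$. The matching identity for $\Delta_{\omega(t)} u$ is already recorded in the excerpt: the scalar curvature formula \eqref{scalar_curvature_formula} gives $(1-e^{t-T})R_{\omega(t)} = ne^{t-T} - \tr_{\omega(t)}f^*\eta - \Delta_{\omega(t)} u$. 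Subtracting produces cancellation of both the $R_{\omega(t)}$ and $ne^{t-T}$ terms, leaving $\parabolic{\omega(t)} u = -m + \tr_{\omega(t)} f^*\eta$.

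The entire calculation is algebraic and contains no genuine analytic difficulty; the only thing requiring real care is the bookkeeping of signs and time derivatives of $E(t)$, in particular verifying the cancellation $(1-e^{t-T})E'(t) + E(t) = 0$ (equivalently $E + E' = -\lambda$), which is the single identity powering the clean simplifications in both steps.
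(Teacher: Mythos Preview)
Your proof is correct. The overall strategy---differentiate (CMA) in $t$ and take $i\d\dbar$/trace---is the same as the paper's, but the bookkeeping is arranged a little differently. For the first identity, the paper applies $i\d\dbar$ to (CMA) to obtain $i\d\dbar(\d_t\varphi+\varphi)=-\Ric\omega+\lambda\omega_0-\tfrac{1}{1-e^{-T}}f^*\eta$ (via $\Ric\Omega=\chi$), then traces and subtracts; you instead expand $\tr_{\omega(t)}\dot\omega_{REF}+\tr_{\omega(t)}\omega_{REF}$ directly, which avoids any mention of curvature and makes the coefficients $E+E'=-\lambda$ and $1-E-E'=\tfrac{1}{1-e^{-T}}$ explicit. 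For the second identity, the paper derives $\parabolic{}u$ from the first identity via $\parabolic{}u=(1-e^{t-T})\parabolic{}(\d_t\varphi+\varphi)-e^{t-T}\Delta\varphi$, whereas you compute $\d_t u$ using the flow equation and then invoke the already-stated scalar curvature formula \eqref{scalar_curvature_formula}. Both routes are equally short; yours trades self-containment (relying on \eqref{scalar_curvature_formula}) for a cleaner first step that never passes through $\Ric\omega$.
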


\begin{proof}
Applying $\frac{\d}{\d t}$ to (\ref{CMA}) yields
\begin{equation}\label{time_derivative_eqn}
    \frac{\d}{\d t} \left( \frac{\d \varphi}{\d t} + \varphi\right)
    = \frac{n-m}{1-e^{t-T}} - R - n,
\end{equation}
whereas applying $i\d\dbar$ to (\ref{CMA}) yields
\begin{equation*}
    i\d\dbar\left(\frac{\d \varphi}{\d t} + \varphi\right) = -\Ric\omega + \lambda \omega_0 - \frac{1}{1-e^{-T}} f^* \eta.
\end{equation*}
The first evolution equation follows. On the other hand, observe that
\begin{equation*}
    \parabolic{} u = (1-e^{t-T}) \parabolic{} \left(\frac{\d \varphi}{\d t} + \varphi\right) - e^{t-T} \Delta \varphi.
\end{equation*}
The second evolution equation then follows from the first and by taking the trace of $\omega(t) = \omega_{REF}(t) + i\d\dbar \varphi$.
\end{proof}

\begin{proposition}
There exists a constant $C = C(n,m,\omega_0,f^*\eta, T) > 0$ such that on $X \times [0,T)$ we have
\begin{equation*}
    |u| \leq C.
\end{equation*}
\end{proposition}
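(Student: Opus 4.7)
The plan is to bound $u$ above and below by two different arguments, since the two directions are not symmetric.

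For the \emph{lower bound}, the key observation is that the evolution equation $\parabolic{} u = -m + \tr_{\omega(t)} f^*\eta$ has a right-hand side bounded below by $-m$, because $f^*\eta$ is a semi-positive $(1,1)$-form and so $\tr_{\omega(t)} f^*\eta \geq 0$. Setting $v := u + mt$, I would compute
\begin{equation*}
    \parabolic{} v = \tr_{\omega(t)} f^*\eta \geq 0 \quad \text{on } X \times [0,T).
\end{equation*}
Since $X$ is closed, the parabolic minimum principle then yields $v(x,t) \geq \min_X v(\cdot,0) = \min_X u(\cdot,0)$. Directly from (\ref{CMA}) at $t = 0$, $u(\cdot,0) = (1-e^{-T})\log(\omega_0^n/\Omega)$, which is a smooth function on the compact $X$ bounded below in terms of $n, m, \omega_0, f^*\eta, T$. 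This gives $u \geq \min_X u(0) - mT \geq -C$.

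For the \emph{upper bound}, I would rewrite $u$ algebraically so that (\ref{VFC}) can be applied. Starting from $u = (1-e^{t-T})\frac{\d\varphi}{\d t} + \varphi$ and using (\ref{CMA}) to substitute for $\frac{\d \varphi}{\d t} + \varphi$, one has
\begin{equation*}
    u = e^{t-T}\varphi + (1-e^{t-T})\left(\frac{\d\varphi}{\d t} + \varphi\right) = e^{t-T}\varphi + (1-e^{t-T}) \log\frac{E(t)^{-(n-m)}\omega(t)^n}{\Omega}.
\end{equation*}
The factors $e^{t-T}$ and $1-e^{t-T}$ are uniformly bounded on $[0,T]$. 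By Proposition \ref{submersion_C0_varphi_estimate}, $\varphi \leq C$ globally on $X \times [0,T)$. By the global half of (\ref{VFC}), $\omega(t)^n \leq U_X E(t)^{n-m} \Omega$, so the logarithm is at most $\log U_X$. Combining, $u \leq C$ on $X \times [0,T)$.

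The proof is short and I expect no substantive obstacle; the only conceptual point is the asymmetry between the two directions. The minimum principle applied to a simple shift of $u$ handles the lower bound purely from the evolution equation, but the same strategy fails for the upper bound because at a maximum of $u$ one only obtains $\tr_{\omega(t)} f^*\eta \geq m$, which does not constrain $u$. The upper bound therefore must go through the Monge--Amp\`ere identity and uses the global half of (\ref{VFC}) in an essential way; a global lower bound on $\omega(t)^n$ is not available (only the local $C_K$-version), which is precisely why the lower bound for $u$ cannot be obtained from this identity.
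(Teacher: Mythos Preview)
Your lower bound is correct and matches the paper's argument exactly.

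Your upper bound, however, does not prove the proposition as stated. The constant is claimed to be $C(n,m,\omega_0,f^*\eta,T)$ with no dependence on $U_X$, yet you invoke the global half of (\ref{VFC}) to bound $\log\bigl(E(t)^{-(n-m)}\omega(t)^n/\Omega\bigr)\le \log U_X$. This introduces a $U_X$-dependence that the statement explicitly excludes; in particular, the subsequent parabolic Schwarz lemma is also stated without any (\ref{VFC}) hypothesis and relies on this bound for $u$, so the unwanted dependence would propagate. Under (\ref{VFC}) your inequality is of course valid, but it proves a weaker result.

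The paper obtains the upper bound without (\ref{VFC}) by using the standard uniform lower bound on scalar curvature along the (normalized) K\"ahler--Ricci flow. From equation (\ref{time_derivative_eqn}),
\[
\frac{\d}{\d t}\Bigl(\tfrac{\d\varphi}{\d t}+\varphi\Bigr)=\frac{n-m}{1-e^{t-T}}-R-n\le \frac{n-m}{1-e^{t-T}}+C,
\]
so $\frac{\d\varphi}{\d t}+\varphi+(n-m)\log(e^{-t}-e^{-T})-Ct$ is nonincreasing in $t$; integrating gives $\frac{\d\varphi}{\d t}+\varphi\le C-(n-m)\log(e^{-t}-e^{-T})$. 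Writing $u=(1-e^{t-T})\bigl(\frac{\d\varphi}{\d t}+\varphi\bigr)+e^{t-T}\varphi$ and using $\varphi\le C$ together with the fact that $(1-e^{t-T})\log(e^{-t}-e^{-T})$ is bounded on $[0,T)$ yields $u\le C$ with the correct dependencies. This is the missing ingredient in your approach: replace the volume-form assumption by the unconditional scalar curvature lower bound.
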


\begin{proof}
From the evolution equation for $u$, it follows that
\begin{equation*}
    \parabolic{} u \geq -m
\end{equation*}
and the maximum principle implies $-C \leq u$. On the other hand, by the uniform lower bound on scalar curvature, it follows from equation (\ref{time_derivative_eqn}) that
\begin{equation*}
    \frac{\d }{\d t} \left( \frac{\d \varphi}{\d t} + \varphi + (n-m) \log (e^{-t}-e^{-T}) - Ct\right) \leq 0.
\end{equation*}
Integration yields
\begin{equation*}
    \frac{\d \varphi}{\d t} + \varphi + (n-m) \log (e^{-t}-e^{-T}) - Ct \leq C
\end{equation*}
and the desired result, $u \leq C$, will quickly follow.
\end{proof}

\begin{remark}
We note that this bound on $\frac{\d \varphi}{\d t}$ in combination with that of $\varphi$ only implies $\omega(t)^n \leq C\Omega$ which is already known and not desired. Also, the lower bound for $u$ in combination with the upper bound of $\varphi$ only implies $\frac{\d \varphi}{\d t} \geq -\frac{C}{1-e^{t-T}}$.
\end{remark}

\begin{lemma}[Parabolic Schwarz Lemma] For any K\"ahler metric $\theta$ on $\CP^N$, there exists a $C = C(n,m,\omega_0, f^*\eta, f^*\theta, T)> 0$ such that on $X \times [0,T)$ we have
\begin{equation*}
    0 \leq \tr_{\omega(t)} f^* \theta \leq C.
\end{equation*}
Moreover, for any smooth $(1,1)$-form $\alpha$ on $\CP^N$, there exists a positive constant $C = C(n,m,\omega_0,f^*\eta,f^*\alpha,T) > 0$ such that on $X \times [0,T)$ we have
\begin{equation*}
    -C \leq \tr_{\omega(t)} f^* \alpha \leq C.
\end{equation*}
\end{lemma}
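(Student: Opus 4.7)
The plan is to carry out the standard parabolic Schwarz (Chern--Lu) computation and couple it to the uniform bound on the Ricci potential $u$ obtained above. First I would compute, for $v := \tr_{\omega(t)} f^*\theta$ under the normalized flow (\ref{KRF}), the inequality
\[
\parabolic{} \log v \;\leq\; 1 + B v,
\]
where $B = B(\theta) > 0$ is an upper bound on the holomorphic bisectional curvature of the fixed smooth K\"ahler metric $\theta$ on the compact manifold $\CP^N$. The two contributions are: from $\frac{\d g^{i\bar j}}{\d t} = R^{i\bar j} + g^{i\bar j}$ (the $g^{i\bar j}$ coming from the normalization term $-\omega(t)$ in the flow) one obtains $\frac{\d}{\d t} \log v = \frac{R^{i\bar j}(f^*\theta)_{i\bar j}}{v} + 1$, while the Chern--Lu inequality gives $\Delta_{\omega(t)} \log v \geq \frac{R^{i\bar j}(f^*\theta)_{i\bar j}}{v} - B v$; the Ricci terms cancel.

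Next I combine this with the evolution equation $\parabolic{} u = -m + \tr_{\omega(t)} f^*\eta$ established in the previous proposition. Since $\theta$ and $\omega_{FS}$ are both smooth K\"ahler metrics on the compact manifold $\CP^N$, there exists $C_\theta > 0$ with $\theta \leq C_\theta \omega_{FS}$, hence $v \leq k C_\theta \tr_{\omega(t)} f^*\eta$ pointwise on $X$. Fix $A > k C_\theta B$ and set
\[
H \;:=\; \log v - A u.
\]
Combining the two evolution estimates gives
\[
\parabolic{} H \;\leq\; 1 + A m - (A - k C_\theta B)\,\tr_{\omega(t)} f^*\eta.
\]
For arbitrary $t^\ast \in (0,T)$, the function $H$ attains its maximum on the compact set $X \times [0, t^\ast]$, where one works on the open locus $\{v > 0\}$ since $H \to -\infty$ on the critical set of $f$. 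If the maximum occurs at $t = 0$ then $v$ is bounded by initial data; otherwise the maximum principle forces the right-hand side above to be non-negative at the max, giving $\tr_{\omega(t)} f^*\eta \leq (1 + A m)/(A - k C_\theta B)$ and hence a bound on $v$. Combined with the uniform bound $|u| \leq C$ already established, this yields a uniform upper bound for $H$, hence for $\log v$, on $X \times [0, T)$ independent of $t^\ast$. The lower bound $v \geq 0$ is immediate since $\theta$ is positive on $\CP^N$ and $f$ is holomorphic, so $f^*\theta$ is a non-negative $(1,1)$-form.

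For a general smooth $(1,1)$-form $\alpha$ on $\CP^N$, I would choose $K > 0$ large enough that $\theta_1 := \alpha + K \omega_{FS}$ and $\theta_2 := K \omega_{FS}$ are both K\"ahler metrics on $\CP^N$. Applying the first part separately to $\theta_1$ and $\theta_2$ gives uniform bounds $0 \leq \tr_{\omega(t)} f^*\theta_i \leq C$ on $X \times [0,T)$, and writing $\tr_{\omega(t)} f^*\alpha = \tr_{\omega(t)} f^*\theta_1 - \tr_{\omega(t)} f^*\theta_2$ delivers the desired two-sided bound $-C \leq \tr_{\omega(t)} f^*\alpha \leq C$.

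The main subtlety is the bookkeeping in the normalized Chern--Lu step: the extra $+1$ contributed by the $-\omega(t)$ term in (\ref{KRF}) is harmless, but the $B v$ term on the right of $\parabolic{} \log v \leq 1 + B v$ is not a priori small and must be absorbed. This is precisely the role of $-A \tr_{\omega(t)} f^*\eta$ coming from $\parabolic{} u$, but the absorption only works once $v = \tr_{\omega(t)} f^*\theta$ has been dominated by $\tr_{\omega(t)} f^*\eta$ via the compactness-based comparison $\theta \leq C_\theta \omega_{FS}$ on $\CP^N$; without this comparison step, the Schwarz estimate for a general $\theta$ cannot be closed using $u$ alone.
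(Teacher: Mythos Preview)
Your argument is correct and follows essentially the same route as the paper: the Chern--Lu inequality coupled with the evolution of $u$ via the maximum principle, followed by a compactness comparison of $(1,1)$-forms on $\CP^N$. The only cosmetic difference is that the paper runs the Schwarz computation once for $\eta$ and then deduces the bound for a general $\theta$ (or $\alpha$) directly from $\theta \leq A\eta$, whereas you run the Schwarz computation for $\theta$ and use the comparison $\theta \leq C_\theta\,\omega_{FS}$ inside the absorption step; either ordering closes the estimate.
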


\begin{proof}
It is well known that the following inequality holds
\begin{equation*}
    \parabolic{} \log (\tr_{\omega(t)} f^* \eta) \leq C \tr_{\omega(t)} f^* \eta + 1
\end{equation*}
where $C$ is a constant depending on the bisectional curvature of $\eta$, \cite{ST07}. Therefore, choosing a constant $A$ sufficiently large such that $A \geq C + 1$ yields
\begin{equation*}
    \parabolic{} \left(\log (\tr_{\omega(t)} f^* \eta) - Au\right) \leq - \tr_{\omega(t)} f^* \eta +Am + 1.
\end{equation*}
The maximum principle implies that $\tr_{\omega(t)} f^* \eta \leq C$. Now, for any $(1,1)$-form $\alpha$ we can choose $A$ to be a sufficiently large constant such that $A\eta - \alpha > 0$ is a K\"ahler metric on $\CP^N$. Hence,
\begin{equation*}
    \tr_{\omega(t)} f^* \alpha \leq A \tr_{\omega(t)} f^* \eta \leq C.
\end{equation*}
The lower bound follows by the same argument.
\end{proof}

\begin{proposition}\label{dvarphi_dt_bound}
If the (\ref{KRF}) satisfies (\ref{VFC}) then there exists positive constant $C = C(n,m,\omega_0,f^*\eta,T,U_X)$ such that on $X \times [0,T)$,
\begin{equation*}
    \frac{\d \varphi}{\d t} \leq C.
\end{equation*}
Also, for any compact set $K \subseteq X \backslash S$, there exists a constant $C = C(n,m,\omega_0,f^*\eta,T,C_K) > 0$ such that
\begin{equation*}
    \left|\frac{\d \varphi}{\d t}\right| \leq C
\end{equation*}
on $K \times [0,T)$.
\end{proposition}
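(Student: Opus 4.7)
The proof plan hinges on the simple algebraic identity
\begin{equation*}
    e^{t-T}\,\frac{\d\varphi}{\d t} \;=\; \Bigl(\tfrac{\d\varphi}{\d t}+\varphi\Bigr)\;-\;u,
\end{equation*}
which is immediate from the definition $u=(1-e^{t-T})\tfrac{\d\varphi}{\d t}+\varphi$. The first bracket is, by (\ref{CMA}), exactly $\log\frac{E(t)^{-(n-m)}\omega(t)^n}{\Omega}$, so bounds on this quantity come directly from (\ref{VFC}); the second term is controlled by the previous proposition which gives $|u|\leq C$.

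For the global upper bound, the plan is to combine the global half of (\ref{VFC}), which yields
\begin{equation*}
    \tfrac{\d\varphi}{\d t}+\varphi \;=\; \log\tfrac{E(t)^{-(n-m)}\omega(t)^n}{\Omega} \;\leq\; \log U_X
\end{equation*}
on all of $X\times[0,T)$, with the lower bound $u\geq -C$ from the previous proposition. Substituting into the identity above,
\begin{equation*}
    e^{t-T}\tfrac{\d\varphi}{\d t} \;\leq\; \log U_X + C.
\end{equation*}
Multiplying by $e^{T-t}\leq e^{T}$ produces a uniform constant $C=C(n,m,\omega_0,f^*\eta,T,U_X)$ with $\tfrac{\d\varphi}{\d t}\leq C$ on $X\times[0,T)$, as required. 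The key point to note is that $e^{T-t}$ stays bounded on $[0,T)$, so the presence of the singular factor $(1-e^{t-T})^{-1}$ in the Ricci potential, which made the naive attempt of Proposition 3.2 combined with $u\geq -C$ fail, is avoided completely.

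For the two-sided local bound on a compact $K\subseteq X\setminus S$, I would run the same computation in the opposite direction using the local half of (\ref{VFC}). There one has $\omega(t)^n\geq C_K^{-1}E(t)^{n-m}\Omega$ on $K\times[0,T)$, giving $\tfrac{\d\varphi}{\d t}+\varphi\geq -\log C_K$ on $K\times[0,T)$, while the upper bound $u\leq C$ is already global. Plugging both into the identity gives $e^{t-T}\tfrac{\d\varphi}{\d t}\geq -\log C_K - C$, and again multiplying by $e^{T-t}$ yields a local lower bound $\tfrac{\d\varphi}{\d t}\geq -C'$ on $K\times[0,T)$, depending on $C_K$. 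Combined with the already-established global upper bound, this delivers $\bigl|\tfrac{\d\varphi}{\d t}\bigr|\leq C$ on $K\times[0,T)$.

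There is no serious obstacle here: the whole argument is a two-line algebraic manipulation of the identity $e^{t-T}\tfrac{\d\varphi}{\d t}=\psi_1-u$ together with (\ref{VFC}) and $|u|\leq C$. No maximum principle, Schwarz lemma, or iteration is needed at this step; the delicate inputs (boundedness of $u$, the volume-form collapsing assumption) have been arranged in exactly the form one needs so that subtraction cancels the $\varphi$-term and the factor $e^{t-T}$ absorbs what would otherwise be the problematic behaviour near $t=T$.
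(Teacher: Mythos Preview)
Your proposal is correct and follows essentially the same approach as the paper: both use the identity $e^{t-T}\tfrac{\d\varphi}{\d t} = (\tfrac{\d\varphi}{\d t}+\varphi) - u$, bound the first term via (\ref{VFC}) and (\ref{CMA}), bound $|u|$ by the previous proposition, and then absorb the factor $e^{T-t}\le e^{T}$. The only difference is the order of presentation (the paper treats the local two-sided bound first, then the global upper bound), which is immaterial.
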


\begin{proof}
We note that (\ref{VFC}) implies
\begin{equation*}
    \left|\frac{\d \varphi}{\d t} + \varphi\right| \leq C
\end{equation*}
on $K \times [0,T)$ and thus
\begin{equation*}
    \left|e^{t-T}\frac{\d \varphi}{\d t}\right| \leq |u| + \left|\frac{\d \varphi}{\d t} + \varphi\right| \leq C.
\end{equation*}
Likewise, on $X \times [0,T)$, (\ref{VFC}) yields
\begin{equation*}
    \frac{\d \varphi}{\d t} + \varphi \leq C
\end{equation*}
and so
\begin{equation*}
    e^{t-T} \frac{\d \varphi}{\d t} = -u + \frac{\d \varphi}{\d t} + \varphi \leq C.
\end{equation*}
The result follows.
\end{proof}

\begin{proposition}
If the (\ref{KRF}) satisfies (\ref{VFC}) then there exists a constant $C = C(n,m,\omega_0,f^*\eta, T, U_X) > 0$ such that on $X \times [0,T)$ we have
\begin{equation*}
    \varphi \geq -C.
\end{equation*}
\end{proposition}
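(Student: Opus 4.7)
The plan is to combine the previous estimates algebraically via the definition of the Ricci potential $u = (1-e^{t-T})\frac{\partial \varphi}{\partial t} + \varphi$, which rearranges to
\begin{equation*}
    \varphi = u - (1-e^{t-T}) \frac{\partial \varphi}{\partial t}.
\end{equation*}
From the earlier proposition we already have the two-sided bound $|u| \leq C$ on $X \times [0,T)$, and Proposition \ref{dvarphi_dt_bound} gives $\frac{\partial \varphi}{\partial t} \leq C$ globally on $X \times [0,T)$ under the assumption (\ref{VFC}) (this is where the dependence on $U_X$ enters).

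Since $1-e^{t-T} \in [0,1]$ for all $t \in [0,T)$, the coefficient of $\frac{\partial \varphi}{\partial t}$ in the expression for $\varphi$ is non-negative and bounded by $1$. Therefore
\begin{equation*}
    \varphi = u - (1-e^{t-T}) \frac{\partial \varphi}{\partial t} \geq -C - (1-e^{t-T}) C \geq -2C,
\end{equation*}
yielding the claimed lower bound with a constant depending on $n, m, \omega_0, f^*\eta, T, U_X$.

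There is no genuine obstacle here; the work has all been done in the preceding propositions. The one subtlety worth emphasising in the write-up is that the global upper bound $\frac{\partial \varphi}{\partial t} \leq C$ (as opposed to a bound only on compact $K \subseteq X \setminus S$) is essential to obtain a lower bound for $\varphi$ on all of $X$, and this is exactly the point where the global upper half of (\ref{VFC}) controlled by $U_X$ gets used — matching the dependencies declared in the statement.
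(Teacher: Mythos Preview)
Your argument is correct and considerably shorter than the paper's. The identity $\varphi = u - (1-e^{t-T})\frac{\partial\varphi}{\partial t}$, together with $u \geq -C$ (which holds without any assumption on the volume form) and the global upper bound $\frac{\partial\varphi}{\partial t}\leq C$ from Proposition~\ref{dvarphi_dt_bound} (which uses only the $U_X$-part of (\ref{VFC})), immediately yields $\varphi \geq -2C$. There is no circularity: the proof of Proposition~\ref{dvarphi_dt_bound} relies only on $|u|\leq C$ and on (\ref{VFC}) via $\frac{\partial\varphi}{\partial t}+\varphi = \log\bigl(E(t)^{-(n-m)}\omega(t)^n/\Omega\bigr)\leq \log U_X$, not on any lower bound for $\varphi$.

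The paper takes a different and heavier route: it first observes that $E(t)^{-(n-m)}\omega(t)^n/\Omega$ has uniformly bounded $L^p$-norm (from the global upper bound on $\frac{\partial\varphi}{\partial t}$), then invokes a Ko\l odziej-type $L^\infty$-estimate from \cite{ST12,DP10,EGZ08} to obtain the oscillation bound $\sup_X\varphi - \inf_X\varphi \leq C$, and finally combines this with an integral lower bound $\sup_X(\varphi+\frac{\partial\varphi}{\partial t})\geq -C$ and the upper bound on $\frac{\partial\varphi}{\partial t}$ again. Your purely algebraic derivation bypasses the pluripotential input entirely; the paper's approach has the incidental by-product of an oscillation estimate for $\varphi$, but that estimate is not used elsewhere in the paper, so nothing is lost by your shortcut.
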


\begin{proof}
We note that for any $p > 1$
\begin{equation*}
    \int_{X} \left(\frac{E(t)^{n-m} e^{\frac{\d \varphi}{\d t}}+\varphi}{\Vol(X,\omega(t))}\right)^{p} \Omega \leq C
\end{equation*}
by Proposition \ref{dvarphi_dt_bound}. So applying \cite[Thm 2.5]{ST12}, (see also \cite{DP10, EGZ08}), yields
\begin{equation*}
    \sup_X \varphi - \inf_X \varphi \leq C.
\end{equation*}
Finally, 
\begin{equation*}
    e^{\sup_{X}(\varphi + \frac{\d \varphi}{\d t})} \int_X \Omega \geq \int_X e^{\varphi + \frac{\d \varphi}{\d t}} \Omega = \int_X E(t)^{-(n-m)} \omega(t)^n \geq C^{-1}
\end{equation*}
so that
\begin{equation*}
    \sup_{X} \left(\varphi + \frac{\d \varphi}{\d t}\right) \geq -C.
\end{equation*}
Therefore, we get our desired lower bound by applying Proposition \ref{dvarphi_dt_bound} again
\begin{equation*}
    \inf_X \varphi \geq -C + \sup_X \varphi \geq -C + \sup_X \left(\varphi + \frac{\d \varphi}{\d t}\right) - \sup_X \frac{\d \varphi}{\d t} \geq -C.
\end{equation*}
\end{proof}

\subsection{The Diameter Bounds}

\noindent Let  $K'$ be a compact set and $U'$ an open set such that $K' \subset \subset U' \subset \subset B\backslash S'$. As $f$ is a proper map, $f^{-1}(K')$ is a compact subset of $ X\backslash S$ and note that for every $K \subseteq X \backslash S$ we can always find a $K' \subseteq B \backslash S'$ such that $K \subseteq f^{-1}(K')$ and for $K = f^{-1}(K')$, $U = f^{-1}(U')$ we always have $K \subset \subset U \subset \subset X \backslash S$. We can choose $\chi$, \cite[Lem. 4.2]{J20}, \cite{T10}, to be a smooth cut-off function on $X$ where $0 \leq \chi \leq 1$ and
\begin{equation*}
    \chi = 0 \quad \text{ on } X\backslash U, \quad \chi = 1 \quad \text{ on } K,
\end{equation*}
and for some $C = C(K,f^*\eta) > 0$
\begin{equation*}
    -C f^*\eta \leq i\d\dbar \chi \leq C f^*\eta, \quad 0 \leq i\d \chi \wedge \dbar \chi \leq C f^*\eta.
\end{equation*}
It follows from the parabolic Schwarz lemma that there exists a positive constant $C = C(n,m,\omega_0,f^*\eta,T,K)$ such that on $X \times [0, T)$ we have
\begin{equation*}
    |\Delta_{\omega(t)} \chi | \leq C, \quad |\nabla \chi|_{\omega(t)}^2 \leq C.
\end{equation*}

\begin{proposition}\label{trace_lemma}
If the (\ref{KRF}) satisfies (\ref{VFC}) then for any K\"ahler metric $\theta$ on $X$, there exists a positive constant $C  = C(n,m,\omega_0,f^*\eta,\theta,T,K, C_K)$ such that on $K \times [0,T)$ we have
\begin{equation*}
    0 < \tr_{\omega(t)} \theta \leq \frac{C}{E(t)}.
\end{equation*}
Moreover, for any smooth $(1,1)$-form $\alpha$ on $X$, there exists a positive constant $C = C(n,m,\omega_0, f^*\eta, \alpha, T, K, C_K)$ such that on $K \times [0,T)$ we have
\begin{equation*}
    \frac{-C}{E(t)} \leq \tr_{\omega(t)} \alpha \leq \frac{C}{E(t)}.
\end{equation*}
\end{proposition}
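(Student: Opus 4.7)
The plan is a parabolic Schwarz-type argument adapted to the collapsing rate, combining the standard inequality for $\log \tr_{\omega(t)} \theta$ with the evolution identity for $\varphi + \frac{\d\varphi}{\d t}$ already established. Since $\theta$ is a smooth K\"ahler metric on the compact $X$ it has bounded bisectional curvature, and a standard computation (analogous to the parabolic Schwarz lemma above, applied to the identity map $(X,\omega(t)) \to (X,\theta)$) yields
\begin{equation*}
\parabolic{} \log \tr_{\omega(t)} \theta \leq C_1 \tr_{\omega(t)} \theta + 1
\end{equation*}
on $X \times [0,T)$, for a constant $C_1$ depending on $\theta$. Moreover compactness of $X$ provides $C_0 > 0$ such that $\theta \leq C_0 \omega_0$, so that $\tr_{\omega(t)} \omega_0 \geq C_0^{-1} \tr_{\omega(t)} \theta$.

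The key quantity I would analyse is
\begin{equation*}
Q \;=\; \log\!\left(E(t)\, \tr_{\omega(t)} \theta\right) + B\!\left(\varphi + \frac{\d\varphi}{\d t}\right),
\end{equation*}
on $X \times [0,T)$, where $B > 0$ is to be fixed. Using $E'(t)/E(t) = -(1-e^{t-T})^{-1}$, the evolution identity for $\varphi + \frac{\d\varphi}{\d t}$ established above, and the parabolic Schwarz lemma to absorb $\tr_{\omega(t)} f^*\eta$, a direct computation gives
\begin{equation*}
\parabolic{} Q \;\leq\; \frac{B(n-m)-1}{1-e^{t-T}} + \left(C_1 - B\lambda C_0^{-1}\right) \tr_{\omega(t)} \theta + C_2.
\end{equation*}
I would fix $B$ large enough that $B(n-m) \geq 1$ and $B\lambda C_0^{-1} \geq C_1 + 1$, and apply the maximum principle to $Q$ on $X \times [0, T-\varepsilon]$. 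At a maximum point $(x^*, t^*)$ the inequality $\parabolic{} Q \geq 0$ forces
\begin{equation*}
\tr_{\omega(t^*)} \theta(x^*) \;\leq\; \frac{B(n-m)-1}{1-e^{t^*-T}} + C_2 \;\leq\; \frac{C_3}{E(t^*)},
\end{equation*}
using the elementary identity $E(t)(1-e^{t-T})^{-1} = e^{-t}(1-e^{-T})^{-1}$ together with $E(t) \leq 1$. Therefore $\log(E(t^*)\tr_{\omega(t^*)}\theta(x^*))$ is bounded, and combined with the global upper bound $\varphi + \frac{\d\varphi}{\d t} \leq \log U_X$ from (\ref{VFC}) and (\ref{CMA}), we obtain $Q(x^*, t^*) \leq C_4$ uniformly in $\varepsilon$. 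Letting $\varepsilon \to 0$, $Q$ is bounded above globally by a constant.

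On $K$, the local lower bound of (\ref{VFC}) combined with (\ref{CMA}) gives $\varphi + \frac{\d\varphi}{\d t} \geq -\log C_K$, and thus
\begin{equation*}
\log\!\left(E(t)\, \tr_{\omega(t)} \theta\right) \;\leq\; C_4 + B \log C_K \;=:\; C_5 \qquad \text{on } K \times [0,T),
\end{equation*}
which is the desired upper bound $\tr_{\omega(t)} \theta \leq C_5 / E(t)$; positivity is immediate from $\theta > 0$. For an arbitrary smooth $(1,1)$-form $\alpha$ on $X$, compactness provides $A > 0$ with $A\omega_0 \pm \alpha > 0$ as K\"ahler forms, and applying the first statement with $\theta = \omega_0$ yields $|\tr_{\omega(t)} \alpha| \leq A \tr_{\omega(t)} \omega_0 \leq C/E(t)$. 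The main technical point is the dual role of $B$: it must be large enough both to dominate the bisectional-curvature constant $C_1$ so the $\tr_{\omega(t)}\theta$ terms combine with the correct sign, and to satisfy $B(n-m) \geq 1$ so that the blowing-up factor $(1-e^{t-T})^{-1}$ can be moved to the other side of the extracted inequality; both are compatible since $C_0, C_1, \lambda, n-m$ are fixed by the data, and the rate-matching $(1-e^{t-T})^{-1} \asymp E(t)^{-1}$ rests solely on the elementary identity above, so the collapsing rate $1/E(t)$ in the statement is precisely what this argument produces.
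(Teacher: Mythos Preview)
Your argument is correct and takes a genuinely different, more streamlined route than the paper. The paper proves the estimate for $\theta=\omega_0$ using the same test function $Q_1=\log\big((e^{-t}-e^{-T})\tr_{\omega(t)}\omega_0\big)+A(\partial_t\varphi+\varphi)$ but then localizes by multiplying with a cut-off $\Upsilon=e^{-C_1\chi^{-C_2}}$ supported on a neighbourhood of $K$; the maximum principle is applied to $\Upsilon Q_1$ on all of $X\times[0,T)$, and the extra terms produced by the cut-off are controlled using only the two-sided local bound $|\partial_t\varphi+\varphi|\leq C(C_{K'})$ on a slightly larger compact set $K'$. You instead run the maximum principle on $Q$ globally with no cut-off, exploiting that the global upper bound $\partial_t\varphi+\varphi\leq\log U_X$ from (\ref{VFC}) already controls $Q$ at the maximum wherever it falls, and then invoking the local lower bound $\partial_t\varphi+\varphi\geq-\log C_K$ only at the end on $K$. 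The payoff is a much shorter proof with no cut-off calculus; the cost is that your final constant depends on $U_X$ as well as $C_K$, whereas the paper's localized argument produces a constant depending only on the local data $C_{K'}$, matching the dependence listed in the statement. Two minor points: the case $t^*=0$ should be handled separately (it is trivial), and the requirement $B(n-m)\geq 1$ is not actually needed, since if $B(n-m)-1<0$ the blowing-up term only helps.
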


\begin{proof}
Note that
\begin{equation*}
    \parabolic{} \log(\tr_{\omega(t)} \omega_0) \leq C \tr_{\omega(t)} \omega_0 + 1
\end{equation*}
where $C$ is a constant depending on the bisectional curvature of $\omega_0$, \cite[Thm. 3.2.6]{BEG13}. Now, let $A > 0$ be a constant to be determined, and observe that for 
\begin{equation*}
    Q_1 = \log (e^{-t}-e^{-T}) \tr_{\omega(t)} \omega_0 + A(\d_t \varphi + \varphi)
\end{equation*}
we have
\begin{align*}
    \parabolic{}Q_1 &\leq (C - \lambda A) \tr_{\omega(t)} \omega_0 + 1 - \frac{e^{-t}}{e^{-t}-e^{-T}}\\
    &+A\left(-n + \frac{n-m}{1-e^{t-T}} + \frac{1}{1-e^{-T}} \tr_{\omega(t)} f^* \eta\right)\\
    & \leq (C - \lambda A)\tr_{\omega(t)} \omega_0 + \frac{C}{e^{-t}-e^{-T}}.
\end{align*}
Next let $\Upsilon = e^{-C_1\chi^{-C_2}}$ for some constants $C_1, C_2 > 0$, so that
\begin{align*}
    \parabolic{} \Upsilon Q_1 &= \Upsilon \parabolic{} Q_1 +\left(\frac{|\nabla \Upsilon|^2}{\Upsilon} -\Delta \Upsilon\right)Q_1 - 2 \Real \ip{\nabla (\Upsilon Q_1), \frac{\oo{\nabla} \Upsilon}{\Upsilon}}.
\end{align*}
The middle term can be estimated by first calculating
\begin{align*}
    \Delta \Upsilon &= C_1 C_2 (\Delta \chi) \chi^{-C_2-1}\Upsilon-C_1 C_2(C_2+1)|\nabla \chi|^2 \chi^{-C_2-2} \Upsilon+ C_1^2 C^2_2 |\nabla \chi|^2 \chi^{-2C_2-2} \Upsilon
\end{align*}
and
\begin{align*}
    |\nabla \Upsilon|^2 &= C_1^2C_2^2 |\nabla \chi|^2 \chi^{-2C_2-2} \Upsilon^2
\end{align*}
and observing that
\begin{align*}
    \left(\frac{|\nabla \Upsilon|^2}{\Upsilon} - \Delta \Upsilon\right) Q_1 &\leq \left(2 \left|\log \sqrt{(e^{-t}-e^{-T}) \tr_{\omega(t)} \omega_0)}\right| + A|\d_t \varphi + \varphi|\right) C \chi^{-2C_2-2}\Upsilon\\ 
    &\leq C \left|\log \sqrt{(e^{-t}-e^{-T})\tr_{\omega(t)} \omega_0} \right| \chi^{-2C_2-2}\Upsilon + C \chi^{-2C_2-2}\Upsilon,
\end{align*}
where the last inequality follows from the fact that $|\d_t \varphi + \varphi| \chi^{-2C_2-2}\Upsilon$ is supported on $U$ and we can always find another compact set $K'$ such that $K \subset \subset U \subset\subset K'  \subset \subset X \backslash S$ and apply $|\d_t \varphi| \leq C(C_{K'})$.\\
All together we find
\begin{align*}
    \parabolic{} \Upsilon Q_1 &\leq \left((C - \lambda A)\tr_{\omega(t)} \omega_0 + \frac{C}{e^{-t}-e^{-T}}\right) \Upsilon\\
    &+C \left|\log \sqrt{(e^{-t}-e^{-T})\tr_{\omega(t)} \omega_0} \right| \chi^{-2C_2-2}\Upsilon + C \chi^{-2C_2-2}\Upsilon\\
    &- 2 \Real \ip{\nabla (\Upsilon Q_1), \frac{\oo{\nabla} \Upsilon}{\Upsilon}}.
\end{align*}
Let $(x^*,t^*)$ be a point at which $\Upsilon Q_1$ achieves a maximum. If $x^* \in X \backslash U$ then $(\Upsilon Q_1)(x^*,t^*) =  0$ is bounded. If $x^* \in U$ then we can also assume $(e^{-t^*} - e^{-T}) \tr_{\omega(x^*,t^*)} \omega_0(x^*) > 1$ as otherwise we get
\begin{equation*}
    (\Upsilon Q_1)(x^*,t^*) \leq 0 + A(\d_t \varphi(x^*,t^*) + \varphi(x^*,t^*))\Upsilon(x^*) \leq C
\end{equation*}
since $\Upsilon \leq C$ and $\d_t \varphi + \varphi \leq C$ on the compact set $K'$ containing $U$. Therefore,
\begin{align*}
    0 &\leq \left((C - \lambda A)\tr_{\omega(x^*,t^*)} \omega_0(x^*) + \frac{C}{e^{-t^*}-e^{-T}}\right) \Upsilon(x^*)\\
    &+C \log \left(\sqrt{(e^{-t^*}-e^{-T})\tr_{\omega(x^*,t^*)} \omega_0(x^*)}\right) \chi(x^*)^{-2C_2-2}\Upsilon(x^*) + C \chi(x^*)^{-2C_2-2}\Upsilon(x^*)\\
    &\leq \left((C - \lambda A)\tr_{\omega(x^*,t^*)} \omega_0(x^*) + \frac{C}{e^{-t^*}-e^{-T}}\right) \Upsilon(x^*)\\
    &+C\Big( (e^{-t^*}-e^{-T})\tr_{\omega(x^*,t^*)} \omega_0(x^*) \chi(x^*)^{2C_2 + 2} + \chi(x^*)^{-2C_2-2} \Big) \chi(x^*)^{-2C_2-2}\Upsilon(x^*)\\
    &+ C \chi(x^*)^{-2C_2-2}\Upsilon(x^*).
\end{align*}
It follows that
\begin{align*}
    (\lambda A -C) \left(\tr_{\omega(x^*,t^*)} \omega_0(x^*)\right) \Upsilon(x^*) \leq \frac{C}{e^{-t^*}-e^{-T}} \Upsilon(x^*) + C\chi(x^*)^{-4C_2-4}\Upsilon(x^*),
\end{align*}
so by choosing $A$ sufficiently large so that $\lambda A - C = 1$ we have
\begin{equation*}
    (e^{-t^*}-e^{-T})\tr_{\omega(x^*,t^*)} \omega_0(x^*) \leq C,
\end{equation*}
and finally, since $x^* \in U$,
\begin{equation*}
    (\Upsilon Q_1)(x^*,t^*) \leq  C.
\end{equation*}
Therefore we conclude that for all $(x,t) \in X \times [0,T)$,
\begin{equation*}
    (\Upsilon Q_1)(x,t) \leq C
\end{equation*}
so that
\begin{equation*}
    (e^{-t}-e^{-T}) \tr_{\omega(t)} \omega_0 \leq e^{(C\Upsilon^{-1}-A(\d_t \varphi + \varphi))}
\end{equation*}
and the desired result for $\theta = \omega_0$ follows. We extend to the remaining cases via the same argument as the parabolic Schwarz lemma.
\end{proof}

\begin{proposition}
If the (\ref{KRF}) satisfies (\ref{VFC}) then there exists a positive constant $C = C(n,m,\omega_0,f^*\eta,T,K,C_K)$ such that on $K \times [0,T)$ we have
\begin{equation*}
    C^{-1} \omega_{REF}(t) \leq \omega(t) \leq C \omega_{REF}(t).
\end{equation*}
\end{proposition}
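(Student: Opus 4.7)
I would split the two-sided bound into the two inequalities and prove each using the trace estimates already in hand, together with the volume form hypothesis (\ref{VFC}).

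For the lower bound $C^{-1}\omega_{REF}(t)\leq\omega(t)$, the plan is to bound $\omega_{REF}(t)=E(t)\omega_0+(1-E(t))f^*\eta$ above by a multiple of $\omega(t)$ term by term. Applying Proposition \ref{trace_lemma} with $\theta=\omega_0$ gives $\tr_{\omega(t)}\omega_0\leq C/E(t)$ on $K$, which at each point (after simultaneous diagonalisation) reads $E(t)\omega_0\leq C\omega(t)$. The parabolic Schwarz lemma applied to the Fubini--Study metric gives $\tr_{\omega(t)}f^*\eta\leq C$ on all of $X$, hence $(1-E(t))f^*\eta\leq f^*\eta\leq C\omega(t)$. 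Adding the two contributions yields $\omega_{REF}(t)\leq C\omega(t)$ on $K\times[0,T)$.

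For the upper bound $\omega(t)\leq C\omega_{REF}(t)$, I would use the elementary pointwise inequality valid for any pair of positive $(1,1)$-forms on an $n$-fold,
\begin{equation*}
\tr_{\omega_{REF}(t)}\omega(t)\leq\frac{1}{(n-1)!}\bigl(\tr_{\omega(t)}\omega_{REF}(t)\bigr)^{n-1}\frac{\omega(t)^n}{\omega_{REF}(t)^n}.
\end{equation*}
The first factor is controlled by the lower-bound step: $\tr_{\omega(t)}\omega_{REF}(t)\leq C$ on $K\times[0,T)$. For the volume ratio, the upper half of (\ref{VFC}) gives $\omega(t)^n\leq C_K E(t)^{n-m}\Omega$ on $K$. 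For the denominator I would expand
\begin{equation*}
\omega_{REF}(t)^n=\sum_{k=0}^m\binom{n}{k}E(t)^{n-k}(1-E(t))^k\,\omega_0^{n-k}\wedge f^*\eta^k,
\end{equation*}
using $f^*\eta^k=0$ for $k>m$. The $k=m$ summand equals $E(t)^{n-m}(1-E(t))^m\,\Omega_{\omega_0,\eta}$, and on the compact set $K\subset\subset X\setminus S$ the form $\Omega_{\omega_0,\eta}$ is strictly positive, so it is comparable to $\Omega$ with constants depending on $K$. On $[T/2,T)$ the factor $(1-E(t))^m$ is bounded below by a constant depending on $T$, which yields $\omega_{REF}(t)^n\geq c_K E(t)^{n-m}\Omega$ on $K\times[T/2,T)$. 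Combined with the numerator bound, $\omega(t)^n/\omega_{REF}(t)^n\leq C$ on $K\times[T/2,T)$, and plugging back gives $\tr_{\omega_{REF}(t)}\omega(t)\leq C$, hence $\omega(t)\leq C\omega_{REF}(t)$ there. The remaining interval $K\times[0,T/2]$ is a compact subset of $X\setminus S\times[0,T)$ on which the flow is smooth with $\omega_{REF}(t)$ uniformly K\"ahler, so both bounds hold trivially.

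\noindent\textbf{Main obstacle.} The delicate point is obtaining the uniform lower bound on $\omega_{REF}(t)^n$ in the collapsing regime $t\to T$. The volume form $\Omega_{\omega_0,\eta}$ is precisely the object which detects the Fano-fibration degeneration---it vanishes on $S$ and captures only the top horizontal $\wedge$ top vertical piece---so the restriction to $K\subset\subset X\setminus S$ and the comparability $\Omega_{\omega_0,\eta}\sim\Omega$ on $K$ are both essential; without them the ratio $\omega(t)^n/\omega_{REF}(t)^n$ would blow up like $E(t)^{-m}$. Once this comparability is in place, the rest of the argument is the standard volume-plus-trace route to a two-sided metric equivalence.
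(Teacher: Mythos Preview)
Your proposal is correct and follows essentially the same route as the paper: trace bounds from the parabolic Schwarz lemma and Proposition~\ref{trace_lemma} give $\tr_{\omega(t)}\omega_{REF}(t)\leq C$ and hence the lower bound, and then the volume ratio $\omega(t)^n/\omega_{REF}(t)^n\leq C$ (via (\ref{VFC}) and $\omega_{REF}(t)^n\geq (1-E(t))^m\Omega_{\omega_0,\eta}\geq c_K E(t)^{n-m}\Omega$ on $K\times[T/2,T)$) combines with that trace bound to give the upper bound. The only difference is cosmetic: you write out the pointwise trace--volume inequality explicitly, while the paper leaves that standard step implicit.
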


\begin{proof}
By the bounds $\tr_{\omega(t)} f^* \eta \leq C$ and $\tr_{\omega(t)} \omega_0 \leq \frac{C}{E(t)}$, we have on $K \times [0,T)$ that
\begin{equation*}
    \omega(t) \geq C^{-1} f^*\eta, \quad \omega(t) \geq C^{-1} E(t) \omega_0.
\end{equation*}
It follows that $0 < \tr_{\omega(t)} \omega_{REF}(t) \leq C$ and
\begin{equation*}
    \omega(t) \geq C^{-1} \omega_{REF}(t).
\end{equation*}
On the other hand, by our assumption (\ref{VFC}), on $K \times [T/2, T)$ we have
\begin{equation*}
    \frac{\omega_{REF}(t)^n}{\omega(t)^n} \geq \frac{\omega_{REF}(t)^n}{C_K E(t)^{n-m} \Omega} \geq \frac{(1-E(t))^m \Omega_{\omega_0,\eta}}{C_K \Omega} \geq C^{-1}.
\end{equation*}
In combination with $\tr_{\omega(t)} \omega_{REF}(t) \leq C$ it follows that
\begin{equation*}
    \omega(t) \leq C \omega_{REF}(t).
\end{equation*}
\end{proof}

\begin{corollary}
If the (\ref{KRF}) satisfies $(\ref{VFC})$ then for any compact set $K' \subseteq B\backslash S'$, there exists a positive constant $C = C(n,m,\omega_0,f^*\eta,T, K', C_{f^{-1}(K')})$ such that for all $(b,t) \in K' \times[0,T)$ we have
\begin{equation*}
     C^{-1} \sqrt{E(t)} \leq \Diam(X_b, \omega(t)|_{X_b}) \leq C \sqrt{E(t)}.
\end{equation*}
Moreover, for any path-connected, compact set $K \subseteq X \backslash S$, there exists a positive constant $C = C(n,m,\omega_0,f^*\eta,T,K,C_K)$ such that for all $t\in [0,T)$ we have
\begin{equation*}
    C^{-1}\Diam(f(K),\eta|_{f(K)}) \leq \Diam(K, \omega(t)|_K) \leq C.
\end{equation*}
\end{corollary}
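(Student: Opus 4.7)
The plan is to use the two-sided pointwise comparison $C^{-1}\omega_{REF}(t)\leq\omega(t)\leq C\omega_{REF}(t)$ on $f^{-1}(K')$ (respectively on $K$) from the previous proposition, together with the explicit form $\omega_{REF}(t)=E(t)\omega_0+(1-E(t))f^*\eta$.

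For the fibre bound, fix $b\in K'$ and observe that $f^*\eta|_{X_b}\equiv 0$ since $f$ is constant on the fibre. Restricting the comparison to $X_b$ therefore yields
\begin{equation*}
C^{-1}E(t)\,\omega_0|_{X_b}\leq\omega(t)|_{X_b}\leq CE(t)\,\omega_0|_{X_b},
\end{equation*}
and passing to lengths and diameters gives $\Diam(X_b,\omega(t)|_{X_b})\asymp\sqrt{E(t)}\,\Diam(X_b,\omega_0|_{X_b})$. To conclude it suffices to show that $b\mapsto\Diam(X_b,\omega_0|_{X_b})$ is bounded above and below by positive constants on the compact $K'\subset\subset B\setminus S'$. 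Since $K'$ lies in the submersion locus, Ehresmann's fibration theorem provides a local diffeomorphism $f^{-1}(U')\cong U'\times X_{b_0}$ under which $\omega_0|_{X_b}$ becomes a smoothly varying family of K\"ahler metrics on the fixed compact manifold $X_{b_0}$; the diameter of such a family depends continuously on the parameter, hence is bounded above and below on the compact $K'$.

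For the compact set bound, the upper estimate follows from $f^*\eta\leq C\omega_0$ on $X$ (as $f^*\eta$ is smooth and semi-positive, while $\omega_0$ is K\"ahler), which yields $\omega_{REF}(t)\leq C\omega_0$ and hence $\omega(t)\leq C\omega_0$ on $K$, so that $\Diam(K,\omega(t)|_K)\leq C\,\Diam(K,\omega_0|_K)\leq C$. For the lower bound, the parabolic Schwarz estimate $\tr_{\omega(t)}f^*\eta\leq C$ is equivalent, upon simultaneous diagonalisation at each point using semipositivity of $f^*\eta$, to the pointwise inequality $\omega(t)\geq C^{-1}f^*\eta$. Hence for any path $\gamma\subset K$ joining $x,y\in K$,
\begin{equation*}
L_{\omega(t)}(\gamma)\geq C^{-1}L_{f^*\eta}(\gamma)=C^{-1}L_{\eta}(f\circ\gamma)\geq C^{-1}d_{\eta|_{f(K)}}(f(x),f(y)),
\end{equation*}
since $f\circ\gamma$ is a path in $f(K)$ joining $f(x)$ to $f(y)$. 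Taking the infimum over $\gamma$ and supremum over $x,y\in K$ delivers the desired lower bound.

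The main subtle point is the uniform lower bound on the $\omega_0$-diameters of fibres as $b$ varies over $K'$: one must ensure that no fibre is allowed to collapse or develop singularities, which is guaranteed precisely by $K'$ being a compact subset of the regular locus $B\setminus S'$. Everything else is a routine consequence of the already-established equivalence of $\omega(t)$ with $\omega_{REF}(t)$ and the vanishing of $f^*\eta$ along fibres.
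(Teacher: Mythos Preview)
Your proof is correct and follows essentially the same approach as the paper: both restrict the comparison $C^{-1}\omega_{REF}(t)\leq\omega(t)\leq C\omega_{REF}(t)$ to fibres (using $f^*\eta|_{X_b}=0$) and invoke compactness of $K'$ for the first part, and use $f^*\eta\leq C\omega_0$ for the upper diameter bound on $K$. The only minor variation is in the lower bound for $\Diam(K,\omega(t)|_K)$: the paper passes through $\omega_{REF}(t)\geq c\,f^*\eta$ on $[T/2,T)$, whereas you invoke the parabolic Schwarz estimate $\tr_{\omega(t)}f^*\eta\leq C$ directly to get $\omega(t)\geq C^{-1}f^*\eta$ on all of $[0,T)$---a fact the paper itself already recorded in the proof of the preceding proposition---so the difference is cosmetic.
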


\begin{proof}
Note that $C^{-1}\omega_{REF}(t) \leq \omega(t) \leq C \omega_{REF}(t)$ implies
\begin{equation*}
    C^{-1} E(t) \omega_{0,b} \leq  \omega(t)|_{X_b} \leq C E(t) \omega_{0,b}.
\end{equation*}
This implies
\begin{equation*}
    \Diam(X_b, C^{-1}E(t)\omega_{0,b}) \leq \Diam(X_b, \omega(t)|_{X_b}) \leq \Diam(X_b, CE(t)\omega_{0,b})
\end{equation*}
and the first result follows from compactness. On the other hand, note that for some $C > 0$, we have $C\omega_0 \geq f^* \eta$ on $X$ and so $C^{-1} \omega_{REF}(t) \leq \omega(t) \leq C \omega_{REF}(t)$ implies
\begin{equation*}
    \omega(t) \leq C \omega_0,
\end{equation*}
on $K \times [0,T)$ and hence
\begin{equation*}
    \Diam(K,\omega(t)|_K) \leq C.
\end{equation*}
Furthermore, on $X \times [T/2,T)$ we have
\begin{equation*}
    \omega_{REF}(t) \geq \frac{1-e^{-T/2}}{1-e^{-T}} f^*\eta
\end{equation*}
so that for any $x,y \in K$ we have
\begin{align*}
d_{\omega_{REF}|_K}(x,y) &= \inf_{\gamma} \int_0^1 \sqrt{g_{REF}(\gamma'(\tau),\gamma'(\tau))} d \tau\\
&\geq {C}^{-1} \inf_{\gamma} \int_0^1 \sqrt{f^*g_{\eta}(\gamma'(\tau),\gamma'(\tau))} d \tau
\end{align*}
where $\gamma:[0,1] \to K$ is any piecewise, smooth curve from $\gamma(0) = x$ to $\gamma(1) = y$. Thus,
\begin{align*}
    d_{\omega_{REF}|_K}(x,y) &\geq C^{-1}\inf_{\gamma} \int_0^1 \sqrt{g_{\eta}((f \circ\gamma)'(\tau),(f\circ \gamma)'(\tau)} d \tau\\
    &\geq C^{-1}\inf_{\phi} \int_0^1 \sqrt{g_{\eta}(\phi'(\tau), \phi'(\tau))} d\tau\\
    &=C^{-1}d_{\eta|_{f(K)}}(f(x),f(y)),
\end{align*}
where $\phi:[0,1] \to f(K)$ is any piecewise, smooth curve from $\phi(0) = f(x)$ to $\phi(1) = f(y)$. It follows that
\begin{equation*}
    \Diam(K,\omega_{REF}(t)|_K) \geq C^{-1} \Diam(f(K),\eta|_{f(K)}).
\end{equation*}
Therefore, as $C^{-1}\omega_{REF}(t) \leq \omega(t)$ on $K \times [0,T)$, we have
\begin{equation*}
    \Diam(K,\omega(t)|_K) \geq C^{-1} \Diam(K,\omega_{REF}(t)|_{K})
\end{equation*}
and the result follows.
\end{proof}

\section{Improved $C^0$-Estimates for the Complex Monge-Ampere Flow}

Now, we recall the construction of the semi-prescribed Ricci curvature form $\omega_{SPR}$, \cite{B25II, ZZ20}. For our purposes, it suffices to know that $\omega_{SPR} = \omega_0 + i\d\dbar \rho_{SPR}$ is a smooth $(1,1)$-form on $X \backslash S$ where $\rho_{SPR} \in C^{\infty}(X \backslash S)$ and the restriction of $\omega_{SPR}$ to any smooth fibre $X_b$, denoted by $\omega_{SPR,b}$, is a K\"ahler metric with Ricci curvature prescribed by
\begin{equation*}
    \Ric \omega_{SPR,b} = \lambda \omega_{0,b}.
\end{equation*}
Furthermore, as in $\cite{B25II, ZZ20}$, there exists a singular, twisted K\"ahler-Einstein metric on $B$, denoted $\omega_{B}$, which is constructed by solving the complex Monge-Ampere equation
\begin{equation}\label{base_space_CMA_1}
    (\eta + i\d\dbar \rho_B)^m = G' e^{\rho_B} \eta^m
\end{equation}
where
\begin{equation*}
    G' = \frac{f_* \Omega}{V \eta^m}
\end{equation*}
for $V = {n\choose m} \int_{X_b} \omega_{0,b}^{n-m}$, and $G = f^*G'$ on $X \backslash S$ where
\begin{equation*}
    G = \frac{\Omega}{{n\choose m} \omega_{SPR}^{n-m} \wedge f^*\eta^m}. 
\end{equation*}
This complex Monge-Ampere equation has a unique solution $\rho_B \in \PSH(B,\eta) \cap C^0(B) \cap C^{\infty}(B \backslash S')$ so that
\begin{equation*}
    \omega_B = \eta + i\d\dbar \rho_B
\end{equation*}
is a singular K\"ahler metric on the base. In fact, $\omega_B$ satisfies the following twisted K\"ahler-Einstein equation
\begin{equation*}
    \Ric \omega_B = -\omega_B - \lambda \eta + \omega_{WP,\lambda}
\end{equation*}
where $\omega_{WP,\lambda}$ is a $(1,1)$-form related to the Weil-Petersson metric constructed in \cite{B25II}.\\
Additionally, the complex Monge-Ampere equation
\begin{equation*}
    \left(\frac{1}{1-e^{-T}}\eta + i\d\dbar \rho_B'\right)^m = G' e^{\rho_B'} \left(\frac{1}{1-e{^-T}}\eta\right)^m
\end{equation*}
also has a unique solution $\rho_B' \in \PSH(B,\frac{1}{1-e^{-T}}\eta) \cap C^0(B) \cap C^{\infty}(B \backslash S')$ so that
\begin{equation*}
    \omega_B' = \frac{1}{1-e^{-T}} \eta + i\d\dbar \rho_B'
\end{equation*}
is another singular K\"ahler metric on $B$. The metric $\omega_B'$ satisfies the following twisted K\"ahler-Einstein equation on $ B \backslash S'$, \cite{B25II},
\begin{equation*}
    \Ric \omega_B' = -\omega_B' + \omega_{WP,\lambda}.
\end{equation*}

\noindent Furthermore, it will be of interest to consider the case where $f: X \to B$ generically admits a smoothly varying family of K\"ahler-Einstein metrics. To be precise, we assume there exists a $p > 1$ and $\rho_{SKE} \in C^{\infty}(X\backslash S)$ such that $e^{-\lambda \rho_{SKE}} \in L^p(X,\Omega)$ and $\rho_{SKE}$ can have a logarithmic pole of a small order from above along $S$. Additionally, the semi K\"ahler-Einstein form $\omega_{SKE} = \omega_0 + i\d\dbar \rho_{SKE}$ is a $d$-closed, real $(1,1)$-form on $X \backslash S$ for which we require the restriction $\omega_{SKE,b} := \omega_{SKE}|_{X_b}$ is a K\"ahler metric satisfying
\begin{equation}\tag{SKE}\label{SKE}
    \Ric \omega_{SKE,b} = \lambda \omega_{SKE,b}.
\end{equation}
As in \cite{B25II}, given the assumption (\ref{SKE}), one can again construct suitable twisted K\"ahler-Einstein metrics $\omega_B$, $\omega_B'$, on the base by replacing $G$ and $G'$ above by
\begin{equation*}
    G' = \frac{f_*( e^{-\lambda \rho_{SKE}}\Omega)}{V \eta^m}
\end{equation*}
where $V = {n \choose m} \int_{X_b} \omega_{0,b}^{n-m}$ and
\begin{equation*}
    G = \frac{e^{-\lambda \rho_{SKE}}\Omega}{{n\choose m} \omega_{SKE}^{n-m} \wedge f^*\eta^m},
\end{equation*}
and adapting the definition of the $(1,1)$-form $\omega_{WP,\lambda}$.

\begin{proposition}\label{potential_convergence}
If (\ref{KRF}) satisfies (\ref{VFC}) then for any compact set $K \subseteq X \backslash S'$ there exists a $C = C(n,m,T,\omega_0,f^*\eta, \omega_{SPR},K, C_{K})> 0$ such that on $K \times [0,T)$ we have
\begin{equation*}
    |\varphi(t) - f^*\varphi_{avg}(t)| \leq CE(t),
\end{equation*}
where $\varphi_{avg}$ denotes the fibre-wise average of $\varphi$ with respect to $\omega_{0,b}$.
\end{proposition}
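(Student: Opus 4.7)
The plan is to restrict the Monge-Amp\`ere decomposition $\omega(t) = \omega_{REF}(t) + i\d\dbar\varphi$ to each smooth fibre and exploit the metric equivalence established in the previous proposition. Since $f^*\eta$ pulls back trivially to any fibre $X_b$ with $b \in B\backslash S'$, we have $\omega_{REF}(t)|_{X_b} = E(t)\omega_{0,b}$, and hence
\begin{equation*}
    \omega(t)|_{X_b} = E(t)\omega_{0,b} + i\d\dbar(\varphi(t)|_{X_b}).
\end{equation*}
Enlarging $K$ to $\tilde K = f^{-1}(f(K))$ if necessary (which is compact and contained in $X \backslash S$ since $f$ is proper and $f(K) \subseteq B \backslash S'$ is compact), the metric equivalence $C^{-1}\omega_{REF}(t) \leq \omega(t) \leq C\omega_{REF}(t)$ restricts to give $C^{-1}E(t)\omega_{0,b} \leq \omega(t)|_{X_b} \leq CE(t)\omega_{0,b}$ for every $b \in f(K)$ and $t \in [0,T)$.

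Taking the trace of the previous inequality with respect to $\omega_{0,b}$ and subtracting the baseline $(n-m)E(t)$ yields $|\Delta_{\omega_{0,b}}(\varphi|_{X_b})| \leq C_1 E(t)$ uniformly on $X_b$, where $C_1$ inherits the dependence on $C_K$. The function $\psi_b := \varphi|_{X_b} - \varphi_{avg}(b,t)$ has zero mean on $X_b$ with respect to the probability measure proportional to $\omega_{0,b}^{n-m}$. By the standard representation via the Green's function of $\Delta_{\omega_{0,b}}$,
\begin{equation*}
    \|\psi_b\|_{L^\infty(X_b)} \leq C_2(X_b, \omega_{0,b}) \cdot \|\Delta_{\omega_{0,b}}\varphi|_{X_b}\|_{L^\infty(X_b)},
\end{equation*}
where $C_2$ can be taken as the $L^1$-norm of the Green's function.

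It remains to observe that $C_2(X_b, \omega_{0,b})$ is uniformly bounded for $b \in f(K)$. Since $K \subseteq X \backslash S$ and $f$ is a submersion on $X \backslash S$, the family $\{(X_b, \omega_{0,b}) : b \in f(K)\}$ is a smooth family of compact K\"ahler manifolds parameterised by a compact base; covering $f(K)$ by finitely many trivialising neighbourhoods and using continuity of the fibrewise metric yields uniform control on the Green's functions. Combining these steps produces $|\varphi - f^*\varphi_{avg}| \leq C_1 C_2 E(t)$ on $K \times [0,T)$, which is the desired inequality.

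The main technical point, rather than any deep obstacle, is the uniform dependence of the constant $C_2$ on $b$; once one invokes the smoothness of the fibration over $f(K) \subset\subset B \backslash S'$, it follows from standard continuity and compactness arguments. The dependence of the final constant on $\omega_{SPR}$ and $C_K$ is simply inherited from the metric equivalence used in Step 1, so no further input is required.
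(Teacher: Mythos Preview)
Your argument is correct and, in fact, somewhat more streamlined than the paper's. The paper does not invoke the metric equivalence $C^{-1}\omega_{REF}(t)\le\omega(t)\le C\omega_{REF}(t)$ from the preceding proposition; instead it works directly with the fibrewise Monge--Amp\`ere equation $(\omega_{0,b}+i\d\dbar Q_2|_{X_b})^{n-m}=G_b\,\omega_{0,b}^{n-m}$, bounds $G_b$ using (\ref{VFC}) and the Schwarz lemma, and then applies Yau's $L^\infty$-estimate (requiring uniform control of fibrewise Sobolev and Poincar\'e constants) when $n-m>1$, falling back on a Green's function argument via $\omega_{SPR,b}$ only when $n-m=1$. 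By contrast, you use the already-established metric equivalence to get a two-sided bound on $\omega(t)|_{X_b}$ and hence on $\Delta_{\omega_{0,b}}\varphi|_{X_b}$ directly, after which the Green's function representation handles all fibre dimensions uniformly. Your route is more elementary and avoids both the case split and any use of $\omega_{SPR}$ (so the stated dependence of $C$ on $\omega_{SPR}$ is in fact unnecessary); the trade-off is that it leans on the previous proposition, which already contains the nontrivial input from (\ref{VFC}) and the trace estimate.
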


\begin{proof}
Let $\varphi_{avg}$ be a smooth function on $B\backslash S' \times [0,T)$ defined by
\begin{equation*}
    \varphi_{avg} = \frac{1}{\Vol(X_b,\omega_{0,b})} \int_{X_b} \varphi \omega_{0,b}^{n-m}.
\end{equation*}
Set $Q_2 = \frac{\varphi - f^* \varphi_{avg}}{E(t)}$, and note that 
\begin{equation*}
    E(t)^{-1}\omega(t)|_{X_b} = \omega_{0,b} + i\d\dbar Q_2|_{X_b}
\end{equation*}
satisfies the following equation
\begin{equation*}
    (\omega_{0,b} + i\d\dbar Q_2|_{X_b})^{n-m} = E(t)^{-(n-m)} \omega(t)|_{X_b}^{n-m} = G_b \omega_{0,b}^{n-m}
\end{equation*}
for some function $G_b > 0$. Since, $\int_{X_b} Q_2 \omega_{0,b}^{n-m}= 0$, we can apply Yau's $L^\infty$-estimate to get
\begin{equation*}
    \sup_{X_b \times [0,T)} |Q_2| \leq C_b,
\end{equation*}
where $C_b = C_b(n,m, T, C_{Sob}(\omega_{0,b}), C_{Poin}(\omega_{0,b}), \sup_{X_b \times [0,T)} G_b)$, \cite{Y78}, \cite[Thm. 3]{PSS12}, and $C_{Poin}(\omega_{0,b})$ and $C_{Sob}(\omega_{0,b})$ denote the Poincare and Sobolev constants of $(X_b, \omega_{0,b})$. The result will follow from a uniform bound on the constants $\sup_{X_b \times [0,T)} G_b$, $C_{Poin}(\omega_{0,b})$, and $C_{Sob}(\omega_{0,b})$ for all $b \in K'$. Firstly, note that on $K \times [0,T)$,
\begin{align*}
    \frac{\omega(t)|_{X_b}^{n-m}}{\omega_{0,b}^{n-m}} &= \frac{\omega(t)^{n-m} \wedge f^* \eta^m}{\omega_0^{n-m} \wedge f^* \eta ^m}\\
    &=\frac{\omega(t)^{n-m} \wedge f^* \eta^m}{\omega(t)^n} \frac{\omega(t)^n}{\omega_0^{n-m} \wedge f^*\eta^m}\\
    &\leq C (\tr_{\omega(t)} f^* \eta)^m \frac{\omega(t)^n}{\Omega}\\
    &\leq C E(t)^{n-m}
\end{align*}
where we use the AM-GM inequality in the second last step, and the last step holds by (\ref{VFC}). So, $G_b \leq C = C(n,m,T,\omega_0, f^* \eta, C_{K})$. On the other hand, the argument for the uniform bounds on the Sobolev and Poincare constants is similar to that in \cite{T10, ZZ20, ST07}, so we will highlight the main points.\\
First, we assume that the fibre is not $1$-dimensional, i.e. $n-m > 1$. The Sobolev constants are bounded independently of $b \in B \backslash S'$ in \cite[Lem. 3.2]{T10}. That is, there exists a uniform constant $C > 0$ for all $b \in B \backslash S'$ such that
\begin{equation*}
    \left(\int_{X_b} |v|^{\frac{2(n-m)}{n-m-1}} \omega_{0,b}^{n-m} \right)^{\frac{n-m-1}{n-m}} \leq C \int_{X_b} (|\nabla v|_{\omega_{0,b}}^2 + |v|^2) \omega_{0,b}^{n-m}
\end{equation*}
for all $v \in C^\infty(X_b)$.\\
On the other hand, the Poincare constants can be bounded independently of $b \in K'$ by the argument of \cite[Lem. 3.4]{T10},(cf.\cite{T07},\cite{LY86}). That is, there exists a uniform constant $C = C(K') > 0 $ such that for all $b \in K'$ we have
\begin{equation*}
    \int_{X_b} |v|^2 \omega_{0,b}^{n-m} \leq C \int_{X_b} |\nabla v|_{\omega_{0,b}}^2 \omega_{0,b}^{n-m}
\end{equation*}
for all $v \in C^\infty(X_b)$ with $\int_{X_b} v \omega_{0,b}^{n-m} = 0$.\\
Now as in \cite{ZZ20, ST07}, if the fibre is $1$-dimensional, i.e. $n-m =1$, if we let $G_{\omega_{SPR,b}}$ be the Green's function with respect to $\omega_{SPR,b}$ and set $\underline{G}_b = \inf_{X_b \times X_b} G_{\omega_{SPR,b}}(\cdot,\cdot)$, then for any $x \in X_b$ Green's formula will yield
\begin{align*}
    \left|\varphi(x) - \frac{1}{\Vol(\omega_{SPR,b})}\int_{X_b} \varphi(y) \omega_{SPR,b}(y)\right| &= \left|\int_{X_b} (\Delta_{\omega_{SPR,b}} \varphi)(y) (G_{\omega_{SPR,b}}(x,y) - \underline{G}_b)\omega_{SPR,b}(y)\right|\\
    &\leq \sup_{X_b} |\Delta_{\omega_{SPR,b}} \varphi| \int_{X_b} (G_{\omega_{SPR,b}}(x,y) - \underline{G}_b) \omega_{SPR,b}.
\end{align*}
We note that
\begin{equation*}
    |\Delta_{\omega_{SPR,b}} \varphi| = |\Delta_{\omega_{0,b}} \varphi| \frac{\omega_{0,b}}{\omega_{SPR,b}}
\end{equation*}
and both terms can be bounded as follows. Firstly,
\begin{equation*}
    0 < E(t) + \Delta_{\omega_{0,b}} \varphi = \tr_{\omega_{0,b}} (\omega(t)|_{X_b}) = \frac{\omega(t)|_{X_b}}{\omega_{0,b}} \leq C \frac{\omega_{REF}(t)|_{X_b}}{\omega_{0,b}} \leq CE(t),
\end{equation*}
so that $|\Delta_{\omega_{0,b}} \varphi| \leq CE(t)$ for a uniform constant $C>0$ for all $b \in K'$, and 
\begin{equation*}
    \frac{\omega_{0,b}}{\omega_{SPR,b}} = \frac{\omega_0 \wedge f^*\eta^{n-1}}{\omega_{SPR} \wedge f^*\eta^{n-1}} \leq C
\end{equation*}
on $K$ for some $C = C(\omega_0, \omega_{SPR}, f^*\eta, K)$. Furthermore, the Green's functions $G_{\omega_{SPR,b}}$ are uniformly bounded above for all $b \in K'$, while we can also bound $\underline{G}_b$ below uniformly for all $b \in K'$ using the same argument as \cite[Lem. 3.4]{ZZ20} by
\begin{equation*}
    \underline{G}_b \geq -C \frac{\Diam(X_b,\omega_{SPR,b})^2}{\Vol(X_b,\omega_{SPR,b})} \geq -C
\end{equation*}
for some uniform constant $C > 0$ for all $b \in K'$. Therefore, there exists a uniform $C > 0$ for all $b \in K'$ such that
\begin{equation*}
    \left|\varphi - \frac{1}{\Vol(\omega_{SPR,b})} \int_{X_b} \varphi \omega_{SPR,b}\right| \leq CE(t).
\end{equation*}
It follows that 
\begin{align*}
    \left|\varphi - \frac{1}{\Vol(\omega_{0,b})} \int_{X_b} \varphi \omega_{0,b}\right|
    &\leq \sup_{X_b} \varphi - \inf_{X_b} \varphi\\
    &= \sup_{X_b} \left(\varphi - \frac{1}{\Vol(\omega_{SPR,b})}\int_{X_b} \varphi \omega_{SPR,b}\right)\\
    &- \inf_{X_b} \left(\varphi - \frac{1}{\Vol(\omega_{SPR,b})}\int_{X_b} \varphi \omega_{SPR,b} \right) \leq CE(t).
\end{align*}
\end{proof}

\begin{proposition}
If (\ref{KRF}) satisfies (\ref{VFC}) then for any compact set $K \subseteq X \backslash S$ there exists a $C = C(n,m,T,\omega_0,f^*\eta, \omega_{SPR}, K, C_K) > 0$ such that on $K \times [0,T)$ we have
\begin{equation*}
    |u(t)-f^*u_{avg}(t)| \leq CE(t),
\end{equation*}
where $u_{avg}$ is the fibre-wise average of $u$ with respect to $\omega_{0,b}$.
\end{proposition}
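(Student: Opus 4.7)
The plan is to reduce this fibre-wise $C^0$ estimate for $u$ to the corresponding one for $\varphi$ proved in Proposition \ref{potential_convergence}, together with the uniform bound on $\d_t \varphi$ from Proposition \ref{dvarphi_dt_bound}. The key observation is that the fibre measure $\omega_{0,b}^{n-m}$ is independent of $t$, so fibre-wise averaging commutes with $\d_t$. Consequently, from $u = (1-e^{t-T}) \d_t \varphi + \varphi$ one obtains
\[
u_{avg}(b,t) = (1-e^{t-T}) \frac{\d \varphi_{avg}}{\d t}(b,t) + \varphi_{avg}(b,t),
\]
so that
\[
u - f^* u_{avg} = (1-e^{t-T})\left(\frac{\d \varphi}{\d t} - f^*\frac{\d \varphi_{avg}}{\d t}\right) + (\varphi - f^*\varphi_{avg}).
\]

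The plan is then to estimate the two terms separately. The second is bounded by $C E(t)$ on $K \times [0,T)$ directly by Proposition \ref{potential_convergence}. For the first, I would enlarge $K$ to $\tilde{K} = f^{-1}(f(K))$, which is still compact in $X \backslash S$ since $f$ is proper and $K \cap S = \emptyset$ forces $f(K) \cap S' = \emptyset$. Applying Proposition \ref{dvarphi_dt_bound} to $\tilde{K}$ yields $|\d_t \varphi| \leq C$ on $\tilde{K} \times [0,T)$, and since each fibre $X_b$ with $b \in f(K)$ lies inside $\tilde{K}$, the fibre-wise average $f^*(\d_t \varphi)_{avg}$ inherits the same bound on $K$. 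Hence $|\d_t \varphi - f^*(\d_t \varphi)_{avg}| \leq 2C$ on $K \times [0,T)$.

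To conclude, I would use the elementary observation that $1 - e^{t-T}$ and $E(t)$ are comparable on $[0,T]$: a direct calculation gives $(1-e^{t-T})/E(t) = (e^T-1) e^{t-T}$, which lies in $[1 - e^{-T},\, e^T - 1]$, so in particular $1 - e^{t-T} \leq (e^T-1) E(t)$. Combining these ingredients yields the desired $CE(t)$ bound. The only real work is verifying that the enlarged set $\tilde{K}$ retains the (\ref{VFC}) hypothesis with a constant depending only on $K$ and the fibration, so the final constant may be written as $C = C(n,m,T,\omega_0,f^*\eta,\omega_{SPR},K,C_K)$ after absorbing $C_{\tilde{K}}$ into data determined by $K$; this is a mild book-keeping step rather than a genuine obstacle, since all of the analytic content was already established in the two preceding propositions.
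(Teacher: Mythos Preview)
Your proposal is correct and essentially identical to the paper's proof. The paper writes the triangle inequality $|u - f^*u_{avg}| \leq |u - \varphi| + |\varphi - f^*\varphi_{avg}| + |f^*\varphi_{avg} - f^*u_{avg}|$ and bounds each piece using $|\d_t\varphi| \leq C$ and Proposition~\ref{potential_convergence}; since $u - \varphi = (1-e^{t-T})\d_t\varphi$ and $f^*\varphi_{avg} - f^*u_{avg} = -(1-e^{t-T})f^*(\d_t\varphi)_{avg}$, this is exactly your two-term decomposition regrouped, and you are in fact more careful than the paper in making explicit the passage to $\tilde{K} = f^{-1}(f(K))$ needed to control the fibre average of $\d_t\varphi$.
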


\begin{proof}
We apply $\left|\frac{\d\varphi}{\d t}\right| \leq C$ and Proposition \ref{potential_convergence} to the following triangle inequality
\begin{equation*}
    |u-f^* u_{avg}| \leq |u-\varphi| + |\varphi - f^* \varphi_{avg}| + |f^* \varphi_{avg} - f^* u_{avg}| \leq CE(t).
\end{equation*}
\end{proof}

\noindent We now come to another main theorem.

\begin{theorem}\label{C0_varphi_estimate}
Assume the (\ref{KRF}) satisfies (\ref{VFC}). For any compact set $K \subseteq X \backslash S$ there exists a real function $h_K = h_K(n,m,T,\omega_0,f^*\eta,\rho_{SPR},f^*\rho_B,U_X)$, where $h_K:[0,T) \to [0,\infty)$ is continuous, strictly decreasing and satisfies $h_K \to 0$ as $t \to T$ such that, on $K \times [0,T)$, we have
\begin{equation*}
    \lambda \inf_{X \backslash S} (\rho_{SPR} - f^*\rho_B) - h_K(t) \leq \varphi(t) - f^*\rho_B \leq h_K(t) + \lambda \sup_{X \backslash S} (\rho_{SPR} - f^*\rho_B).
\end{equation*}
Moreover, there exists a $h_K:[0,T) \to [0,\infty)$, where $h_K = h_K(n,m,T,\omega_0,f^*\eta,\rho_{SPR},f^*\rho'_B,U_X)$, such that on $K \times [0,T)$
\begin{equation*}
    \lambda \inf_{X \backslash S} \rho_{SPR} - h_K(t) \leq \varphi(t) - (1-e^{-T}) f^*\rho_B' \leq h_K(t) + \lambda \sup_{X \backslash S} \rho_{SPR}.
\end{equation*}
Furthermore, if (\ref{SKE}) holds then there exists a $h_K: [0,T) \to [0,\infty)$, where $h_K = h_K(n,m,T,\omega_0,f^*\eta,\rho_{SKE},f^*\rho_B, U_X)$, such that on $K \times [0,T)$
\begin{equation*}
    \lambda \inf_{X \backslash S} (- f^*\rho_B) - h_K(t) \leq \varphi(t) - f^*\rho_B \leq h_K(t) + \lambda \sup_{X \backslash S} (- f^*\rho_B).
\end{equation*}
Finally, if (\ref{SKE}) holds then there exists a $h_K: [0,T) \to [0,\infty)$, where $h_K = h_K(n,m,T,\omega_0,f^*\eta,\rho_{SKE},f^*\rho'_B, U_X)$, such that on $K \times [0,T)$
\begin{equation*}
     - h_K(t) \leq \varphi(t) - (1-e^{-T})f^*\rho'_B \leq h_K(t).
\end{equation*}
\end{theorem}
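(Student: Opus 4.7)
The plan is to apply the parabolic maximum principle four times, once for each of the four estimates, with barriers tailored to the corresponding base Monge-Amp\`ere equations. I describe the argument for the first bound in detail and then indicate the modifications for the remaining three.

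\textbf{Key identity and reduction.} From $G=f^*G'$, the definition $G=\Omega/(\binom{n}{m}\omega_{SPR}^{n-m}\wedge f^*\eta^m)$, and the base equation $(\eta+i\d\dbar\rho_B)^m=G'e^{\rho_B}\eta^m$, one deduces
\begin{equation*}
\Omega=\binom{n}{m}\,e^{-f^*\rho_B}\,\omega_{SPR}^{n-m}\wedge f^*\omega_B^m \quad \text{on } X\setminus S.
\end{equation*}
Introducing the ansatz $\psi:=\varphi-(1-E(t))f^*\rho_B-E(t)\rho_{SPR}$ and using $i\d\dbar f^*\rho_B=f^*(\omega_B-\eta)$, $i\d\dbar\rho_{SPR}=\omega_{SPR}-\omega_0$, and the algebraic identity $E(t)+E'(t)=-\lambda$, one obtains
\begin{equation*}
\omega(t)=E(t)\omega_{SPR}+(1-E(t))f^*\omega_B+i\d\dbar\psi,
\end{equation*}
and the reduced flow
\begin{equation*}
\partial_t\psi=\log\frac{E(t)^{-(n-m)}\omega(t)^n}{\binom{n}{m}\omega_{SPR}^{n-m}\wedge f^*\omega_B^m}-\psi+\lambda(\rho_{SPR}-f^*\rho_B).
\end{equation*}
A multinomial expansion, using that $f^*\omega_B$ has rank at most $m$, shows that $E(t)^{-(n-m)}(E(t)\omega_{SPR}+(1-E(t))f^*\omega_B)^n=\binom{n}{m}(1-E(t))^m\omega_{SPR}^{n-m}\wedge f^*\omega_B^m+O(E(t))$ uniformly on compact $K\subseteq X\setminus S$.

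\textbf{Maximum principle.} For the upper bound, set $C_+:=\lambda\sup_{X\setminus S}(\rho_{SPR}-f^*\rho_B)$ and consider
\begin{equation*}
\Psi_+(x,t):=\psi(x,t)-C_+-h_K^+(t)-A\chi(x)^{-q},
\end{equation*}
with $\chi$ the cut-off of Proposition~\ref{trace_lemma}, $A,q>0$ constants, and $h_K^+:[0,T)\to[0,\infty)$ continuous, strictly decreasing, with $h_K^+(T)=0$, chosen so that $(h_K^+)'+h_K^+$ dominates both the $O(E(t))$ error from the log expansion and the cut-off contribution controlled via Proposition~\ref{trace_lemma}. Because $-A\chi^{-q}\to-\infty$ on $S$, the supremum of $\Psi_+$ on $X\times[0,T)$ is attained in the interior of $f^{-1}(U')$; there $i\d\dbar\psi$ is controlled by $A\,i\d\dbar\chi^{-q}$, and the parabolic inequality $\partial_t\Psi_+\geq 0$ at the maximum combined with the log bound forces $\Psi_+\leq 0$ everywhere, while uniform bounds on $\varphi$ and $\partial_t\varphi$ from Proposition~\ref{dvarphi_dt_bound} handle the initial data. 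Converting back via $\varphi-f^*\rho_B=\psi+E(t)(\rho_{SPR}-f^*\rho_B)$ produces the uniformly bounded errors $E(t)(\rho_{SPR}-f^*\rho_B)$ and $A\chi^{-q}|_K$, both absorbable into a revised $h_K$. The lower bound follows symmetrically using $\lambda\inf_{X\setminus S}(\rho_{SPR}-f^*\rho_B)$.

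\textbf{Remaining cases and main obstacle.} Parts 2--4 are obtained from the same template with modified identities. Part 2 uses $\Omega=\binom{n}{m}(1-e^{-T})^m e^{-f^*\rho_B'}\omega_{SPR}^{n-m}\wedge f^*\omega_B'^m$ and $\psi=\varphi-(1-E(t))(1-e^{-T})f^*\rho_B'-E(t)\rho_{SPR}$; the identity $\lambda(1-e^{-T})=e^{-T}$ kills the $f^*\rho_B'$-coefficient of the correction, leaving $\lambda\rho_{SPR}$. Part 3 uses (\ref{SKE}) with $\Omega=\binom{n}{m}e^{\lambda\rho_{SKE}-f^*\rho_B}\omega_{SKE}^{n-m}\wedge f^*\omega_B^m$ and $\psi=\varphi-(1-E(t))f^*\rho_B-E(t)\rho_{SKE}$; the cancellation $\lambda+E(t)+E'(t)=0$ eliminates $\rho_{SKE}$ from the correction, leaving $-\lambda f^*\rho_B$. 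Part 4 combines both cancellations so that the correction vanishes identically, giving the stated estimate $|\varphi-(1-e^{-T})f^*\rho_B'|\leq h_K(t)$ with no $\sup/\inf$ term. The principal obstacle is the cut-off bookkeeping: $A$ and $q$ must be balanced so that $-A\chi^{-q}$ dominates both the singular behaviour of $\rho_{SPR}$ (resp.\ $\rho_{SKE}$) near $S$ and the $\tr_{\omega(t)}$-estimates of Proposition~\ref{trace_lemma}, while the shape of $h_K$ and the resulting constants remain compatible with the data listed in the theorem. Under (\ref{SKE}), the (small-order) logarithmic pole of $\rho_{SKE}$ along $S$ is controlled precisely by the integrability $e^{-\lambda\rho_{SKE}}\in L^p(X,\Omega)$ built into the hypothesis.
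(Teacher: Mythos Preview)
Your algebraic reduction is correct and matches the paper: the ansatz $\psi=\varphi-(1-E)f^*\rho_B-E\rho_{SPR}$, its evolution equation, and the cancellations you identify in parts 2--4 are exactly what is used. The gap is in your barrier for the maximum principle. The function $-A\chi^{-q}$ does force the maximum of $\Psi_+$ into $U$, but at that point you only obtain $\omega(t)\le E\omega_{SPR}+(1-E)f^*\omega_B+A\,i\d\dbar\chi^{-q}$, and the last term is \emph{not} controlled: since $i\d\dbar\chi^{-q}=-q\chi^{-q-1}i\d\dbar\chi+q(q+1)\chi^{-q-2}i\d\chi\wedge\dbar\chi$, its eigenvalues blow up near $\partial U$, so the resulting upper bound on $\omega(t)^n$ is vacuous unless you know the maximum sits on $K$. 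Proposition~\ref{trace_lemma} bounds $\tr_{\omega(t)}\alpha$ for \emph{fixed} forms $\alpha$, which is not the quantity appearing here and cannot help. Even if the maximum did land on $K$, the fixed constant $A\chi^{-q}|_K=A$ survives in the conversion back to $\varphi-f^*\rho_B$, so $h_K\not\to0$; letting $A\to0$ only worsens the localisation problem.

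The paper avoids this by a quite different barrier: one takes defining sections $s_j$ of ample divisors $D_j\subset B$ containing $S'$, with Hermitian metric chosen so that $-i\d\dbar\log|s_j|^2_{h_{D_j}}=\mu_j\omega_B$, and uses $\e\log|s_j|^2_{h_{D_j}}$. This barrier still diverges to $-\infty$ along $S$, but now its $i\d\dbar$ contribution is exactly $\mu_j\e\,f^*\omega_B$, a globally bounded form that simply shifts the coefficient of $f^*\omega_B$ in the reference by $O(\e)$. The cut-off $\chi_\e$ is used \emph{only} to replace $\rho_{SPR}$ by a globally defined truncation $\rho_{SPR,\e}$, after which one argues separately for max points inside or outside a tube around $D_j$; the function $h_K$ then arises by inverting $\e\mapsto T_\e$. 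Your cut-off bookkeeping cannot be made to play both roles at once.
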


\begin{proof}
We can extend the argument \cite[Thm 5.1, Prop 5.4]{ST12} of Song and Tian to this setting. Let $L'$ be an ample $\Q$-line bundle on $B$ such that $D = f^*L'$ and choose an ample divisor $D'$ on $B$ such that $[D'] = \mu'L'$ for some sufficiently large integer $\mu'$, and we have $S' \subseteq D'$. In fact, we can choose finitely many such divisors $D_j$ for $j = 1, \dots, M$, such that $\bigcap_{j = 1}^M D_j = S'$. Fix a divisor $D_j$, let $s_j$ denote a defining section of $D_j$ and set $h_{D_j} = (h_{FS})^{\mu_j/k} e^{-\mu_j\rho_B}$, where $h_{FS}$ is the Fubini Study metric on $\OO_B(1)$, to be a continuous Hermitian metric on $D_j$ so that
\begin{equation*}
    \Ric h_{D_j} = -i\d\dbar \log |s_j|^2_{h_{D_j}} = \mu_j\omega_B.
\end{equation*}
Set $B_{\eta}(D_j,r)$ to be the tubular neighbourhood around $D_j$ of radius $r > 0$ with respect to the metric $\eta$. As $s_j$ is the defining section of $D_j$ then $s_j$ vanishes along $D_j$ and $\log|s_j|_{h_{D_j}}^2 = -\infty$ on $D_j$, moreover, we can scale $s_j$ such that $\log |s_j|_{h_{D_j}}^2 \leq -1$ globally on $X$, where we identify $\log |s_j|_{h_{D_j}}^2$ with its pullback. As $\varphi$ and $f^*\rho_B$ are bounded globally on $X$, for any $0 < \e < 1$ we can choose a radius $r_\e > 0$ so that $r_\e \to 0$ as $\e \to 0$ and on $f^{-1}(B_{\eta}(D_j,r_\e))$ we have both
\begin{align}\label{tubular_nbhd_bound}
    \e \log |s_j|_{h_{D_j}}^2 &\leq -C_1 \leq -1 - (\varphi - (1-E(t))f^* \rho_B),\\
    \e \log |s_j|_{h_{D_j}}^2 &\leq -C_2 \leq -1 - ((1-E(t))f^* \rho_B - \varphi).\notag
\end{align}
Let $\chi_\e$ be a smooth cut-off function such that $0 \leq \chi_\e \leq 1$ on $B$ and
\begin{equation*}
\chi_\e = 1 \text{ on } B \backslash B_{\eta}(D_j,r_\e), \quad \text{and} \quad \chi_\e = 0 \text{ on } B_{\eta}(D_j,r_\e/2)
\end{equation*}
and let us use the notation
\begin{align*}
    \rho_{SPR,\e} = (f^*\chi_{\e}) \rho_{SPR}, \quad
    \omega_{SPR,\e} = \omega_0 + i\d\dbar \rho_{SPR,\e}.
\end{align*}
Next, we define two auxiliary functions $\varphi^{+}_{\e}, \varphi^{-}_{\e}$ for $\varphi$ by
\begin{align*}
    \varphi_{\e}^{+}(t) &= \varphi(t) - \left(\frac{e^{-t}-e^{-T}}{1-e^{-T}}\right) \rho_{SPR,\e} - \left(\frac{1-e^{-t}}{1-e^{-T}}\right) f^*\rho_{B} + \e \log|s_j|_{h_{D_j}}^2,\\
    \varphi_{\e}^{-}(t) &= \varphi(t) - \left(\frac{e^{-t}-e^{-T}}{1-e^{-T}}\right) \rho_{SPR,\e} - \left(\frac{1-e^{-t}}{1-e^{-T}}\right) f^*\rho_{B} - \e \log|s_j|_{h_{D_j}}^2.
\end{align*}
As a result, we can write
\begin{align*}
    \omega(t) &= \left(\frac{e^{-t}-e^{-T}}{1-e^{-T}}\right) \omega_{SPR, \e} + \left(\frac{1-e^{-t}}{1-e^{-T}} + \mu_j\e\right) f^* \omega_B+ i\d\dbar\varphi_{\e}^+(t),
\end{align*}
and
\begin{align*}
    \omega(t) &= \left(\frac{e^{-t}-e^{-T}}{1-e^{-T}}\right) \omega_{SPR, \e} + \left(\frac{1-e^{-t}}{1-e^{-T}} - \mu_j\e\right) f^* \omega_B+ i\d\dbar\varphi_{\e}^{-}(t).
\end{align*}
And furthermore, it follows from (\ref{base_space_CMA_1}) that
\begin{equation*}
     {n \choose m} \omega_{SPR}^{n-m} \wedge f^* \omega_B^m =  {n \choose m} \omega_{SPR}^{n-m} \wedge (G e^{f^*\rho_B} f^* \eta^{m}) = e^{f^* \rho_B} \Omega.
\end{equation*}
Therefore, we find that
\begin{align*}
    \frac{\d \varphi_{\e}^+}{\d t} = \log \frac{E(t)^{-(n-m)} \omega(t)^n}{{n \choose m} \omega_{SPR}^{n-m} \wedge f^* \omega_B^m} - \varphi_{\e}^{+} + \lambda \rho_{SPR,\e} - \lambda f^* \rho_B + \e \log |s_j|^2_{h_{D_j}},
\end{align*}
and
\begin{equation*}
    \frac{\d \varphi_{\e}^-}{\d t} = \log \frac{E(t)^{-(n-m)} \omega(t)^n}{{n \choose m} \omega_{SPR}^{n-m} \wedge f^* \omega_B^m} - \varphi_{\e}^{-} + \lambda \rho_{SPR,\e} - \lambda f^* \rho_B - \e \log |s_j|^2_{h_{D_j}}.
\end{equation*}
Choose $T^+_{\e}$ sufficiently large such that for all $t \geq T^+_{\e}$
\begin{align*}
    A_{\e}^{+}(t)&:= \sum_{i=0}^{m-1} E(t)^{m-i}(1-E(t)+\mu_j\e)^i \frac{{n\choose i}\omega_{SPR}^{n-i} \wedge f^* \omega_B^i}{{n\choose m} \omega_{SPR}^{n-m} \wedge f^*\omega_B^m}
\end{align*}
satisfies
\begin{equation*}
    A_{\e}^+(t) \leq \e
\end{equation*}
on $X \backslash f^{-1}(B_{\eta}(D_j,r_\e))$. Additionally, choose $T^-_{\e}$ sufficiently large such that for all $t \geq T^-_{\e}$
\begin{align*}
    A_{\e}^{-}(t) &:= \sum_{i=0}^{m-1} E(t)^{m-i}(1-E(t)-\mu_j\e)^i \frac{{n\choose i}\omega_{SPR}^{n-i} \wedge f^* \omega_B^i}{{n\choose m} \omega_{SPR}^{n-m} \wedge f^*\omega_B^m}
\end{align*}
satisfies
\begin{equation*}
    - \e \leq A^-_{\e}(t)
\end{equation*}
on $X \backslash f^{-1}(B_{\eta}(D_j,r_{\e}))$.\\
Moreover, we can choose $T'_{\e}$ sufficiently large such that for all $t \geq T'_{\e}$,
\begin{equation*}
    E(t) \sup_{f^{-1}(B_{\eta}(D_j,r_\e))}|\rho_{SPR,\e}| \leq 1,
\end{equation*}
and $T''_{\e}$ sufficiently large such that for all $t \geq T''_{\e}$ we have $E(t) \leq \e$.\\
Set $T_{\e} = \max\{T_{\e}^+, T_{\e}^{-}, T'_{\e}, T''_{\e}\}$. Let $(x^*,t^*)$ be a maximal point of $\varphi_{\e}^{+}$ on $X \times [T_\e,T)$ and assume that $x^* \in X \backslash f^{-1}(B_{\eta}(D_j, r_\e))$. By the maximum principle, and as $\omega_{SPR,\e}(x^*) = \omega_{SPR}(x^*)$, it follows that at $(x^*,t^*)$ we have
\begin{align*}
    \varphi_{\e}^{+}(x^*,t^*) &\leq \log \left((1-E(t^*)+\mu_j \e)^m + A^+_{\e}(x^*,t^*)\right)\\
    &+ \lambda \rho_{SPR}(x^*) - \lambda f^* \rho_B(x^*) + \e \log |s_j|^2_{h_{D_j}}(x^*)\\
    &\leq C \e + \lambda \sup_{X \backslash f^{-1}(B_{\eta}(D_j, r_\e))} (\rho_{SPR}-f^*\rho_B),
\end{align*}
where $C = C(m,\mu_j)$ does not depend on $\e$. On the other hand, if $x^* \in f^{-1}(B_{\eta}(D_j,r_\e))$ then by (\ref{tubular_nbhd_bound}) we have
\begin{align*}
    \varphi^+_{\e}(x^*,t^*) &= \varphi(x^*,t^*) - E(t^*) \rho_{SPR,\e}(x^*) - (1-E(t^*)) f^*\rho_{B}(x^*) + \e \log |s_j|^2_{h_{D_j}}(x^*)\\
    &\leq -1 + E(t^*) \sup_{f^{-1}(B_{\eta}(D_j,r_\e))} |\rho_{SPR,\e}|\\
    &\leq 0.
\end{align*}
We note that we can normalize $\rho_{SPR}$ or $\rho_B$ such that the supremum and infimum of $\rho_{SPR} - f^*\rho_B$ are positive and negative on $X \backslash S$, respectively. Thus
\begin{equation*}
    \varphi^+_{\e}(t) \leq C\e + \lambda \sup_{X\backslash S} (\rho_{SPR}-f^*\rho_B)
\end{equation*}
holds globally on $X \times [T_\e,T)$.\\
Similarly, let $(x_*,t_*)$ be a point at which $\varphi_{\e}^-(t)$ achieves a minimum on $X \times [T_\e,T)$ and take $x_* \in X \backslash f^{-1}(B_{\eta}(D_i,r_\e))$. It follows, as $\omega_{SPR,\e}(x_*) = \omega_{SPR}(x_*)$, that 
\begin{align*}
    \varphi^{-}_{\e}(x_*,t_*) &\geq \log\left((1-E(t_*)-\mu_j \e)^m + A_{\e}^{-}(x_*,t_*)\right)\\
    &+ \lambda \rho_{SPR}(x_*) - \lambda f^*\rho_{B}(x_*) - \e \log |s_j|^2_{h_{D_j}}(x_*) \\
    &\geq \log \left((1-E(t_*) - \mu_j \e)^m - \e\right)\\
    &+ \lambda \inf_{X\backslash f^{-1}(B_{\eta}(D_j,r_\e))} (\rho_{SPR} - f^*\rho_B).
\end{align*}
Observing that since $t_* \geq T_\e \geq T''_\e$ we have $E(t_*) \leq \e$, there exists a small $\e' > 0$ such that for all $0 < \e \leq \e'$ 
\begin{equation*}
    \varphi^{-}_{\e}(x_*,t_*) \geq -C \e + \lambda \inf_{X \backslash f^{-1}(B_{\eta}(D_j,r_{\e}))
    } (\rho_{SPR} - f^*\rho_B),
\end{equation*}
for some $C = C(m,\mu_j, \e')$ not dependent on $\e$. On the other hand, if $x_* \in f^{-1}(B_{\eta}(D_i,r_{\e}))$ then by (\ref{tubular_nbhd_bound})
\begin{align*}
    \varphi_{\e}^{-}(x_*,t_*) &= \varphi(x_*,t_*) - E(t_*) \rho_{SPR,\e}(x_*,t_*) - (1-E(t_*)) f^*\rho_{B}(x_*) - \e \log|s_j|_{h_{D_j}}^2(x_*)\\
    &\geq 1-E(t_*) \sup_{f^{-1}(B_{\eta}(D_j,r_\e))} |\rho_{SPR,\e}|\\
    &\geq 0.
\end{align*}
So on $X \times [T_\e,T)$ we have
\begin{equation*}
    \varphi_{\e}^{-}(t) \geq -C\e + \lambda  \inf_{X\backslash S} (\rho_{SPR} - f^*\rho_B).
\end{equation*}
Next, for any compact set $K \subseteq X \backslash S$ we can choose $\e_0$ sufficiently small such that $K \subseteq X \backslash f^{-1}(B_{\eta}(S',r_\e))$ for all $0 < \e \leq \e_0$ and there are finitely many compact sets $K_j$ such that  $K \subseteq \bigcup_{j=1}^M K_j$ and $K_j \subseteq X \backslash f^{-1}(B_{\eta}(D_j, r_{\e}))$ for all $0 < \e \leq \e_0$. Therefore, on $K_j$ and for all $0 < \e \leq \e_0$,  we have
\begin{align*}
    \varphi(T_\e) - f^*\rho_B &= \varphi_{\e}^{+}(T_\e) + \frac{e^{-T_\e}-e^{-T}}{1-e^{-T}}\rho_{SPR,\e} - \frac{e^{-T_\e}-e^{-T}}{1-e^{-T}} f^*\rho_B - \e \log |s_i|_{h_{D_i}}^2\\
    &\leq C_j \e + \lambda \sup_{X \backslash S} (\rho_{SPR}-f^*\rho_B)+ C_j\frac{e^{-T_\e}-e^{-T}}{1-e^{-T}}.
\end{align*}
Now note that, by construction, $T_\e = \max\{T^+_{\e},T^-_{\e},T'_{\e},T''_{\e}\}$ is a strictly increasing, continuous function of $\e$ that approaches $T$ as $\e$ approaches $0$. This means there exists a strictly decreasing, continuous, bijective function $h: [T_{\e_0}, T) \to (0,\e_0]$ such that $h(T_{\e}) = \e$ and clearly $h(t) \to 0$ as $t \to T$. So, we can take the maximum over all $j$ in the previous inequality to define a strictly decreasing, continuous function $h_K$ depending on the compact set $K$, where $h_K(t) \to 0$ as $t \to T$, such that 
\begin{equation*}
    \varphi(t) - f^*\rho_B \leq h_K(t) + \lambda \sup_{X \backslash S} (\rho_{SPR}-f^*\rho_B)
\end{equation*}
holds on $K \times [T_{\e_0},T)$, and the lower bound is similar.\\
This completes the main portion of the proof. We now comment on the minor adjustments needed for the other cases in the theorem. For the $C^0$-estimate involving $\varphi(t)-(1-e^{-T})f^*\rho_B'$, one first needs to set $h_{D_j} = (h_{FS})^{\mu_j/k} e^{-\mu_j(1-e^{-T})\rho_B'}$ so that
\begin{equation*}
    \Ric h_{D_j} = -i\d\dbar \log |s_j|^2_{h_{D_j}} = \mu_j(1-e^{-T})\omega_B',
\end{equation*}
and choose the radius $r_\e > 0$ such that both
\begin{align*}
    \e \log |s_j|_{h_{D_j}}^2 &\leq -C_1 \leq -1 - (\varphi - (1-E(t)) (1-e^{-T}) f^*\rho_B'),\\
    \e \log |s_j|_{h_{D_j}}^2 &\leq -C_2 \leq -1 - ((1-E(t)) (1-e^{-T}) f^*\rho_B' - \varphi)
\end{align*}
hold on $f^{-1}(B_{\eta}(D_j,r_\e))$. Additionally, one instead takes the auxiliary functions
\begin{align*}
    \varphi_{\e}^{+}(t) &= \varphi(t) - \left(\frac{e^{-t}-e^{-T}}{1-e^{-T}}\right) \rho_{SPR,\e} - \left(\frac{1-e^{-t}}{1-e^{-T}}\right) (1-e^{-T})f^*\rho'_{B} + \e \log|s_j|_{h_{D_j}}^2,\\
    \varphi_{\e}^{-}(t) &= \varphi(t) - \left(\frac{e^{-t}-e^{-T}}{1-e^{-T}}\right) \rho_{SPR,\e} - \left(\frac{1-e^{-t}}{1-e^{-T}}\right) (1-e^{-T})f^*\rho'_{B} - \e \log|s_j|_{h_{D_j}}^2.
\end{align*}
Their evolution equations can be calculated to be
\begin{align*}
    \frac{\d \varphi_{\e}^+}{\d t} = \log \frac{E(t)^{-(n-m)} \omega(t)^n}{{n \choose m} \omega_{SPR}^{n-m} \wedge ((1-e^{-T})f^* \omega'_B)^m} - \varphi_{\e}^{+} + \lambda \rho_{SPR,\e} + \e \log |s_j|^2_{h_{D_j}},
\end{align*}
and
\begin{equation*}
    \frac{\d \varphi_{\e}^-}{\d t} = \log \frac{E(t)^{-(n-m)} \omega(t)^n}{{n \choose m} \omega_{SPR}^{n-m} \wedge ((1-e^{-T})f^* \omega'_B)^m} - \varphi_{\e}^{-} + \lambda \rho_{SPR,\e} - \e \log |s_j|^2_{h_{D_j}},
\end{equation*}
since
\begin{equation*}
    {n \choose m} \omega_{SPR}^{n-m} \wedge ((1-e^{-T})f^*\omega'_B)^m = e^{f^*\rho_B'} \Omega.
\end{equation*}
The remaining arguments will follow as before.\\
Finally, consider the second case involving $\rho_{SKE}$. We note that the volume form $\Omega$ satisfies
\begin{equation*}
    {n\choose m} \omega_{SKE}^{n-m} \wedge ((1-e^{-T}) f^*\omega'_B)^m = e^{f^*\rho_B'-\lambda \rho_{SKE}}\Omega,
\end{equation*}
and the auxiliary functions
\begin{align*}
    \varphi_{\e}^{+}(t) &= \varphi(t) - \left(\frac{e^{-t}-e^{-T}}{1-e^{-T}}\right) \rho_{SKE,\e} - \left(\frac{1-e^{-t}}{1-e^{-T}} \right) (1-e^{-T})f^*\rho'_{B} + \e \log|s_j|_{h_{D_j}}^2,\\
    \varphi_{\e}^{-}(t) &= \varphi(t) - \left(\frac{e^{-t}-e^{-T}}{1-e^{-T}}\right) \rho_{SKE,\e} - \left(\frac{1-e^{-t}}{1-e^{-T}}\right) (1-e^{-T})f^*\rho'_{B} - \e \log|s_j|_{h_{D_j}}^2.
\end{align*}
have evolution equations
\begin{align*}
    \frac{\d \varphi_{\e}^+}{\d t} &= \log \frac{E(t)^{-(n-m)} \omega(t)^n}{{n \choose m} \omega_{SKE}^{n-m} \wedge ((1-e^{-T})f^* \omega'_B)^m} - \varphi_{\e}^{+} + \e \log |s_j|^2_{h_{D_j}},\\
    \frac{\d \varphi_{\e}^-}{\d t} &= \log \frac{E(t)^{-(n-m)} \omega(t)^n}{{n \choose m} \omega_{SKE}^{n-m} \wedge ((1-e^{-T})f^* \omega'_B)^m} - \varphi_{\e}^{-} - \e \log |s_j|^2_{h_{D_j}}.
\end{align*}
As such we will get convergence.
\end{proof}

\noindent It is immediate that,

\begin{corollary}
Assume the (\ref{KRF}) satisfies (\ref{VFC}). If $\rho_{SPR} - f^*\rho_B = 0$ on $X \backslash S$, then 
\begin{equation*}
\norm{\varphi(t) - f^*\rho_B}_{C^0_{loc}(X\backslash S)} \to 0.
\end{equation*}
If $\rho_{SPR} = 0$ on $X \backslash S$ then 
\begin{equation*}
\norm{\varphi(t) - (1-e^{-T}) f^*\rho'_B}_{C^0_{loc}(X \backslash S)} \to 0.
\end{equation*}
Suppose that (\ref{SKE}) holds. If $f^* \rho_B = 0$ on $X \backslash S$ then 
\begin{equation*}
\norm{\varphi(t) - f^*\rho_B}_{C^0_{loc}(X \backslash S)} \to 0.
\end{equation*}
Suppose that (\ref{SKE}) holds. Then 
\begin{equation*}
\norm{\varphi(t) - (1-e^{-T})f^*\rho'_B}_{C^0_{loc}(X \backslash S)} \to 0.
\end{equation*}
\end{corollary}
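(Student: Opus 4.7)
The plan is to obtain each of the four convergence statements as an immediate consequence of the corresponding inequality from Theorem \ref{C0_varphi_estimate} by substituting the given vanishing hypothesis so that the extra $\inf$ and $\sup$ terms disappear. Recall that $\|\cdot\|_{C^0_{loc}(X \backslash S)} \to 0$ is by definition uniform convergence on every compact $K \subseteq X \backslash S$, which matches precisely the quantification in Theorem \ref{C0_varphi_estimate}.

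For the first statement, I would fix an arbitrary compact $K \subseteq X \backslash S$ and invoke the first inequality of Theorem \ref{C0_varphi_estimate}. The hypothesis $\rho_{SPR} - f^*\rho_B \equiv 0$ on $X \backslash S$ forces both $\lambda \inf_{X \backslash S}(\rho_{SPR} - f^*\rho_B) = 0$ and $\lambda \sup_{X \backslash S}(\rho_{SPR} - f^*\rho_B) = 0$, collapsing the estimate to
\begin{equation*}
-h_K(t) \leq \varphi(t) - f^*\rho_B \leq h_K(t) \qquad \text{on } K \times [0,T).
\end{equation*}
Since $h_K(t) \to 0$ as $t \to T$, this yields $\|\varphi(t) - f^*\rho_B\|_{C^0(K)} \to 0$, and because $K$ was arbitrary, the $C^0_{loc}(X \backslash S)$-convergence follows.

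The remaining three statements follow by the identical template applied to the second, third, and fourth inequalities of Theorem \ref{C0_varphi_estimate}, respectively. Setting $\rho_{SPR} = 0$ kills the sup/inf terms of the second estimate; imposing $f^*\rho_B = 0$ under (\ref{SKE}) kills those of the third estimate; and the fourth estimate already has no sup/inf terms, so the convergence $\|\varphi(t) - (1-e^{-T})f^*\rho_B'\|_{C^0_{loc}(X \backslash S)} \to 0$ under (\ref{SKE}) is truly immediate from the bound by $h_K(t)$.

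Because the heavy lifting has been done already in establishing the vanishing rate $h_K(t) \to 0$ inside Theorem \ref{C0_varphi_estimate}, there is no genuine obstacle; the only minor point requiring attention is confirming that the normalizations used in the proof of Theorem \ref{C0_varphi_estimate} (where $\rho_{SPR}$ or $\rho_B$ may have been shifted by constants to make $\sup$ positive and $\inf$ negative on $X \backslash S$) are compatible with the stated vanishing hypotheses, but in each case the identity of two potentials determines both up to a common additive constant, which at most shifts the limit by that same constant and does not affect convergence of the difference.
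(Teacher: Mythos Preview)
Your proposal is correct and matches the paper's approach exactly: the paper simply states ``It is immediate that,'' and gives no further argument, since the corollary follows directly from Theorem~\ref{C0_varphi_estimate} by plugging in the vanishing hypotheses, precisely as you describe. Your added remark about the normalization of $\rho_{SPR}$ and $\rho_B$ is a reasonable sanity check but is not needed, since under each hypothesis the relevant sup and inf are already zero before any normalization.
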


\noindent Now, we would like more control on the function $h_K$ in the proof of Theorem \ref{C0_varphi_estimate}.

\begin{lemma}\label{lipschitz_lemma}
If the function $h(T_\e) = \e$ is Lipschitz then there exists a constant $C = C(n,m,T,\omega_0,f^*\eta,\rho_{SPR},f^*\rho_B,U_X) > 0$ such that on $[0,T)$
\begin{equation*}
h_K(t) \leq C E(t).
\end{equation*}
Moreover, the dependence of the constant $C$ on $\rho_{SPR}$ and $f^*\rho_B$ should be appropriately replaced with $\rho_{SKE}$ or $f^*\rho_B'$ if necessary.
\end{lemma}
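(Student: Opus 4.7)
The plan is to read off the explicit form of $h_K$ from the construction in the proof of Theorem \ref{C0_varphi_estimate} and then convert the Lipschitz hypothesis on $h$ into the desired $E(t)$-rate by comparing $(T-t)$ with $E(t)$. The key observation is that the bound obtained at $t = T_\e$ in that proof had exactly two error terms: one of order $\e$ coming from $C_j\e$ (and from the term $\log((1-E(t_*)-\mu_j\e)^m - \e)$), and one of order $E(T_\e)$ arising from the difference $\frac{e^{-T_\e}-e^{-T}}{1-e^{-T}} = E(T_\e)$ between the reference class and its limit. Taking the maximum over $j = 1,\ldots, M$ and over the two inequalities (upper/lower), the function $h_K$ is built to satisfy $h_K(T_\e) = C(\e + E(T_\e))$ for some $C = C(n,m,T,\omega_0,f^*\eta,\rho_{SPR},f^*\rho_B,U_X)$, and, using $h(T_\e)=\e$, we can rewrite this intrinsically as
\begin{equation*}
    h_K(t) \leq C\bigl(h(t) + E(t)\bigr) \quad \text{for } t \in [T_{\e_0}, T).
\end{equation*}

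Next, I would exploit the Lipschitz hypothesis on $h$. Since $h$ is a strictly decreasing bijection from $[T_{\e_0},T)$ onto $(0,\e_0]$ with $h(t) \to 0$ as $t \to T$, the Lipschitz property gives $h(t) = |h(t) - \lim_{s \to T} h(s)| \leq L(T-t)$ on $[T_{\e_0},T)$ for some constant $L$. I would then compare $(T-t)$ with $E(t)$ via the mean value theorem: writing $E(t) = -E'(\xi)(T-t) = \tfrac{e^{-\xi}}{1-e^{-T}}(T-t)$ for some $\xi \in [t,T]$, we immediately get
\begin{equation*}
    E(t) \geq \frac{e^{-T}}{1-e^{-T}}(T-t) \qquad \text{for all } t \in [0,T),
\end{equation*}
so $(T-t) \leq C_T\, E(t)$ with $C_T = (1-e^{-T})e^{T}$. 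Combining these estimates yields $h(t) \leq LC_T\, E(t)$, and hence $h_K(t) \leq C'E(t)$ on $[T_{\e_0},T)$.

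It remains to handle the interval $[0,T_{\e_0}]$. There $h_K$ is continuous (by its construction in Theorem \ref{C0_varphi_estimate}) and thus bounded above by some constant $M = M(K,\e_0)$, while $E(t) \geq E(T_{\e_0}) > 0$. Consequently $h_K(t) \leq \tfrac{M}{E(T_{\e_0})}\,E(t)$ on $[0,T_{\e_0}]$. Taking the larger of the two constants produces the desired uniform bound $h_K(t) \leq C\,E(t)$ on all of $[0,T)$, with the dependence of $C$ on the listed parameters. No genuine obstacle arises; the only subtlety is bookkeeping the constants produced in the Theorem \ref{C0_varphi_estimate} construction, and the substitutions $\rho_{SPR} \leftrightarrow \rho_{SKE}$ and $f^*\rho_B \leftrightarrow f^*\rho_B'$ go through identically because those cases merely relabel the auxiliary functions $\varphi_\e^\pm$ in that proof without altering the structure $h_K(T_\e) = C(\e + E(T_\e))$.
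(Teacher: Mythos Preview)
Your proof is correct and follows essentially the same approach as the paper: use the Lipschitz hypothesis on $h$ together with $h(t)\to 0$ as $t\to T$ to get $h(t)\leq C(T-t)$, then invoke $(T-t)\sim E(t)$ to conclude. Your version is more explicit in (i) spelling out the structure $h_K(t)\leq C(h(t)+E(t))$ from the construction in Theorem~\ref{C0_varphi_estimate}, (ii) justifying $(T-t)\leq C_T E(t)$ via the mean value theorem, and (iii) separately handling the compact interval $[0,T_{\e_0}]$, all of which the paper leaves implicit.
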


\begin{proof}
In order to prove $h_K(t) \leq CE(t)$ we simply need to prove $h:[T_{\e_0}, T) \to (0,\e_0]$ is bounded by $CE(t)$ but this is immediate, by definition. Indeed, if $h$ is Lipschitz then there exists a $C > 0$ such that for all $t,\tau\in [T_{\e_0},T)$
\begin{equation*}
    |h(\tau)-h(t)| \leq C|\tau-t|.
\end{equation*}
Taking the limit as $\tau \to T$ yields the desired result since $(T-t) \sim E(t)$.
\end{proof}

\noindent When $f:X \to B$ is a submersion we have the following improvement on Theorem \ref{C0_varphi_estimate}.

\begin{theorem}
Assume that the map $f: X \to B$ is a submersion. If $\omega_{SPR} - \omega_0 + f^*\omega_B - f^*\eta = 0$ on $X$, then there exists a constant $C = C(n,m,T,\omega_0,f^*\eta,\rho_{SPR},f^*\rho_B) > 0$ such that on $X \times [0,T)$ we have
\begin{equation*}
    \left|\varphi(t) - f^*\rho_B\right| \leq CE(t).
\end{equation*}
If $\omega_{SPR} = \omega_{0}$ on $X$, then there exists a constant $C = C(n,m,T,\omega_0,f^*\eta, \rho_{SPR}, f^*\rho_B') > 0$ such that on $X \times [0,T)$ we have
\begin{equation*}
    \left|\varphi(t) - (1-e^{-T}) f^*\rho_B'\right| \leq CE(t).
\end{equation*}
If (\ref{SKE}) holds and $f^*\omega_{B} = f^*\eta$ on $X$, then there exists a positive constant $C = C(n,m,T,\omega_0,f^*\eta,\rho_{SKE}, f^*\rho_B) > 0$ such that on $X \times [0,T)$ we have
\begin{equation*}
    \left|\varphi(t) - f^*\rho_B\right| \leq CE(t).
\end{equation*}
If (\ref{SKE}) holds then there exists a constant $C = C(n,m,T,\omega_0,f^*\eta, \rho_{SKE},f^*\rho_B') > 0$ such that on $X \times [0,T)$ we have
\begin{equation*}
    \left|\varphi(t) - (1-e^{-T}) f^*\rho_B'\right| \leq CE(t).
\end{equation*}
\end{theorem}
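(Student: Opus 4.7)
The plan is to adapt the proof of Theorem \ref{C0_varphi_estimate} to the submersion case. In this setting $B$ is smooth and the critical set $S$ is empty, so the cutoff functions $\chi_\e$ and the logarithmic barriers $\e\log|s_j|^2_{h_{D_j}}$ needed there to isolate the singular fibres are unnecessary: the auxiliary functions can be taken globally smooth on $X\times[0,T)$ and the full force of the geometric hypothesis of each case can be exploited pointwise on all of $X$.

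For Case 1, I would define the unperturbed auxiliary functions
\[
\varphi^{\pm}(t) = \varphi(t) - E(t)\rho_{SPR} - (1-E(t)) f^*\rho_B,
\]
so that the decomposition $\omega(t) = E(t)\omega_{SPR} + (1-E(t)) f^*\omega_B + i\d\dbar\varphi^{\pm}$ holds globally, and using the identity $\binom{n}{m}\omega_{SPR}^{n-m}\wedge f^*\omega_B^m = e^{f^*\rho_B}\Omega$ from Section 4 derive
\[
\frac{\d\varphi^{\pm}}{\d t} = \log\frac{E(t)^{-(n-m)}\omega(t)^n}{\binom{n}{m}\omega_{SPR}^{n-m}\wedge f^*\omega_B^m} - \varphi^{\pm} + \lambda(\rho_{SPR}-f^*\rho_B).
\]
At an interior max of $\varphi^{+}$ (respectively min of $\varphi^{-}$), $i\d\dbar\varphi^{\pm}\leq 0$ (respectively $\geq 0$) gives the pointwise comparison $\omega(t)\leq E(t)\omega_{SPR}+(1-E(t)) f^*\omega_B$ (respectively the reverse) and hence, expanding the $n$-th power, bounds the logarithmic term by $\pm CE(t)$. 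The hypothesis $\omega_{SPR}-\omega_0+f^*\omega_B-f^*\eta=0$ forces $\rho_{SPR}+f^*\rho_B$ to be constant on $X$, and after a suitable normalization this collapses the spatial quantity $\lambda(\rho_{SPR}-f^*\rho_B)$ into a pure constant which, combined with the $\pm CE(t)$ term, yields $|\varphi-f^*\rho_B|\leq CE(t)$. Cases 2 and 3 are analogous, using $\binom{n}{m}\omega_{SPR}^{n-m}\wedge((1-e^{-T}) f^*\omega_B')^m=e^{f^*\rho_B'}\Omega$ and the (\ref{SKE})-variant together with the corresponding hypotheses $\omega_{SPR}=\omega_0$ or $f^*\omega_B=f^*\eta$ to trivialize the spatial sup/inf terms; Case 4 is simplest, since the corresponding bound in Theorem \ref{C0_varphi_estimate} under (\ref{SKE}) already has no sup/inf term, so only the rate improvement is needed.

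Without the cutoff, the implicit function $h(T_\e)=\e$ constructed in the proof of Theorem \ref{C0_varphi_estimate} reduces to the analytic relation $E(T_\e)\sim\e$, which is manifestly Lipschitz near $\e=0$, so by Lemma \ref{lipschitz_lemma} the rate $h_K(t)\leq CE(t)$ is automatic. The main obstacle I anticipate is the maximum-principle step itself: a naive application gives only that $\sup_{X\times[0,T)}\varphi^{\pm}$ is bounded, not that it decays like $E(t)$. Upgrading the pointwise bound $\varphi^{\pm}(x^*,t^*)\leq CE(t^*) + C'$ to the desired $|\varphi-f^*\rho_B|(x,t)\leq CE(t)$ will require introducing a time-dependent shift $\varphi^{\pm}\mp\beta(t)$ with $\beta(t)$ solving an ODE compatible with the differential inequality, chosen so that $\beta(T)=0$ and $\beta(t)\sim E(t)$, and then running the maximum principle on $\varphi^{\pm}\mp\beta$. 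Verifying that such a $\beta$ exists under each of the four hypotheses, together with the compatibility of the normalizations of $\rho_{SPR}$ and $\rho_B$ in Cases 1--3, is the main technical obstacle.
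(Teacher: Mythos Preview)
Your middle paragraph already contains the paper's entire argument: in the submersion case one retains the $\e$-family from Theorem \ref{C0_varphi_estimate}, observes that the ratios $\omega_{SPR}^{n-i}\wedge f^*\omega_B^i\,\big/\,\omega_{SPR}^{n-m}\wedge f^*\omega_B^m$ are now globally bounded on $X$, and hence the choices $E(T_\e^+)=\e/A_1$, $E(T_\e^-)=\e/A_2$, $E(T_\e')=E(T_\e'')=\e$ are admissible. This gives $h(T_\e)=\e\leq CE(T_\e)$, i.e.\ $h_X(t)\leq CE(t)$, and together with the hypothesis of each case killing the spatial $\sup/\inf$ terms in Theorem \ref{C0_varphi_estimate} the result follows. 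The paper does not even invoke Lemma \ref{lipschitz_lemma}; it computes $h(T_\e)\leq CE(T_\e)$ directly.

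Your perceived ``main obstacle'' is therefore a detour. By discarding the $\e$-parameter at the outset you throw away exactly the device that converts the pointwise bound $\varphi^+(x^*,t^*)\leq CE(t^*)$ into a decay rate in $t$: the whole point of the family is that for each target time $t$ one selects $\e$ with $T_\e=t$, and the bound on $X\times[T_\e,T)$ then reads $\varphi^+(\cdot,t)\leq C\e\sim CE(t)$. Your proposed ODE shift $\beta(t)$ would work, but it just rebuilds the $\e$-parametrization by hand. There is no obstacle here once you keep the $\e$.

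One genuine slip: the hypothesis $\omega_{SPR}-\omega_0+f^*\omega_B-f^*\eta=0$ indeed gives $i\d\dbar(\rho_{SPR}+f^*\rho_B)=0$, so $\rho_{SPR}+f^*\rho_B$ is constant, as you write. But that does \emph{not} make $\rho_{SPR}-f^*\rho_B$ constant, and it is $\rho_{SPR}-f^*\rho_B$ that appears in the evolution equation and in the $\sup/\inf$ terms of Theorem \ref{C0_varphi_estimate}. Your claim that ``a suitable normalization collapses $\lambda(\rho_{SPR}-f^*\rho_B)$ into a pure constant'' does not follow; this is a sign inconsistency you are inheriting from the stated hypothesis in Case 1, and you should revisit it rather than paper over it.
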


\begin{proof}
First, we note that the only dependence on $U_X$ in the proof of Theorem \ref{C0_varphi_estimate} is in the global $C^0$-bound for $\varphi$. However, due to Proposition \ref{submersion_C0_varphi_estimate}, we have a $C^0$-bound for $\varphi$ independent of $U_X$ so we can drop the (\ref{VFC}) assumption.\\
Let us simply treat the first case listed since the argument for the other cases is identical. If $\omega_{SPR} - \omega_0 + f^*\omega_B - f^*\eta = 0$ on $X$ then $\rho_{SPR}-f^*\rho_B$ is a constant which can be normalized to be $0$. As such, it follows that
\begin{equation*}
    |\varphi(t) - f^*\rho_B| \leq h_X(t).
\end{equation*}
Next, we prove for the function $h: [T_{\e_0},T) \to (0,\e_0]$, $h(T_\e) = \e$, that
\begin{equation*}
    h(T_{\e}) \leq CE(T_{\e}),
\end{equation*}
as it follows that $h_X(t) \leq CE(t)$.\\
Recall $T_{\e} = \max\{T^+_{\e},T^-_{\e}, T'_{\e},T''_{\e}\}$. We observe that
\begin{equation*}
    A_{\e}^{+}(t) = \sum_{i=0}^{m-1} E(t)^{m-i}(1-E(t)+\mu_j\e)^i \frac{{n\choose i}\omega_{SPR}^{n-i} \wedge f^* \omega_B^i}{{n\choose m} \omega_{SPR}^{n-m} \wedge f^*\omega_B^m}
\end{equation*}
satisfies 
\begin{equation*}
    A^+_{\e}(t) \leq A_1 E(t)
\end{equation*}
on $X \times [0,T)$ for some uniform $A_1 > 0$ since $0 < \e < 1$ and for all $i = 0, \dots, m-1$,
\begin{equation*}
    -C \omega_{SPR}^{n-m} \wedge f^*\omega_B^m \leq \omega_{SPR}^{n-i} \wedge f^*\omega_B^i \leq C \omega_{SPR}^{n-m} \wedge f^*\omega_B^m.
\end{equation*}
Then we can choose $T^+_{\e}$ by $E(T^+_{\e}) = \frac{\e}{A_1}$ such that $A^+_{\e}(t) \leq \e$ on $X \times [T_{\e}^+,T)$. Similarly,
\begin{equation*}
    A_{\e}^{-}(t) = \sum_{i=0}^{m-1} E(t)^{m-i}(1-E(t)-\mu_j\e)^i \frac{{n\choose i}\omega_{SPR}^{n-i} \wedge f^* \omega_B^i}{{n\choose m} \omega_{SPR}^{n-m} \wedge f^*\omega_B^m}
\end{equation*}
satisfies
\begin{equation*}
    A_{\e}^-(t) \geq -A_2 E(t)
\end{equation*}
on $X \times [0,T)$ for some uniform $A_2 > 0$ since
\begin{align*}
    A_{\e}^{-}(t) & \geq -\sum_{i=0}^{m-1} C_i E(t)^{m-i}|1-E(t)-\mu_j\e|^i\\
    &\geq - \sum_{i=0}^{m-1} C_i E(t)^{m-i} (1+\mu_j)^i\\
    &\geq - A_2 E(t).
\end{align*}
Thus, we can choose $T^-_{\e}$ by $E(T^-_{\e}) = \frac{\e}{A_2}$ such that $A^-_{\e}(t) \geq -\e$ on $X \times [T^-_{\e},T)$.\\
Furthermore, we can choose $T'_{\e}$ by $E(T'_{\e}) = \e$ since
\begin{equation*}
    E(t) \sup_{X} |\rho_{SPR}| \leq 1
\end{equation*}
always holds for all $t \geq T' = T'(\sup_X|\rho_{SPR}|)$ for some $T' \in [0,T)$. Additionally, we choose $T''_{\e}$ by $E(T''_{\e}) = \e$. Therefore, by setting $C = \max\{A_1,A_2,1\}$ we get
\begin{equation*}
    h(T_\e) = \e \leq C E(T_\e)
\end{equation*}
as desired.
\end{proof}

\section{Type I Scalar Curvature}

\noindent We begin with the following Propositions.

\begin{proposition}
Assume that the (\ref{KRF}) satisfies (\ref{VFC}). If $\rho_{SPR} = 0$ on $X \backslash S$, then for any compact set $K \subseteq X \backslash S$ there exists a continuous, strictly decreasing function $h_K:[0,T) \to [0,\infty)$, where $h_K = h_K(n,m,T,\omega_0,f^*\eta, \rho_{SPR}, f^*\rho_B', K, C_K,U_X)$, such that on $K \times [0,T)$ we have
\begin{equation*}
    \left|u(t) - (1-e^{-T}) f^*\rho_B'\right| \leq h_K(t).
\end{equation*}
If (\ref{SKE}) holds, then for any compact set $K \subseteq X \backslash S$ there exists a $h_K:[0,T) \to [0,\infty)$, where $h_K = h_K(n,m,T,\omega_0,f^*\eta, \rho_{SKE}, f^*\rho_B', K, C_K,U_X)$, such that on $K \times [0,T)$ we have
\begin{equation*}
    \left|u(t) - (1-e^{-T}) f^*\rho_B'\right| \leq h_K(t).
\end{equation*}
\end{proposition}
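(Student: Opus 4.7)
The plan is to deduce the estimate for the Ricci potential $u$ from the $C^0$-estimate for $\varphi$ already established in Theorem \ref{C0_varphi_estimate}. By definition $u = (1-e^{t-T}) \frac{\d \varphi}{\d t} + \varphi$, so for any choice of target function we may split
\begin{equation*}
u(t) - (1-e^{-T})f^*\rho_B' = (1-e^{t-T})\frac{\d \varphi}{\d t} + \left(\varphi(t) - (1-e^{-T})f^*\rho_B'\right),
\end{equation*}
and the task reduces to controlling the two summands on $K \times [0,T)$.

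The first summand is handled by Proposition \ref{dvarphi_dt_bound}: under the assumption (\ref{VFC}), there is a constant $C = C(n,m,\omega_0,f^*\eta,T,C_K)$ with $\abs{\d_t \varphi} \leq C$ on $K \times [0,T)$. Since $1-e^{t-T}$ is continuous, strictly decreasing on $[0,T)$ and vanishes to first order in $T-t$ as $t \to T$, the first summand is dominated in absolute value by $C(1-e^{t-T})$, which already has the required structural form. The second summand is handled by Theorem \ref{C0_varphi_estimate}: under the hypothesis $\rho_{SPR} = 0$ on $X \backslash S$, the second case produces a continuous, strictly decreasing $h^*_K:[0,T) \to [0,\infty)$ with $h^*_K \to 0$ as $t \to T$, depending on $(n,m,T,\omega_0,f^*\eta,\rho_{SPR},f^*\rho_B',U_X)$, bounding the second summand in absolute value; under the hypothesis (\ref{SKE}), the fourth case of the same theorem produces an analogous $h^*_K$ with $\rho_{SPR}$ replaced by $\rho_{SKE}$ in the dependency list.

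Combining via the triangle inequality, the proposition follows by defining
\begin{equation*}
h_K(t) := C(1-e^{t-T}) + h^*_K(t),
\end{equation*}
which inherits continuity, strict monotonicity and decay to $0$ as $t \to T$ from its two summands, and whose parameter dependence is the union of the dependencies of the two ingredients (in particular including $K$ and $C_K$ via Proposition \ref{dvarphi_dt_bound}). Since the entire argument is a direct appeal to results already proved, I anticipate no genuine obstacle; the only conceptual point worth emphasising is that the uniform but non-decaying bound on $\d_t \varphi$ supplied by Proposition \ref{dvarphi_dt_bound} is \emph{already enough} to make the error term $(1-e^{t-T})\d_t \varphi$ decay to zero, purely through the first-order vanishing of the prefactor $1-e^{t-T}$.
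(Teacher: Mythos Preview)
Your proposal is correct and takes essentially the same approach as the paper: split $u-(1-e^{-T})f^*\rho_B'$ into $(1-e^{t-T})\d_t\varphi$ plus $\varphi-(1-e^{-T})f^*\rho_B'$, bound the first summand using the local estimate $|\d_t\varphi|\leq C$ from Proposition~\ref{dvarphi_dt_bound}, and bound the second using the relevant case of Theorem~\ref{C0_varphi_estimate}. The paper's proof is a one-line version of exactly this triangle-inequality argument.
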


\begin{proof}
In both cases, it is clear that
\begin{align*}
    |u - f^*\rho_B| &= \left|(1-e^{t-T}) \frac{\d \varphi}{\d t} + \varphi - (1-e^{-T})f^*\rho'_B\right|\\
    &\leq  C (1-e^{t-T}) + |\varphi-(1-e^{-T})f^*\rho'_B| \leq h_K(t).
\end{align*}
\end{proof}

\begin{proposition}
Assume that the map $f: X \to B$ is a submersion and the (\ref{KRF}) satisfies (\ref{VFC}). If $\omega_{SPR} = \omega_{0}$ on $X$, then there exists a constant $C = C(n,m,T,\omega_0,f^*\eta, \rho_{SPR}, f^*\rho_B',C_X) > 0$ such that on $X \times [0,T)$ we have
\begin{equation*}
    \left|u(t) - (1-e^{-T}) f^*\rho_B'\right| \leq CE(t).
\end{equation*}
If (\ref{SKE}) holds then there exists a constant $C = C(n,m,T,\omega_0,f^*\eta, \rho_{SKE},f^*\rho_B',C_X) > 0$ such that on $X \times [0,T)$ we have
\begin{equation*}
    \left|u(t) - (1-e^{-T}) f^*\rho_B'\right| \leq CE(t).
\end{equation*}
\end{proposition}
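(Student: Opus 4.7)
The plan is to mirror the argument of the preceding proposition, but exploit the submersion hypothesis to upgrade the rate $h_K(t)$ to the optimal $E(t)$. The decomposition
\begin{equation*}
    u - (1-e^{-T}) f^*\rho_B' = (1-e^{t-T})\frac{\partial \varphi}{\partial t} + \bigl(\varphi - (1-e^{-T}) f^*\rho_B'\bigr)
\end{equation*}
reduces the bound to controlling each of the two summands by a constant multiple of $E(t)$, and then invoking the triangle inequality.

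For the first summand, I would observe that a direct computation gives
\begin{equation*}
    1 - e^{t-T} = (1-e^{-T})\, e^{T-t}\, E(t),
\end{equation*}
so that $1 - e^{t-T} \leq C E(t)$ on $[0,T)$ with $C$ depending only on $T$. Since $f:X \to B$ is a submersion, (\ref{VFC}) holds with $K = X$ and constant $C_X$, so Proposition \ref{dvarphi_dt_bound} applied with $K=X$ yields the global bound $|\partial_t \varphi| \leq C(n,m,\omega_0,f^*\eta,T,C_X)$ on all of $X \times [0,T)$. Combining these two estimates gives $(1-e^{t-T})|\partial_t\varphi| \leq C E(t)$ globally.

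For the second summand, I would invoke the submersion improvement of Theorem \ref{C0_varphi_estimate} established just above: under the hypothesis $\omega_{SPR}=\omega_0$ (respectively, under (\ref{SKE})), we have $|\varphi - (1-e^{-T}) f^*\rho_B'| \leq C E(t)$ on $X \times [0,T)$, with $C$ depending on the allowed data (and the same remark on the dependence of $C$ on $\rho_{SPR}$ or $\rho_{SKE}$ applies here). Adding this to the previous estimate and applying the triangle inequality then yields the required bound, with the constant $C$ inheriting dependence exactly on the data listed in the statement.

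There is no genuine obstacle: the argument is a routine adaptation of the preceding proposition, and all the heavy lifting has been done in the submersion version of Theorem \ref{C0_varphi_estimate} and in Proposition \ref{dvarphi_dt_bound}. The only subtlety worth double-checking is that $1-e^{t-T}$ and $E(t)$ are comparable on $[0,T)$ (both vanish to first order at $t=T$, as shown by the identity above), so that the presence of the factor $1 - e^{t-T}$ in the definition of $u$ does not obstruct obtaining the linear $E(t)$ rate.
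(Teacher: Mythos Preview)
Your proposal is correct and matches the paper's approach exactly: the paper leaves this proposition without an explicit proof, but the intended argument is precisely the decomposition $u - (1-e^{-T})f^*\rho_B' = (1-e^{t-T})\partial_t\varphi + (\varphi - (1-e^{-T})f^*\rho_B')$ used in the preceding proposition, combined with the global bound $|\partial_t\varphi|\leq C$ from Proposition~\ref{dvarphi_dt_bound} (with $K=X$) and the submersion improvement of Theorem~\ref{C0_varphi_estimate}. Your observation that $1-e^{t-T}$ and $E(t)$ are comparable is the only additional ingredient needed to get the $E(t)$ rate, and it is correct.
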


\noindent These two Propositions also hold for the other cases but we will not focus on them in section. Unfortunately, we cannot say any more about the general case where $f: X \to B$ potentially has singular fibres unless we have the control on $h_K$ provided by Lemma \ref{lipschitz_lemma}. So for the remainder of this section we will add the assumption that $h_K$ is Lipschitz so that $h_K(t) \leq CE(t)$ and this will enables us to prove a local Type I scalar curvature bound in any compact set away from singular fibres.\\

\noindent So, in the four settings mentioned in the previous two Propositions and assuming that $h_K$ is Lipschitz, we can fix an $A > 0$ so that
\begin{equation*}
    A E(t) \leq 2AE(t) - (u(t)- (1-e^{-T})f^*\rho'_B) \leq 3AE(t)
\end{equation*}
and denote
\begin{equation*}
    v(t):= 2AE(t) - (u(t)- (1-e^{-T})f^*\rho'_B).
\end{equation*}
It follows from equation (\ref{scalar_curvature_formula}) that 
\begin{equation*}
    (1-e^{t-T}) R_{\omega(t)} = e^{t-T} n - (1-e^{-T})tr_{\omega(t)} f^*\omega'_B + \Delta_{\omega(t)} v.
\end{equation*}
We now prove some Li-Yau type estimates.

\begin{proposition}
Assume that the (\ref{KRF}) satisfies (\ref{VFC}). If $\rho_{SPR} = 0$ on $X \backslash S$ or (\ref{SKE}) holds, then for any compact set $K \subseteq X \backslash S$ where $h_K$ is Lipschitz there exist a constant $C = C(n,m,T,\omega_0,f^*\eta,\rho_{SPR},f^*\rho_B',K,C_K,U_X) > 0$ or a constant $C = C(n,m,T,\omega_0,f^*\eta,\rho_{SKE},f^*\rho_B',K,C_K,U_X) > 0$, respectively, such that on $K \times [0,T)$,
\begin{equation*}
    \frac{|\nabla v|^2}{v} \leq C.
\end{equation*}
On the other hand, assume that $f:X \to B$ is a submersion and (\ref{KRF}) satisfies (\ref{VFC}). If $\omega_{SPR} = \omega_{0}$ on $X$ or if (\ref{SKE}) holds, then there exists a $C = C(n,m,T,\omega_0, f^*\eta,\rho_{SPR},f^*\rho_B',C_X) > 0$ or $C = C(n,m,T,\omega_0,f^*\eta,\rho_{SKE},f^*\rho_B',C_X) > 0$, respectively, such that on $X \times [0,T)$ we have
\begin{equation*}
    \frac{|\nabla v|^2}{v} \leq C.
\end{equation*}
\end{proposition}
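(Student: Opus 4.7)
The plan is to apply a maximum principle to $Q = |\nabla v|^2/v$ together with the cutoff $\Upsilon = e^{-C_1\chi^{-C_2}}$ used in the proof of Proposition \ref{trace_lemma}. Using the standard Bochner formula for K\"ahler metrics and the evolution $\partial_t g^{i\bar j} = R^{i\bar j} + g^{i\bar j}$ of the normalized flow, the Ricci contributions coming from Bochner and from differentiating the inverse metric cancel exactly, leaving
\begin{equation*}
\parabolic{} |\nabla v|^2 = -|\nabla\nabla v|^2 - |\nabla\bar\nabla v|^2 + |\nabla v|^2 + 2\Real\ip{\nabla(\parabolic{} v), \bar\nabla v}.
\end{equation*}
From Proposition 2.2 and the definition $v = 2AE(t) - u + (1-e^{-T})f^*\rho'_B$, a direct calculation yields
\begin{equation*}
\parabolic{} v = 2AE'(t) + m - (1-e^{-T})\tr_{\omega(t)} f^*\omega'_B,
\end{equation*}
which is uniformly bounded by the parabolic Schwarz lemma.

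I next combine these to compute $\parabolic{} Q$, using the key observation that at a critical point of $Q$ the identity $\nabla|\nabla v|^2 = Q\nabla v$ forces the cubic terms arising from $\Delta(|\nabla v|^2/v)$, namely $\frac{2\Real\ip{\nabla|\nabla v|^2,\bar\nabla v}}{v^2}$ and $\frac{2|\nabla v|^4}{v^3}$, to cancel against each other. The resulting inequality at a maximum is
\begin{equation*}
\frac{|\nabla\nabla v|^2 + |\nabla\bar\nabla v|^2}{v} \leq Q + \frac{2\Real\ip{\nabla(\parabolic{} v), \bar\nabla v}}{v} + \frac{Q|\parabolic{} v|}{v}.
\end{equation*}
The Hessian on the left is bounded below by the elementary estimate $|\nabla|\nabla v|^2|^2 \leq C|\nabla v|^2(|\nabla\nabla v|^2 + |\nabla\bar\nabla v|^2)$, which combined with $\nabla|\nabla v|^2 = Q\nabla v$ and $|\nabla v|^2 = Qv$ at the critical point gives $|\nabla\nabla v|^2 + |\nabla\bar\nabla v|^2 \geq cQ^2$. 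Substituting this lower bound and using $v \leq 3AE(t) \leq C$, together with the uniform bounds on $\parabolic{} v$ and on $|\nabla(\parabolic{} v)|_{\omega(t)}$, yields at the maximum an inequality of the form $cQ^2 \leq CQ + C\sqrt{Q}$, forcing $Q \leq C$.

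For the local case on $K \subseteq X\setminus S$, the argument is applied to $\Upsilon Q$ in place of $Q$; the additional terms arising from $|\nabla\Upsilon|^2/\Upsilon$ and $\Delta\Upsilon$ produce contributions controlled by $C\chi^{-2C_2-2}\Upsilon$, which are absorbed by a suitable choice of $C_1, C_2$ and by using $|\nabla\chi|^2_{\omega(t)} \leq C$, $|\Delta_{\omega(t)}\chi| \leq C$ together with the $C^0$-bound on $v$, exactly as in the proof of Proposition \ref{trace_lemma}. The submersion case follows by taking $K = X$ and dropping the cutoff entirely. The principal obstacle is the bound $|\nabla(\parabolic{} v)|_{\omega(t)} \leq C$, equivalently $|\nabla\tr_{\omega(t)} f^*\omega'_B|_{\omega(t)} \leq C$: although $\omega(t)^{-1}$ degenerates in fibre directions, $f^*\omega'_B$ is a pullback from $B$, so the relevant contraction only involves $\omega(t)^{-1}$ restricted to horizontal directions, which remains uniformly bounded on $K$ by Proposition \ref{trace_lemma}. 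Once this gradient estimate is established, the rest of the argument is a clean application of the maximum principle.
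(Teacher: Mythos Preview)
Your argument has a genuine gap at the step you yourself flag as ``the principal obstacle'': the bound $|\nabla \tr_{\omega(t)} f^*\omega'_B|_{\omega(t)} \leq C$ is not established, and the justification you give is incorrect. While it is true that $f^*\omega'_B$ has components only in horizontal directions, the scalar function $\tr_{\omega(t)} f^*\omega'_B = g^{i\bar j}(t)(f^*\omega'_B)_{i\bar j}$ depends on the evolving inverse metric $g^{i\bar j}(t)$. Its derivative in a fibre direction $\partial_k$ is $\bigl(\partial_k g^{i\bar j}(t)\bigr)(f^*\omega'_B)_{i\bar j}$, which involves first derivatives of the metric $\omega(t)$; these are not controlled by the $C^0$-equivalence $C^{-1}\omega_{REF}\leq\omega(t)\leq C\omega_{REF}$ of Proposition~\ref{trace_lemma}. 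Moreover, the norm $|\nabla(\cdot)|_{\omega(t)}^2$ contracts with the full inverse metric, which blows up like $E(t)^{-1}$ in fibre directions, so even a mild fibrewise variation of $\tr_{\omega(t)} f^*\omega'_B$ would be amplified. No result in the paper up to this point gives you this gradient bound.

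The paper's proof avoids this issue entirely: instead of bounding $|\nabla\tr_{\omega(t)} f^*\omega'_B|$ directly, it adds $(1-e^{-T})\tr_{\omega(t)} f^*\omega'_B$ to the test quantity, forming $Q_3 = |\nabla v|^2/v + (1-e^{-T})\tr_{\omega(t)} f^*\omega'_B$. The parabolic Schwarz-type inequality $\parabolic{}\tr_{\omega(t)} f^*\omega'_B \leq C - C^{-1}|\nabla\tr_{\omega(t)} f^*\omega'_B|^2$ then contributes a \emph{negative} term $-C^{-1}|\nabla\tr_{\omega(t)} f^*\omega'_B|^2$ in the evolution of $Q_3$, which absorbs the cross terms $\Real\ip{\nabla\tr_{\omega(t)} f^*\omega'_B,\bar\nabla v/v}$ arising from $2\Real\ip{\nabla(\parabolic{} v),\bar\nabla v}/v$ via Cauchy--Schwarz. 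This is the standard Song--Tian trick for handling the uncontrolled gradient of a trace quantity, and it is precisely what your argument is missing.
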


\begin{proof}
We remark that in this and the following proof, we shall only provide the argument for the first case where $\rho_{SPR} = 0$ on $X \backslash S$, since the arguments for the remaining cases are similar. If we want to consider the case that $f:X \to B$ is a submersion then we can take $K = X$ and set $\chi = 1$ or $\Upsilon = 1$ on $X$ below, where $\chi$ and $\Upsilon$ are the cut-off functions used in Proposition \ref{trace_lemma}. Recall that $0 \leq \chi \leq 1$ on $X$ and for $K \subset \subset U \subset \subset X \backslash S$
\begin{equation*}
    \chi = 0 \quad \text{on } X \backslash U, \quad \text{and} \quad \chi = 1 \quad \text{on } K,
\end{equation*}
and for some constants $C_1, C_2 > 0$ we set $\Upsilon = e^{-C_1 \chi^{-C_2}}$. Furthermore, choose $K'$ to be a compact set such that $K \subset \subset U \subset \subset K' \subset \subset X \backslash S$.\\ 
First of all, by Bochner's formula, \cite[Prop. 6.4]{B25I}, we have
\begin{equation*}
    \parabolic{}|\nabla v|^2 = |\nabla v|^2 - |\nabla \nabla v|^2 - |\nabla \oo{\nabla} v|^2 + 2 \Real \ip{\nabla \parabolic{}v,\oo{\nabla} v}
\end{equation*}
on $X \backslash S \times [0,T)$, and we further calculate the evolution equation of $\frac{|\nabla v|^2}{v}$ to be
\begin{align*}
    \parabolic{}\frac{|\nabla v|^2}{v} &= \frac{|\nabla v|^2 - |\nabla \nabla v|^2 - |\nabla \oo{\nabla} v|^2}{v}+  \frac{2 \Real\ip{\nabla \parabolic{} v, \oo{\nabla} v}}{v}\\
    &- \frac{|\nabla v|^2}{v^2} \parabolic{} v+ 2 \Real \ip{\nabla \frac{|\nabla v|^2}{v}, \frac{\oo{\nabla} v}{v}}
\end{align*}
and note that
\begin{equation*}
    \parabolic{} v = m-(1-e^{-T})\tr_{\omega(t)} f^*\omega'_B - \frac{2A e^{-t}}{1-e^{-T}}.
\end{equation*}
Additionally, by the argument of \cite[Thm. 4.3]{ST07}, there exists a constant $C > 0$ dependent on the bisectional curvature of $\omega_B$ and the compact set $K'$ such that on $K' \times [0,T)$ we have
\begin{equation*}
    \Delta_{\omega(t)} \tr_{\omega(t)} f^*\omega'_B \geq \ip{\Ric(\omega(t)), f^*\omega'_B} - C (\tr_{\omega(t)} f^*\omega'_B)^2 + \frac{|\nabla\tr_{\omega(t)} f^*\omega'_B|^2}{\tr_{\omega(t)} f^*\omega'_B},
\end{equation*}
and so that the parabolic Schwarz lemma implies $\tr_{\omega(t)} f^*\omega_B' \leq C$ and thus
\begin{align*}
    \parabolic{} \tr_{\omega(t)} f^*\omega'_B \leq C - C^{-1} |\nabla \tr_{\omega(t)} f^*\omega'_B|^2.
\end{align*}
It follows that on $U \times [0,T)$
\begin{align}\label{norm_grad_v_proof_inequality}
    \parabolic{}&\left(\frac{|\nabla v|^2}{v} + (1-e^{-T})\tr_{\omega(t)} f^*\omega'_B\right)\\\notag
    &\leq \frac{|\nabla v|^2 - |\nabla \nabla v|^2 - |\nabla \oo{\nabla} v|^2}{v}- \frac{|\nabla v|^2}{v^2} \left(m-(1-e^{-T})\tr_{\omega(t)} f^*\omega'_B - \frac{2A e^{-t}}{1-e^{-T}}\right)\\\notag
    &+ 2 \Real \ip{\nabla \left(\frac{|\nabla v|^2}{v}+ (1-e^{-T})\tr_{\omega(t)} f^*\omega'_B \right), \frac{\oo{\nabla} v}{v}}\\\notag
    &- 4 \Real \ip{\nabla (1-e^{-T})\tr_{\omega(t)}f^*\omega_B', \frac{\oo{\nabla} v}{v}}\\\notag
    &+ C - C^{-1}|\nabla (1-e^{-T})\tr_{\omega(t)} f^* \omega'_B|^2.\notag
\end{align}
Now, let $\e > 0$ be a small constant to be determined and write
\begin{align*}
    2 \Real &\ip{\nabla \left(\frac{|\nabla v|^2}{v} + (1-e^{-T})\tr_{\omega(t)} f^*\omega'_B \right), \frac{\oo{\nabla} v}{v}}\\ &= 2 \e \Real \ip{\nabla \left(\frac{|\nabla v|^2}{v} + (1-e^{-T})\tr_{\omega(t)} f^*\omega'_B \right), \frac{\oo{\nabla} v}{v}}\\
    &+ 2(1-\e)\Real \ip{\nabla \left(\frac{|\nabla v|^2}{v} + (1-e^{-T})\tr_{\omega(t)} f^*\omega'_B \right), \frac{\oo{\nabla} v}{v}}.
\end{align*}
It follows that for some $\e' > 0$ to be determined,
\begin{align*}
    2 \e \Real &\ip{\nabla \left(\frac{|\nabla v|^2}{v} + (1-e^{-T})\tr_{\omega(t)} f^*\omega'_B \right), \frac{\oo{\nabla} v}{v}}\\ &\leq C \frac{|\nabla v|^2}{v^2} + \e' |\nabla (1-e^{-T})\tr_{\omega(t)} f^*\omega'_B|^2 -2 \e \frac{|\nabla v|^4}{v^3}+ \frac{2\e \Real \ip{\nabla |\nabla v|^2, \oo{\nabla} v}}{v^2}.
\end{align*}
Then we use the inequality $\big|\ip{\nabla |\nabla v|^2, \oo{\nabla} v}\big| \leq |\nabla v|^2 (|\nabla \nabla v| + |\nabla \oo{\nabla} v|)$ to find
\begin{align*}
    \frac{2\e \Real \ip{\nabla |\nabla v|^2, \oo{\nabla} v}}{v^2} &\leq \frac{2\e |\nabla v|^2 (|\nabla \nabla v| + |\nabla \oo{\nabla} v|)}{v^2} \\
    &\leq \frac{\e^2|\nabla v|^4}{v^3} + \frac{|\nabla \nabla v|^2 + |\nabla \oo{\nabla} v|^2}{v},
\end{align*}
and finally, for the same $\e' > 0$, we have
\begin{equation*}
    - 4 \Real \ip{\nabla (1-e^{-T})\tr_{\omega(t)}f^*\omega_B', \frac{\oo{\nabla} v}{v}} \leq \e'|\nabla(1-e^{-T})\tr_{\omega(t)} f^*\omega_B'|^2 + C \frac{|\nabla v|^2}{v^2}.
\end{equation*}
Combining the estimates so far, we have on $U \times [0,T)$
\begin{align*}
    \parabolic{} &\left(\frac{|\nabla v|^2}{v} + (1-e^{-T})\tr_{\omega(t)} f^*\omega'_B\right)\\
    &\leq C + C\frac{|\nabla v|^2}{v} +C \frac{|\nabla v|^2}{v^2} - (2\e-\e^2)\frac{|\nabla v|^4}{v^3}\\
    &+ 2 (1-\e)\Real \ip{\nabla \left(\frac{|\nabla v|^2}{v}+ (1-e^{-t})\tr_{\omega(t)} f^*\omega'_B \right), \frac{\oo{\nabla} v}{v}}\\
    & + (2\e' - C^{-1}) |\nabla (1-e^{-T})\tr_{\omega(t)} f^*\omega'_B|^2\\
    &\leq C +C \frac{|\nabla v|^2}{v^2} - (2\e-\e^2)\frac{|\nabla v|^4}{v^3}\\
    &+ 2 (1-\e)\Real \ip{\nabla \left(\frac{|\nabla v|^2}{v}+ (1-e^{-T})\tr_{\omega(t)} f^*\omega'_B \right), \frac{\oo{\nabla} v}{v}}
\end{align*}
where we choose $\e'$ sufficiently small such that $(2\e' - C^{-1})|\nabla (1-e^{-T})\tr_{\omega(t)} f^*\omega'_B|^2 \leq 0$, apply $0 < v \leq C$, and use $0 \leq \tr_{\omega(t)} f^*\omega_B \leq C$ on $K' \times [0,T)$. We now set 
\begin{equation*}
    Q_3 = \frac{|\nabla v|^2}{v} + (1-e^{-T})\tr_{\omega(t)} f^*\omega'_B
\end{equation*}
so that on $X \times [0,T)$ we have
\begin{equation*}
    \parabolic{} \Upsilon Q_3 = \Upsilon \parabolic{} Q_3 +\left(\frac{|\nabla \Upsilon|^2}{\Upsilon} -\Delta \Upsilon\right)Q_3 - 2 \Real \ip{\nabla (\Upsilon Q_3), \frac{\oo{\nabla} \Upsilon}{\Upsilon}},
\end{equation*}
where $\Upsilon = e^{-C_1\chi^{-C_2}}$ as before. We estimate the middle term by
\begin{align*}
    \left(\frac{|\nabla \Upsilon|^2}{\Upsilon} - \Delta \Upsilon \right) Q_3 \leq  C \chi^{-2C_2-2} \Upsilon Q_3 \leq C\chi^{-2C_2-2}\Upsilon + C\frac{|\nabla v|^2}{v^2} \chi^{-2C_2-2}\Upsilon.
\end{align*}
We also note that within the first term we need the following estimate
\begin{align*}
    2(1-\e) \Upsilon \Real \ip{{\nabla Q_3}, \frac{\oo{\nabla} v}{v}} &= 2(1-\e) \Real \ip{{\nabla (\Upsilon Q_3)}, \frac{\oo{\nabla} v}{v}}\\
    &-2(1-\e) Q_3 \Real \ip{{\nabla \Upsilon}, \frac{\oo{\nabla} v}{v}}\\
    &\leq 2(1-\e) \Real \ip{{\nabla (\Upsilon Q_3)}, \frac{\oo{\nabla} v}{v}}\\
    &+ \left(\frac{C|\nabla \Upsilon|^2}{\Upsilon} + \frac{\e^2 |\nabla v|^2}{v^2} \Upsilon \right) Q_3\\
    &\leq 2(1-\e) \Real \ip{\nabla (\Upsilon Q_3), \frac{\oo{\nabla} v}{v}}\\
    &+ C \chi^{-2C_2-2}\Upsilon + C \frac{|\nabla v|^2}{ v^2} \chi^{-2C_2-2}\Upsilon + \frac{\e^2|\nabla v|^4}{v^3} \Upsilon.
\end{align*}
Therefore, we find that all together,
\begin{align*}
    \parabolic{} \Upsilon Q_3 &\leq C \chi^{-2C_2-2}\Upsilon + C \frac{|\nabla v|^2}{v^2} \chi^{-2C_2-2}\Upsilon -(2\e-2\e^2) \frac{|\nabla v|^4}{v^3} \Upsilon\\
    &+ 2(1-\e) \Real \ip{\nabla(\Upsilon Q_3), \frac{\oo{\nabla} v}{v}} - 2 \Real \ip{\nabla (\Upsilon Q_3), \frac{\oo{\nabla} \Upsilon}{\Upsilon}}\\
    &\leq\ \frac{C \chi^{-4C_2-4}\Upsilon}{v} -(2\e-3\e^2) \frac{|\nabla v|^4}{v^3} \Upsilon\\
    &+ 2(1-\e) \Real \ip{\nabla(\Upsilon Q_3), \frac{\oo{\nabla} v}{v}} - 2 \Real \ip{\nabla (\Upsilon Q_3), \frac{\oo{\nabla} \Upsilon}{\Upsilon}}
\end{align*}
and we choose $\e = \frac{1}{2}$ so that $2\e - 3\e^2 = \frac{1}{4}$.\\
Now let $(x^*,t^*) \in X \times [0,T)$ be a point at which $\Upsilon Q_3$ achieves a maximum. If $x^* \in X \backslash U$ then $(\Upsilon Q_3)(x^*,t^*) = 0$ is bounded. If $x^* \in U$ then $v(x^*,t^*) > 0$ and 
\begin{equation*}
    0 \leq \frac{C\chi^{-4C_2-4}(x^*)\Upsilon(x^*)}{v(x^*,t^*)} - \frac{1}{4} \frac{|\nabla v|^4(x^*,t^*)}{v(x^*,t^*)^3} \Upsilon(x^*)
\end{equation*}
so that
\begin{equation*}
    \frac{|\nabla v|^2}{v}(x^*,t^*) \Upsilon(x^*) \leq C
\end{equation*}
and therefore 
\begin{equation*}
    (\Upsilon Q_3)(x^*,t^*) \leq C.
\end{equation*}
It follows that for all $(x,t) \in X \times [0,T)$ we have
\begin{equation*}
    (\Upsilon Q_3)(x,t) \leq C
\end{equation*}
so that
\begin{equation*}
    \frac{|\nabla v|^2}{v} \leq C\Upsilon^{-1},
\end{equation*}
and the result is immediate.
\end{proof}

\noindent As a consequence, for any compact $K \subseteq X \backslash S$ there exists a constant $C > 0$ such that 
\begin{equation*}
    |\nabla (u-f^*\rho_B)|^2 \leq CE(t)
\end{equation*}
on $K \times [0,T)$.

\begin{proposition}
Assume that the (\ref{KRF}) satisfies (\ref{VFC}). If $\rho_{SPR} = 0$ on $X \backslash S$ or (\ref{SKE}) holds, then for any compact set $K \subseteq X \backslash S$ where $h_K$ is Lipschitz there exist a constant $C = C(n,m,T,\omega_0,f^*\eta,\rho_{SPR},f^*\rho_B',K,C_K,U_X) > 0$ or a constant $C = C(n,m,T,\omega_0,f^*\eta,\rho_{SKE},f^*\rho_B',K,C_K,U_X) > 0$, respectively, such that on $K \times [0,T)$,
\begin{equation*}
    \Delta_{\omega(t)} v \leq C.
\end{equation*}
On the other hand, assume that $f:X \to B$ is a submersion and (\ref{KRF}) satisfies (\ref{VFC}). If $\omega_{SPR} = \omega_{0}$ on $X$ or if (\ref{SKE}) holds, then there exists a $C = C(n,m,T,\omega_0, f^*\eta,\rho_{SPR},f^*\rho_B',C_X) > 0$ or $C = C(n,m,T,\omega_0,f^*\eta,\rho_{SKE},f^*\rho_B',C_X) > 0$, respectively, such that on $X \times [0,T)$ we have
\begin{equation*}
    \Delta_{\omega(t)} v \leq C.
\end{equation*}
\end{proposition}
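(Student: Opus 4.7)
The plan is to combine the gradient estimate $|\nabla v|^2/v \leq C$ from the previous proposition with a maximum principle argument on a modification of $\Delta v$. First, compute the evolution
\begin{equation*}
\parabolic{}\Delta v = \Delta F + \ip{\Ric\omega(t), i\d\dbar v}_{\omega(t)} + \Delta v,
\end{equation*}
where $F = \parabolic{}v = m - (1-e^{-T})\tr_{\omega(t)}f^*\omega_B' - 2Ae^{-t}/(1-e^{-T})$, via the standard commutator $[\d_t,\Delta]h = \ip{\Ric\omega(t), i\d\dbar h}_{\omega(t)} + \Delta h$ for the normalized K\"ahler-Ricci flow. Next, combine the formula $(1-e^{t-T})\Ric\omega(t) = e^{t-T}\omega(t) - f^*\eta - i\d\dbar u$ with the identity $u = 2AE(t) - v + (1-e^{-T})f^*\rho_B'$ and $f^*\eta + (1-e^{-T})i\d\dbar f^*\rho_B' = (1-e^{-T})f^*\omega_B'$ to obtain
\begin{equation*}
(1-e^{t-T})\Ric\omega(t) = e^{t-T}\omega(t) - (1-e^{-T})f^*\omega_B' + i\d\dbar v.
\end{equation*}
Pairing with $i\d\dbar v$ gives a positive term $|i\d\dbar v|^2/(1-e^{t-T})$ with $|i\d\dbar v|^2 \geq (\Delta v)^2/n$, which has the wrong sign to bound $\Delta v$ from above directly.

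To flip that sign, introduce the auxiliary quantity
\begin{equation*}
Q_4 = \Delta v + B\,\frac{|\nabla v|^2}{v} + D\,(1-e^{-T})\tr_{\omega(t)}f^*\omega_B',
\end{equation*}
with $B,D>0$ large, and apply the Bochner-type formula used in the previous proposition so that $\parabolic{}(|\nabla v|^2/v)$ contributes the crucial $-|i\d\dbar v|^2/v$. Since $AE(t) \leq v \leq 3AE(t)$ and $E(t)/(1-e^{t-T}) = e^{-t}/(1-e^{-T})$ is bounded between positive constants, we can pick $B$ large enough that the combined coefficient of $|i\d\dbar v|^2$ is $\leq -c/(1-e^{t-T})$ for some $c>0$. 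The term $\Delta F = -(1-e^{-T})\Delta\tr_{\omega(t)}f^*\omega_B'$ is estimated via $\Delta F = -(1-e^{-T})(\d_t \tr_{\omega(t)}f^*\omega_B' - \parabolic{}\tr_{\omega(t)}f^*\omega_B')$, substituting the Ricci expression once more; this again produces error of order $|i\d\dbar v|$, which is absorbed by Young's inequality into the dominant $-c|i\d\dbar v|^2/(1-e^{t-T})$. The choice of $D$ large is made so that the Schwarz-type inequality $\parabolic{}\tr_{\omega(t)}f^*\omega_B' \leq -C^{-1}|\nabla\tr_{\omega(t)}f^*\omega_B'|^2 + C$ kills the cross gradient term $2\Real\ip{\nabla F, \oo{\nabla} v}/v$ coming from $B\parabolic{}(|\nabla v|^2/v)$, since $\nabla F$ is proportional to $\nabla\tr_{\omega(t)}f^*\omega_B'$.

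Multiplying by the cutoff $\Upsilon = e^{-C_1\chi^{-C_2}}$ as in the previous proposition and collecting terms yields, on $U \times [0,T)$,
\begin{equation*}
\parabolic{}(\Upsilon Q_4) \leq -\frac{\Upsilon(\Delta v)^2}{4n(1-e^{t-T})} + \frac{C\,\Upsilon}{1-e^{t-T}} + C\chi^{-4C_2-4}\Upsilon - 2\Real\ip{\nabla(\Upsilon Q_4), \tfrac{\oo{\nabla}\Upsilon}{\Upsilon}} + 2(1-\e)\Real\ip{\nabla(\Upsilon Q_4), \tfrac{\oo\nabla v}{v}}
\end{equation*}
after the same $\e$-splitting trick used in the gradient estimate. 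Taking a maximum point $(x^*,t^*)$ of $\Upsilon Q_4$ on $X\times [0,T)$: if $x^*\in X\setminus U$ then $\Upsilon Q_4 = 0$ is bounded; otherwise the gradient terms vanish, the dominant negative quadratic forces $(\Delta v)^2(x^*,t^*) \leq C$ after multiplying through by $(1-e^{t^*-T})\Upsilon^{-1}$, and the bounds $|\nabla v|^2/v \leq C$, $\tr_{\omega(t)}f^*\omega_B' \leq C$ immediately yield $(\Upsilon Q_4)(x^*,t^*) \leq C$. Consequently $\Delta v \leq C$ on $K\times [0,T)$. For the submersion case one sets $K = X$, $\Upsilon \equiv 1$, and omits the cutoff manipulations entirely.

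The main obstacle is the delicate choice of constants $B$ and $D$ to simultaneously (i) flip the sign of the $|i\d\dbar v|^2$ coefficient using the strict inequality $v < (1-e^{t-T})$ up to constants, and (ii) absorb the first-order cross term between $\nabla F$ and $\nabla v$ via the gradient-squared of $\tr_{\omega(t)}f^*\omega_B'$ coming from the Schwarz lemma. Once this balancing is in place, the remaining Young-inequality absorptions and cutoff estimates proceed exactly as in the proof of the previous proposition.
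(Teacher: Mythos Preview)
Your proposal is correct and follows the same overall strategy as the paper: combine a Laplacian-type quantity with the gradient quantity $|\nabla v|^2/v$ and the trace $\tr_{\omega(t)}f^*\omega_B'$, multiply by the cutoff $\Upsilon$, and apply the maximum principle so that the dominant $-(\Delta v)^2$ term forces the bound. The essential difference is structural. The paper does \emph{not} work with $\Delta v$ directly; instead it forms
\[
Q_4=\frac{(1-e^{t-T})\bigl(\Delta v-(1-e^{-T})\tr_{\omega(t)}f^*\omega_B'\bigr)}{v},\qquad Q_5=Q_4+3Q_3,
\]
with $Q_3=\frac{|\nabla v|^2}{v}+(1-e^{-T})\tr_{\omega(t)}f^*\omega_B'$. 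Subtracting $(1-e^{-T})\tr_{\omega(t)}f^*\omega_B'$ \emph{before} computing the heat operator makes the awkward term $\Delta F=-(1-e^{-T})\Delta\tr_{\omega(t)}f^*\omega_B'$ cancel exactly against the Laplacian piece of $\parabolic{}\tr_{\omega(t)}f^*\omega_B'$, leaving only $\ip{\Ric,\,-(1-e^{-T})f^*\omega_B'+i\partial\bar\partial v}$; and the division by $v$ together with the prefactor $(1-e^{t-T})$ arranges that the positive $|\nabla\bar\nabla v|^2$ contribution appears with coefficient exactly $1/v$, so that adding the fixed multiple $3Q_3$ flips the sign without any constant-balancing.

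Your route keeps $\Delta v$ undivided, estimates $\Delta F$ separately via $\partial_t\tr-\parabolic{}\tr$, and relies on the explicit comparability $v\sim E(t)\sim(1-e^{t-T})$ to choose $B$ large enough to dominate $|i\partial\bar\partial v|^2/(1-e^{t-T})$ by $-B|i\partial\bar\partial v|^2/v$. This is legitimate, since indeed $E(t)/(1-e^{t-T})=e^{-t}/(1-e^{-T})$ is uniformly pinched, and the resulting error terms are all of order $C/(1-e^{t-T})$ as you claim. The trade-off is that your argument is messier---you must track $\Delta\tr_{\omega(t)}f^*\omega_B'$ by hand, perform an extra Young absorption of $\ip{i\partial\bar\partial v,f^*\omega_B'}/(1-e^{t-T})$, and tune two constants $B,D$---whereas the paper's normalization makes these cancellations automatic and the constant $3$ in $Q_4+3Q_3$ universal.
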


\begin{proof}
Again, we only provide the argument for the first case where $\rho_{SPR} = 0$ on $X \backslash S$ and take $K \subset \subset U \subset \subset K' \subset \subset X \backslash S$. We can calculate the evolution equation of $\Delta v - (1-e^{-T})\tr_{\omega(t)} f^*\omega'_B$ over $X \backslash S \times [0,T)$ to be 
\begin{align*}
    \parabolic{}&(\Delta v - (1-e^{-T})\tr_{\omega(t)} f^*\omega'_B)\\ &= \Delta v - (1-e^{-T})\tr_{\omega(t)}f^*\omega'_B+ \ip{\Ric, -(1-e^{-T})f^* \omega'_B + i\d\dbar v}\\
    &= \Delta v - (1-e^{-T})\tr_{\omega(t)} f^*\omega'_B+\frac{e^{t-T}}{1-e^{t-T}} (\Delta v - (1-e^{-T})\tr_{\omega(t)} f^*\omega'_B )\\
    &+ \frac{1}{1-e^{t-T}}\left(|(1-e^{-T})f^*\omega'_B|^2 + |\nabla \oo{\nabla} v|^2 + 2 \Real \ip{(1-e^{-T})f^*\omega'_B, i\d\dbar v }\right).
\end{align*}
where the last line uses
\begin{equation*}
    (1-e^{t-T}) \Ric = e^{t-T} \omega(t) - (1-e^{-T})f^*\omega'_B + i\d\dbar v.
\end{equation*}
It follows that on $U \times [0,T)$ we have 
\begin{align*}
    \parabolic{} &\left(\frac{(1-e^{t-T})(\Delta v - (1-e^{-T})\tr_{\omega(t)} f^*\omega'_B)}{v}\right)\\
    &= (1-e^{t-T}) \left(\frac{\parabolic{} (\Delta v - (1-e^{-T})\tr_{\omega(t)} f^*\omega'_B)}{v}\right)\\
    &-e^{t-T} \frac{\Delta v - (1-e^{-T}) \tr_{\omega(t)}f^*\omega'_B}{ v}\\
    &- (1-e^{t-T})\left(\frac{\Delta v - (1-e^{-T})\tr_{\omega(t)} f^*\omega'_B}{v} \frac{\parabolic{} v}{v}\right)\\
    &+ 2\Real \ip{\nabla \left(\frac{(1-e^{t-T})(\Delta v - (1-e^{-T})\tr_{\omega(t)} f^*\omega'_B)}{v}\right), \frac{\oo{\nabla} v}{v}}\\
    &= (1-e^{t-T}) \frac{\Delta v - (1-e^{-T})\tr_{\omega(t)} f^*\omega'_B}{v} \left(1+\frac{-m+(1-e^{-T})\tr_{\omega(t)}f^*\omega'_B + \frac{2A e^{-t}}{1-e^{-T}}}{v}\right)\\
    &+\frac{|(1-e^{-T})f^*\omega'_B|^2 + |\nabla \oo{\nabla} v|^2 + 2 \Real \ip{(1-e^{-T})f^*\omega'_B, i\d\dbar v }}{v}\\
    &+ 2\Real \ip{\nabla \left(\frac{(1-e^{t-T})(\Delta v - (1-e^{-T})\tr_{\omega(t)} f^*\omega'_B)}{v}\right), \frac{\oo{\nabla} v}{v}}\\
    &\leq \frac{C}{v} + (1-e^{t-T}) \frac{\Delta v }{v} \left(1+\frac{-m+(1-e^{-T})\tr_{\omega(t)}f^*\omega'_B + \frac{2A e^{-t}}{1-e^{-T}}}{v}\right)+\frac{2|\nabla \oo{\nabla} v|^2}{v}\\
    &+ 2\Real \ip{\nabla \left(\frac{(1-e^{t-T})(\Delta v - (1-e^{-T})\tr_{\omega(t)} f^*\omega'_B)}{v}\right), \frac{\oo{\nabla} v}{v}}
\end{align*}
where we apply the bounds $|f^*\omega_B'|_{\omega(t)}^2 \leq (\tr_{\omega(t)}f^*\omega_B')^2$ and $0 \leq \tr_{\omega(t)} f^*\omega'_B \leq C$, on $K' \times [0,T)$. Recall inequality (\ref{norm_grad_v_proof_inequality}) from the previous Proposition so that over $U \times [0,T)$ we have
\begin{align*}
    \parabolic{}&\left(\frac{|\nabla v|^2}{v} + (1-e^{-T})\tr_{\omega(t)} f^*\omega'_B\right)\\
    &\leq \frac{|\nabla v|^2 - |\nabla \nabla v|^2 - |\nabla \oo{\nabla} v|^2}{v}+ C\frac{|\nabla v|^2}{v^2} + C\\
    &+ 2 \Real \ip{\nabla \left(\frac{|\nabla v|^2}{v}+ (1-e^{-T})\tr_{\omega(t)} f^*\omega'_B \right), \frac{\oo{\nabla} v}{v}},
\end{align*}
so that after setting 
\begin{equation*}
    Q_4 = \frac{(1-e^{t-T})(\Delta v - (1-e^{-T})\tr_{\omega(t)} f^*\omega'_B)}{v}, \quad Q_5 = Q_4 + 3 Q_3,
\end{equation*}
we have
\begin{align*}
    \parabolic{}Q_5 &\leq C + \frac{C}{v}-\frac{|\nabla \oo{\nabla} v|^2}{v} + 2\Real \ip{\nabla Q_5, \frac{\oo{\nabla} v}{v}}\\
    &+(1-e^{t-T}) \frac{\Delta v }{v} \left(1 + \frac{-m+(1-e^{-T})\tr_{\omega(t)}f^*\omega'_B + \frac{2A e^{-t}}{1-e^{-T}}}{v}\right)
\end{align*}
on $U \times [0,T)$. Next, observe that $n|\nabla \oo{\nabla} v|^2 \geq (\Delta v)^2$ so that
\begin{align*}
    \parabolic{} Q_5 &\leq \frac{C}{v} -\frac{\frac{1}{n}(\Delta v)^2}{v} + 2\Real \ip{\nabla Q_5, \frac{\oo{\nabla} v}{v}}\\
    &+ (1-e^{t-T}) \frac{\Delta v }{v} \left(1+\frac{-m+(1-e^{-T})\tr_{\omega(t)}f^*\omega'_B + \frac{2A e^{-t}}{1-e^{-T}}}{v}\right)\\
    &\leq \frac{C}{v} + \frac{(\e-\frac{1}{n})(\Delta v)^2}{v} + 2 \Real \ip{\nabla Q_5, \frac{\oo{\nabla} v}{v}}
\end{align*}
for some $\e > 0$ to be fixed. As before, taking $\Upsilon = e^{-C_1\chi^{-C_2}}$, we find that on $X \times [0,T)$,
\begin{equation*}
    \parabolic{} \Upsilon Q_5 = \Upsilon \parabolic{} Q_5 + \left(\frac{|\nabla \Upsilon|^2}{\Upsilon} - \Delta \Upsilon\right) Q_5 - 2 \Real \ip{\nabla(\Upsilon Q_5), \frac{\oo{\nabla} \Upsilon}{\Upsilon}}.
\end{equation*}
We estimate the middle term by
\begin{equation*}
    \left(\frac{|\nabla \Upsilon|^2}{\Upsilon} - \Delta \Upsilon\right) Q_5 \leq C\chi^{-2C_2-2}\Upsilon |Q_5| \leq C\chi^{-4C_2-4}\Upsilon + \frac{\e(\Delta v)^2}{v}\Upsilon.
\end{equation*}
We also note that within the first term we need the following estimate
\begin{align*}
    2 \Upsilon \Real \ip{\nabla Q_5, \frac{\oo{\nabla}v}{v}} &= 2 \Real \ip{\nabla (\Upsilon Q_5), \frac{\oo{\nabla}v}{v}} - 2Q_5 \Real \ip{\nabla \Upsilon, \frac{\oo{\nabla}v}{v}}\\
    &\leq 2 \Real \ip{\nabla (\Upsilon Q_5), \frac{\oo{\nabla}v}{v}} + \frac{C|Q_5|}{\sqrt{v}} \chi^{-2C_2-2} \Upsilon\\
    &\leq 2 \Real \ip{\nabla(\Upsilon Q_5), \frac{\oo{\nabla}v}{v}} + C\chi^{-4C_2-4}\Upsilon + \frac{\e(\Delta v)^2}{v}\Upsilon.
\end{align*}
All together, we currently have
\begin{align*}
    \parabolic{} \Upsilon Q_5 &\leq C\chi^{-4C_2-4}\Upsilon+ \frac{C}{v}\Upsilon + \frac{(3\e-\frac{1}{n})(\Delta v)^2}{v}\Upsilon\\
    &+ 2 \Real \ip{\nabla(\Upsilon Q_5), \frac{\oo{\nabla}v}{v}}-2 \Real \ip{\nabla(\Upsilon Q_5), \frac{\nabla \Upsilon}{\Upsilon}}.
\end{align*}
Now fix $\e = \frac{1}{6n}$ so that $3\e - \frac{1}{n} = -\frac{1}{2n}$. Assume that $(x^*,t^*) \in X \times [0,T)$ is a point at which $\Upsilon Q_5$ achieves a maximum, if $x^* \in X \backslash U$ then $(\Upsilon Q_5)(x^*,t^*) = 0$ is bounded. Otherwise, $x^* \in U$ and
\begin{equation*}
    0 \leq  C \chi(x^*)^{-4C_2-4}\Upsilon(x^*)+ \frac{C}{v(x^*,t^*)}\Upsilon(x^*) - \frac{\frac{1}{2n}(\Delta v)(x^*,t^*)^2}{v(x^*,t^*)}\Upsilon(x^*)
\end{equation*}
so that
\begin{equation*}
    (\Delta v)(x^*,t^*) \leq C
\end{equation*}
since $0 < v(x^*,t^*) \leq C$, and we see
\begin{equation*}
    (\Upsilon Q_5)(x^*,t^*) \leq C.
\end{equation*}
It follows that for all $(x,t) \in X \times [0,T)$ we have
\begin{equation*}
    (\Upsilon Q_5)(x,t) \leq C
\end{equation*}
so that $\Upsilon Q_4 \leq C$ and thus
\begin{equation*}
    \Delta v \leq \frac{C\Upsilon^{-1}v}{(1-e^{t-T})} + (1-e^{-T})\tr_{\omega(t)} f^*\omega'_B.
\end{equation*}
The result follows.
\end{proof}

\noindent On the other hand, we note that since the scalar curvature is uniformly bounded below,
\begin{equation*}
    -C(1-e^{t-T}) \leq (1-e^{t-T}) R = e^{t-T}n - (1-e^{-T})\tr_{\omega(t)} f^*\omega'_B + \Delta_{\omega(t)} v \leq n + \Delta_{\omega(t)} v
\end{equation*}
so that $\Delta_{\omega(t)} v$ is also uniformly bounded below on $K \times [0,T)$ for any compact set $K \subseteq X \backslash S$. As for the Ricci potential $u$, we see that on $K \times [0,T)$
\begin{equation*}
    |\Delta_{\omega(t)} u| \leq |\Delta_{\omega(t)} v| + (1-e^{-T})|\Delta_{\omega(t)} f^*\rho'_B| \leq C,
\end{equation*}
where we apply the parabolic Schwarz lemma to $\alpha = i\d\dbar (\chi \rho'_B)$ for a suitable cut-off function $\chi$ on $B$. Additionally, one can extend these $C^0$-estimates to the function $|\d_t u|$ since the right hand side of
\begin{equation*}
    \parabolic{}u = -m + \tr_{\omega(t)} f^*\eta,
\end{equation*}
is globally bounded on $X \times [0,T)$.\\

\noindent Type I scalar curvature is now immediate.

\begin{theorem}
Assume that the (\ref{KRF}) satisfies (\ref{VFC}). If $\rho_{SPR} = 0$ on $X \backslash S$ or (\ref{SKE}) holds, then for any compact set $K \subseteq X \backslash S$ where $h_K$ is Lipschitz there exist a constant $C = C(n,m,T,\omega_0,f^*\eta,\rho_{SPR},f^*\rho_B',K,C_K,U_X) > 0$ or a constant $C = C(n,m,T,\omega_0,f^*\eta,\rho_{SKE},f^*\rho_B',K,C_K,U_X) > 0$, respectively, such that on $K \times [0,T)$,
\begin{equation*}
    R_{\omega(t)} \leq \frac{C}{1-e^{t-T}}.
\end{equation*}
On the other hand, assume that $f:X \to B$ is a submersion and (\ref{KRF}) satisfies (\ref{VFC}). If $\omega_{SPR} = \omega_{0}$ on $X$ or if (\ref{SKE}) holds, then there exists a $C = C(n,m,T,\omega_0, f^*\eta,\rho_{SPR},f^*\rho_B',C_X) > 0$ or $C = C(n,m,T,\omega_0,f^*\eta,\rho_{SKE},f^*\rho_B',C_X) > 0$, respectively, such that on $X \times [0,T)$ we have
\begin{equation*}
    R_{\omega(t)} \leq \frac{C}{1-e^{t-T}}.
\end{equation*}
\end{theorem}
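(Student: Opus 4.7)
The plan is to combine the scalar curvature identity with the Laplacian bound on $v$ established in the preceding proposition; essentially everything needed has already been set up, so the proof will consist of a short computation plus citation of the prior estimates.

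First I would invoke the identity
\begin{equation*}
    (1-e^{t-T}) R_{\omega(t)} = e^{t-T} n - (1-e^{-T})\tr_{\omega(t)} f^*\omega'_B + \Delta_{\omega(t)} v,
\end{equation*}
which follows from (\ref{scalar_curvature_formula}) together with the definition $v = 2AE(t) - (u - (1-e^{-T})f^*\rho_B')$ and the fact that $(1-e^{-T}) f^*\omega_B' = f^*\eta + (1-e^{-T}) i\d\dbar f^*\rho_B'$. The first term on the right is trivially bounded by $n$. For the middle term, we apply the parabolic Schwarz lemma (or more precisely Proposition \ref{trace_lemma} pulled back via $f$) to the K\"ahler form $\omega_B'$ on the smooth locus, which gives $\tr_{\omega(t)} f^*\omega_B' \geq 0$ and hence $-(1-e^{-T})\tr_{\omega(t)} f^*\omega_B' \leq 0$.

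Next, in each of the four cases listed, the preceding proposition provides a constant $C$, with the claimed dependencies, such that $\Delta_{\omega(t)} v \leq C$ holds on the relevant set ($K \times [0,T)$ in the local cases and $X \times [0,T)$ in the submersion cases). Combining the three bounds yields
\begin{equation*}
    (1-e^{t-T}) R_{\omega(t)} \leq n + C,
\end{equation*}
and dividing through by $1-e^{t-T}$ produces exactly the stated Type I bound. Since $1-e^{t-T}$ is comparable to $T-t$ uniformly on $[0,T)$ up to constants depending only on $T$, this is genuinely the Type I rate $C/(T-t)$ advertised in the introduction.

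There is essentially no obstacle here: all the analytic work was absorbed into the preceding Laplacian estimate on $v$, whose proof in turn rested on the Lipschitz control of $h_K$ giving $h_K(t) \leq CE(t)$ (Lemma \ref{lipschitz_lemma}) and hence $0 < v \leq CE(t)$, together with the Bochner-type argument for $|\nabla v|^2/v$. The only small point to note is that in the submersion cases one sets the cut-off $\Upsilon \equiv 1$ and $K = X$, which is already how the preceding propositions were formulated, so no additional work is required to pass from the local to the global statement.
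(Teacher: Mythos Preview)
Your proposal is correct and matches the paper's proof essentially verbatim: the paper's argument is the single line $(1-e^{t-T}) R_{\omega(t)} = e^{t-T}n - (1-e^{-T})\tr_{\omega(t)} f^*\omega'_B + \Delta_{\omega(t)} v \leq C$, using exactly the identity and the preceding Laplacian bound you cite. One cosmetic remark: the nonnegativity of $\tr_{\omega(t)} f^*\omega_B'$ is immediate from $f^*\omega_B'$ being semipositive, so there is no need to invoke the parabolic Schwarz lemma or Proposition~\ref{trace_lemma} for that step.
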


\begin{proof}
We have
\begin{equation*}
    (1-e^{t-T}) R_{\omega(t)} = e^{t-T}n - (1-e^{-T})\tr_{\omega(t)} f^*\omega'_B + \Delta_{\omega(t)} v \leq C.
\end{equation*}
\end{proof}

\noindent To end, we make a comparison between the infinite time case and the finite time case. Given that $K_X$ is semi-ample and induces a map into projective space, it was shown by Song and Tian, \cite{ST16}, that on $X \times [0,\infty)$ we have
\begin{equation*}
    R_{\omega(t)} \leq C,
\end{equation*}
while it was shown by Jian, \cite{J20}, that for every compact $K \subseteq X \backslash S$, we have on $K \times [0, \infty)$,
\begin{equation*}
    \lim_{t \to \infty} R_{\omega(t)} \to -m.
\end{equation*}
Returning to our setting, it was shown by Zhang, \cite{Z10}, that on $X \times [0,T)$ we have
\begin{equation*}
    R_{\omega(t)} \leq \frac{C}{(T-t)^2},
\end{equation*}
while we have proven that, under certain additional assumptions, for every compact $K \subseteq X \backslash S$ we have on $K \times [0,T)$ that
\begin{equation*}
    R_{\omega(t)} \leq \frac{C}{T-t}.
\end{equation*}
The method to use the convergence of the Ricci potential to the potentials of the twisted K\"ahler-Einstein metrics in order to find estimates for the Li-Yau type quantities is the common approach between Jian's result and ours. We expect that (\ref{VFC}) holds in general, but it is interesting that we need the extra condition that the fibres generically admit a smoothly varying family of K\"ahler-Einstein metrics. This is a consequence of Theorem \ref{C0_varphi_estimate} so to extend the Type I scalar curvature estimate to the general setting either an improvement on Theorem \ref{C0_varphi_estimate} is needed or, more likely, a new approach. Additionally, for the case of singular fibres we need that the convergence rate of the Ricci potential to the potentials of the twisted K\"ahler-Einstein metrics is Lipschitz, whereas this technicality does not appear in Jian's result thanks to \cite[Lem. 4.1]{J20} which states that any such $h_K(t)$ can be bounded above by a smooth, decreasing function $A(t)$ such that $0 \leq -A(t)' \leq 100A(t)$ if $T = \infty$.

\end{document}